\newcommand{\Z}{\ensuremath{\mathbbm{Z}}}
\newcommand{\R}{\ensuremath{\mathbbm{R}}}
\newcommand{\C}{\ensuremath{\mathbbm{C}}}
\newcommand{\ci}{\ensuremath{\mathrm{i}}}
\renewcommand{\Re}{\operatorname{Re}}
\renewcommand{\Im}{\operatorname{Im}}
\DeclareMathOperator{\tr}{tr}
\DeclareMathOperator{\Span}{span}
\DeclareMathOperator{\adj}{adj}
\newtheorem{theorem}{Theorem}[section]
\newtheorem{proposition}[theorem]{Proposition}
\newtheorem{lemma}[theorem]{Lemma}
\newtheorem{definition}[theorem]{Definition}
\newtheorem{corollary}[theorem]{Corollary}
\newtheorem{example}[theorem]{Example}
\newtheorem{remark}[theorem]{Remark}
\newenvironment{proof}{\par\noindent{\em proof.} 
\normalfont}{\qed\par}
\DeclareRobustCommand{\qed}{%
  \ifmmode 
  \else \leavevmode\unskip\penalty9999 \hbox{}\nobreak\hfill
  \fi
  \quad\hbox{\qedsymbol}}
\newcommand{\openbox}{\leavevmode
  \hbox to.77778em{%
  \hfil\vrule
  \vbox to.675em{\hrule width.6em\vfil\hrule}%
  \vrule\hfil}}
\newcommand{\qedsymbol}{\openbox}
\title{Skew parallelogram nets and universal factorization}
\date{}
\author{Tim Hoffmann
\and Andrew O. Sageman-Furnas
\and Jannik Steinmeier
}
\begin{document}

\maketitle

\begin{abstract}
We obtain many objects of discrete differential geometry as reductions of skew parallelogram nets, a system of lattice equations that may be formulated for any unit associative algebra. The Lax representation is linear in the spectral parameter, and paths in the lattice give rise to polynomial dependencies. We prove that generic polynomials in complex $2\times2$ matrices factorize, implying that skew parallelogram nets encompass all systems with such a polynomial representation.

We demonstrate factorization in the context of discrete curves by constructing pairs of Bäcklund transformations that induce Euclidean motions on discrete elastic rods. More generally, we define a hierarchy of discrete curves by requiring such an invariance after an integer number of Bäcklund transformations.

Moreover, we provide the factorization explicitly for discrete constant curvature surfaces and reveal that they are slices in certain 4D cross-ratio systems. 
Encompassing the discrete DPW method, this interpretation constructs such surfaces from given discrete holomorphic maps.
\end{abstract}

\tableofcontents

\newpage

\section{Introduction}
\label{sec:intro}

We propose an equation system in unit associative algebras which encompasses many geometric structures. For example, all special classes of discrete surfaces from~\cite{bobenko2008discrete}, integrable systems such as the cross-ratio equation or polygon recutting and even motion polynomials in linkage kinematics can be found as special cases or reductions of this system. 
From the perspective of discrete differential geometry we discuss general properties and some applications of this equation system.

We consider maps $p:\{\text{directed edges of }\mathbb{Z}^n\}\to\mathcal{A}$ into a unit associative algebra $\mathcal{A}$ for which the values $p,q,r,s$ at the four edges of every elementary quad fulfill the following two simple equations (Definition \ref{def:parnets})
\begin{align*}
p+q=r+s,\qquad qp=sr.
\end{align*}

We call such maps \emph{skew parallelogram nets} since for the algebra of quaternions this equation system describes parallelograms folded along a diagonal, in other words, skew parallelograms.

These folded parallelograms are central to several variants of polygon dynamics such as the bicycle transformation~\cite{tabachnikov2012discrete}, periodic Darboux transformations \cite{cho2023periodic} or cross-ratio dynamics~\cite{arnold2022cross,affolter2023integrable}. These are based on cross-ratio systems \cite{nijhoff1997some} which we describe as skew parallelogram nets. 
We also find a connection to recent work~\cite{izosimov2023recutting} on polygon recutting~\cite{adler1993recuttings} where the dynamic is described in terms of refactorization of polynomials.

Factorization of such polynomials is extensively studied in the field of linkage kinematics \cite{li2019factorization} where it is used to realize rational motions with closed linkages. This has been studied for Euclidean motions \cite{hegedus2013factorization}, hyperbolic motions \cite{scharler2020quadratic,scharler2021algorithm} and, recently, conformal motions \cite{li2023geometric}.

In the context of discrete differential geometry skew parallelogram nets have been studied in \cite{schief2007chebyshev} for quaternions and have been suggested in~\cite{bobenko2008discrete} for associative algebras. In section \ref{sec:parnets}, we expand on their ideas and provide an evolution, a linear Lax representation and an associated family. We briefly discuss 3D-consistency and Bäcklund transformations. 
We investigate skew parallelogram nets for the algebra $\mathbb{C}^{2\times2}$ and its subalgebra isomorphic to the quaternions. We solve the initial value problem generically using refactorization of polynomials. Through factorization one can also interpret more complex polynomial systems in terms of skew parallelogram nets. Sections \ref{sec:elastica} and \ref{sec:surfaces} can be seen as a demonstration this concept.

Throughout the paper, skew parallelogram nets over quaternions are of special interest since they are known to encompass discrete Lund-Regge systems \cite{schief2007chebyshev}, asymptotic K-nets (also called pseudospherical surfaces)~\cite{discretizationOfSurfacesIntegrable}, constant mean curvature nets \cite{hoffmann16} and, via Bäcklund transformations of curves, a discrete Hashimoto flow \cite{pinkallSmokeRingFlow,hoffmannSmokeRingFlow}.

In section \ref{sec:elastica} we study curves in Euclidean space that stay invariant under a sequence of such Bäcklund transformations. In~\cite{hoffmannSmokeRingFlow} it is shown that curves invariant under two Bäcklund transformation are discrete elastic rods. We apply factorization to prove the converse. Discrete elastic rods are known to be invariant under the smooth Hashimoto flow \cite{lagrangeTop} and, thus, curves invariant under the smooth flow agree with curves invariant under the discrete flow.  Another connection between Bäcklund transformations and elastic rods has recently been observed in \cite{bor2023bicycling}. For more context on (discrete) elastic rods and their various applications see \cite{bergou2008elasticrods} and references therein. 
Moreover, we provide an algorithm for the construction of invariant curves for any number of Bäcklund transformations. This approach might give a viable way towards a discrete version of the Hierarchy of commuting Hamiltonian flows given in \cite{langer1991poisson} which also is studied in space forms \cite{chern2018commuting} or centro-affine space \cite{calini2013integrable}. 

In section \ref{sec:surfaces} we provide a description for cross-ratio systems in terms of skew parallelogram nets and show how they give rise to different types of constant curvature surfaces. In particular, we find asymptotic K-nets as a natural reduction while more complex surfaces like constant mean curvature nets~\cite{discretizationOfSurfacesIntegrable} and circular K-nets~\cite{cKnets} arise as slices in a 4D lattice. We investigate two reductions of 4D cross-ratio systems which correspond to these surfaces and show how to recover them from 2D cross-ratio systems which discretize holomorphic data. This method appears to be a discrete DPW method~\cite{dorfmeister1998weierstrass} similar to the one presented in~\cite{ogata2017construction,discreteDPW}. As an application of this method, we construct a lattice of breather transformations of a surface.

In section \ref{sec:tnets} we conclude the paper with an approach on how to study our system in more general algebras. For the algebra $\mathbb{H}^{2\times2}$ this is already known in the context of isothermic nets and their transformation theory~\cite{bobenko1996discrete,doliwa2007generalized,hertrich2000transformations,burstall2015discrete}. Isothermic nets in arbitrary dimension \cite{schief2001isothermic} and in various geometries such as sphere geometries \cite{isothermicInSphereGeometriesMoutard} are all described by Moutard nets in quadrics. We show how skew parallelogram nets are equivalent to Moutard nets in different ways and how this can be used to define an associated family.

How to give the associated family a geometric interpretation is an interesting open question that we will continue to investigate alongside the above mentioned question about the Hamiltonian flows. Equally intriguing is the prospect of finding cluster and Poisson structures for skew parallelogram nets.\\

\textbf{Acknowledgements.} We would like to thank Alexander Bobenko for his valuable ideas that led to the algorithm for $n$-invariant curves and Niklas Affolter and Gudrun Szewieczek for many fruitful discussions. This research is partially funded by the Deutsche Forschungsgemeinschaft (DFG - German Research Foundation) - Project-ID 195170736 - TRR109 "Discretization in Geometry and Dynamics".

\subsection*{Notation}

Throughout, we consider (subsets of) the $n$-dimensional integer lattice $\mathbb{Z}^n$ as a graph. We identify $\mathbb{Z}^n$ with its vertices $\mathcal{V}(\mathbb{Z}^n)$ and write
$\mathcal{E}(\mathbb{Z}^n)$ for its undirected set of edges. We often consider the set $\vec{ \mathcal{E}}(\mathbb{Z}^n)$ of directed edges $(x,x+e_j)$ where $x \in \mathcal{V}(\mathbb{Z}^n)$ and $e_j\in\mathbb{Z}^n$ is the $j$-th unit vector. 
We investigate vertex based maps $f:\mathbb{Z}^n\to \mathcal{A}$ and edge based maps $p:\vec{ \mathcal{E}}(\mathbb{Z}^n)\to \mathcal{A}$, where $\mathcal{A}$ is some algebra. For edge based maps we write $p=(p^1,\cdots,p^n)$ where $p^j(x):=p(x,x+e_j)$ denotes the value of $p$ at the edge heading from $x\in\mathbb{Z}^n$ in direction $j$. For $n\leq3$ we denote $u:=p^1,v:=p^2,w:=p^3$ for simplicity.

\begin{figure}[h!]
  \centering
  \includegraphics[width=0.17\textwidth]{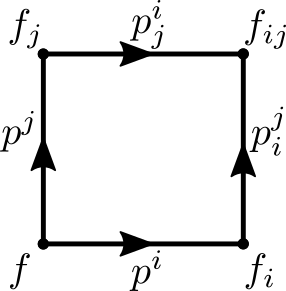}
  \hspace{1.5cm}
  \includegraphics[width=0.23\textwidth]{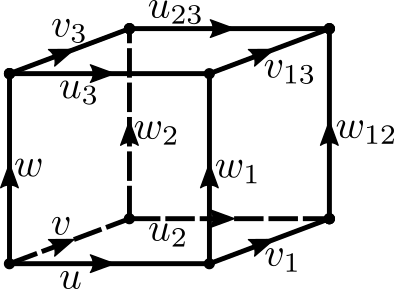}
  \caption{Notation at vertices and edges on a quad and a cube.}
  \label{fig:notation}
\end{figure}

For a vertex based map we use the short notation $f$ for $f(x),x\in\mathbb{Z}^n$ and denote shifts with an index: We write $f_i$ for $f(x+e_i)$ and $f_{\bar{i}}$ for $f(x-e_i)$. Similarly, for an edge based map we write $p^j$ for $p^j(x),x\in\mathbb{Z}^n$ and denote $p^j(x+e_i)$ and $p^j(x-e_i)$ by $p^j_i$ and $p^j_{\bar{i}}$ respectively.
We write $\bm{0}:=(0,\cdots,0)\in\mathbb{Z}^n$.

If $\mathcal{A}$ has a unit we denote it by $1$ and write $\lambda+q$ for $\lambda\cdot 1+q$ for scalars $\lambda$ and $q\in\mathcal{A}$.

\section{Skew parallelogram nets in algebras}
\label{sec:parnets}

\subsection{The skew parallelogram equations and evolution}

Let $\mathcal{A}$ be a unit associative algebra over the field $\mathbb{K}$, with $\mathbb{K}$ being the real or complex numbers.

\begin{definition}
\label{def:parnets}
An \emph{$n$-dimensional skew parallelogram net} is a map $p=(p^1,...,p^n):\vec{ \mathcal{E}}(\mathbb{Z}^n)\to\mathcal{A}$, such that for each quad we have
\begin{align}
\label{eq:pareqadd}
p^{i}+p^{j}_i=p^{j}+p^{i}_j
\end{align}
and
\begin{align}
\label{eq:pareqmult}
p^j_ip^{i}=p^{i}_jp^j.
\end{align}
\end{definition}

For $2$-dimensional skew parallelogram nets the equations read $u+v_1=v+u_2$ and $v_1u=u_2v$. For brevity we refer to $p$ as a \emph{parallelogram net}.

A parallelogram net $p$ defines a vertex based map $f$ by $f_i:=f+p^i$ for a given $f(\bm0)$. We call $f$ the \textit{primitive map} of $p$. Throughout this paper we will relate $f$ to various geometric objects, in particular, to curves and surfaces in $\mathbb{R}^3$. The objects we get depend on the choice of $\mathcal{A}$ and further constraints on the parallelogram net.

\begin{remark}
\label{rem:multPrimitiveMap}
Sometimes, using other primitive maps is useful. For example, in section \ref{sec:tnets}, we will use a vertex based map defined multiplicatively by $f_i :=p^if^{-1}$ for a given $f(\bm0)$.
\end{remark}

One can easily construct a new parallelogram net $\tilde{p}$ from a given solution $p$:
\begin{itemize}
\item By conjugation: $\tilde{p}^j=cp^jc^{-1}$, $c\in\mathcal{A}$ invertible.
\item By scaling: $\tilde{p}^j=sp^j$, $s\in\mathbb{K}\setminus\{0\}$.
\item By adding a scalar: $\tilde{p}^j=p^j+r$, $r\in\mathbb{K}$.
\end{itemize}
If two parallelogram nets can be obtained from each other by these operations, we call them \textit{primarily equivalent}. Both describe the same solution of \eqref{eq:pareqadd},\eqref{eq:pareqmult}, but they might carry different algebraic information.

We call a parallelogram net \textit{evolvable} if its diagonals $p^j-p^i$ for $i\neq j$ are invertible.
\begin{proposition}
An evolvable parallelogram net $p$ satisfies the evolution
\begin{align}
\label{eq:parevol}
p^i_j=(p^j-p^i)p^i(p^j-p^i)^{-1}.
\end{align}
\end{proposition}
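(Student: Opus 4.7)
The plan is to combine the two defining equations \eqref{eq:pareqadd} and \eqref{eq:pareqmult} to isolate $p^i_j$. First I would solve the additive equation for the ``opposite'' edge $p^j_i = p^j + p^i_j - p^i$ and substitute this into the multiplicative equation $p^j_i p^i = p^i_j p^j$.

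Expanding the left-hand side yields $p^j p^i + p^i_j p^i - (p^i)^2 = p^i_j p^j$, and rearranging the terms containing $p^i_j$ to one side gives
\begin{align*}
p^j p^i - (p^i)^2 = p^i_j (p^j - p^i),
\end{align*}
which factors on the left as $(p^j - p^i) p^i = p^i_j (p^j - p^i)$.

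At this point the evolvability hypothesis enters: since $p^j - p^i$ is assumed invertible, I can multiply on the right by $(p^j - p^i)^{-1}$ to obtain the claimed conjugation formula $p^i_j = (p^j - p^i) p^i (p^j - p^i)^{-1}$.

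I do not expect a real obstacle here: the argument is a short algebraic manipulation that only uses associativity and the unit in $\mathcal{A}$, and the only place where a nontrivial hypothesis is needed is the final inversion of the diagonal $p^j - p^i$, which is precisely what ``evolvable'' provides. One small thing worth noting in the write-up is that the same computation, carried out symmetrically in $i$ and $j$, also gives $p^j_i = (p^j - p^i) p^j (p^j - p^i)^{-1}$ (or equivalently can be derived from \eqref{eq:parevol} via \eqref{eq:pareqadd}), so the evolution indeed propagates both edge values across the quad.
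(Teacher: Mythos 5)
Your argument is correct and is exactly the paper's proof, just written out in more detail: substituting the additive relation into the multiplicative one yields $(p^j-p^i)p^i=p^i_j(p^j-p^i)$, and evolvability supplies the inverse needed to conjugate. No gaps.
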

\begin{proof}
Plugging \eqref{eq:pareqadd} into \eqref{eq:pareqmult} yields $(p^{j}-p^{i})p^{i}=p^{i}_j(p^j-p^i)$.
\end{proof}

Hence, from its values on the axes of $\mathbb{Z}^n$ we can reconstruct $p$ on the positive quadrant. Similarly, we can compute the negative quadrant from
\begin{align}
\label{eq:parevolbw}
p^i=(p^j_i-p^i_j)^{-1}p^i_j(p^j_i-p^i_j).
\end{align}
However, to reconstruct all quadrants from the values on the axes we need special properties of the the algebra $\mathcal{A}$ as will be explained in section \ref{sec:refac}.

In the simplest case, where $\mathcal{A}$ is commutative, the evolution becomes the identity. We will only consider non-commutative algebras.

The evolution \eqref{eq:parevol} has many invariants. For example, if $\mathcal{A}$ is a matrix algebra, the matrices $p^j$ and $p^j_i$ are similar, implying that their characteristic polynomials coincide. In the case where $\mathcal{A}$ is the algebra of quaternions, the invariants are the real part and the length. In all cases, every invariant $\alpha$ is an edge based map which has the \emph{labelling property} meaning that $\alpha^i$ only depends on the $i$-th coordinate. Equivalently, this means that we have $\alpha^i_j=\alpha^i$ for $i\neq j$.

\begin{remark}
The parallelogram equations and evolution have been studied before. For the algebra of quaternions they appear as discrete flows on curves in \cite{pinkallSmokeRingFlow,hoffmannSmokeRingFlow} and as discrete Lund-Regge system in \cite{schief2007chebyshev}. In particular, in \cite{schief2007chebyshev} we find a detailed discussion on consistency and permutability and also an associated family. In unit associative algebras the system is suggested in \cite[exercise 6.19]{bobenko2008discrete}, where we find a proof that the evolution \eqref{eq:parevol} is 3D consistent. Thus, generically, we can obtain a parallelogram net from any initial values on the coordinate axes.
\end{remark}

\subsection{Multidimensional consistency and Bäcklund transformations}

We will briefly touch the topic of consistency and permutability of parallelogram nets (again, see \cite{schief2007chebyshev,bobenko2008discrete}). We verify that the equations stay preserved throughout a cube.

\begin{proposition}
Consider invertible values $u,v,w\in\mathcal{A}$ on the edges of a cube as in Figure \ref{fig:notation}. If the parallelogram equations \eqref{eq:pareqadd},\eqref{eq:pareqmult} are fulfilled on five faces of a cube, then they are also fulfilled on the sixth face.
\end{proposition}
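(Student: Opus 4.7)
The plan is to encode the two parallelogram equations on a face as one linear zero-curvature relation, and then run a path-counting argument around the cube. Introduce the Lax-type polynomial $L^{i}(\lambda):=1+\lambda\, p^{i}$ with a scalar spectral parameter $\lambda$. Expanding
\[
L^{j}_{i}\, L^{i} \;=\; 1+\lambda\,(p^{i}+p^{j}_{i})+\lambda^{2}\,p^{j}_{i}\, p^{i}
\]
and comparing with the analogous expression for $L^{i}_{j}\, L^{j}$ shows that the identity $L^{j}_{i}\, L^{i}=L^{i}_{j}\, L^{j}$ in $\mathcal{A}[\lambda]$ is equivalent to both \eqref{eq:pareqadd} and \eqref{eq:pareqmult} on that face (matching the coefficients of $\lambda$ and $\lambda^{2}$ respectively).

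Next I would consider the six monotone lattice paths from $\bm{0}$ to $\bm{1}+\bm{2}+\bm{3}$ in the cube; each produces a \emph{transport} that is a product of three Lax polynomials in the order opposite to traversal. Two such paths differ by a single adjacent transposition iff they share exactly one face of the cube, and the corresponding face relation $L^{j}_{i}\, L^{i}=L^{i}_{j}\, L^{j}$ is precisely the equality of the two transports after cancelling the common third Lax factor. A direct inspection confirms that these six identifications arrange the six transports into a single $6$-cycle, each vertex being linked to exactly two others by face equations.

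Assuming now that the parallelogram equations hold on five of the faces, the corresponding five edges of the $6$-cycle still span all six vertices, so all six transports coincide as polynomials in $\lambda$. In particular the two transports adjacent to the remaining face are equal, which reads $P(\lambda)\,L^{k}(\lambda)=Q(\lambda)\,L^{k}(\lambda)$ (or the symmetric left-sided variant, depending on which face is missing). Since $L^{k}(\lambda)=1+\lambda\, p^{k}$ has constant term equal to the unit of $\mathcal{A}$, a recursive comparison of coefficients forces $P=Q$ in $\mathcal{A}[\lambda]$, and reading off the coefficients of $\lambda$ and $\lambda^{2}$ delivers \eqref{eq:pareqadd} and \eqref{eq:pareqmult} on the sixth face. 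The one point that genuinely needs checking is the $6$-cycle structure on paths versus faces; the final polynomial cancellation is purely formal and, in particular, does not require invertibility of any $p^{i}$.
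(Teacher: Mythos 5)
Your argument is correct, and it takes a genuinely different route from the paper. The paper's proof is a short direct computation: it derives the additive relation on the sixth face by telescoping the additive relations of the other five, and derives the multiplicative relation by inserting $ww^{-1}$ and pushing $w$ through the five multiplicative relations --- so it explicitly uses the invertibility of the edge values assumed in the statement. You instead encode each face as the zero-curvature identity $L^{j}_{i}L^{i}=L^{i}_{j}L^{j}$ for the linear Lax polynomials (exactly the equivalence the paper records separately in its Lax-representation proposition) and then argue on the Cayley hexagon of $S_3$: the six monotone paths through the cube form a $6$-cycle under adjacent transpositions, and the edge labels exhaust the six faces exactly once --- I checked this correspondence, which you defer to ``direct inspection,'' and it does hold (the three transpositions in the first two steps give the three faces at $\bm{0}$, the three in the last two steps give the three faces at the opposite corner). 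Five face relations give five transport equalities spanning all six paths, and the sixth face relation follows by cancelling the common factor $L^{k}=1+\lambda p^{k}$, which is legitimate by recursive coefficient comparison because its constant term is the unit. What each approach buys: the paper's computation is shorter and entirely self-contained; yours eliminates the invertibility hypothesis altogether and scales verbatim to reorderings of longer products, tying the consistency statement directly to the refactorization picture of Section~\ref{sec:refac}, where paths through an $n$-cube correspond to factorizations of a degree-$n$ polynomial. If you write this up, do spell out the hexagon--face correspondence explicitly rather than leaving it to inspection; it is the only nontrivial combinatorial input.
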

\begin{proof}
We can obtain the equations on the top face from the equations on the other faces:
\begin{align*}
v_3+u_{23}&=(-w+v+w_2)+(-w_2+u_2+w_{12})\\
&=(-w+u+w_1)+(-w_1+v_1+w_{12})=u_2+v_{12}
\end{align*}
and
\begin{align*}
u_{23}v_3=u_{23}v_3ww^{-1}=u_{23}w_2vw^{-1}=w_{12}u_2vw^{-1}=w_{12}v_1uw^{-1}=v_{12}w_1uw^{-1}=v_{12}u_3.
\end{align*}
Other faces work similarly.
\end{proof}

In integrable discrete differential geometry a two-layered $(n+1)$-dimensional net is seen as two $n$-dimensional nets related by their natural transformation (e.g.~\cite{bobenko2008discrete} and references therein).

\begin{definition}
Two $n$-dimensional parallelogram nets $p$ and $\tilde{p}$ are related by a \emph{Bäcklund transformation} 
if there exists a map $v:\mathbb{Z}^n\to\mathcal{A}$, such that at every edge in $\vec{ \mathcal{E}}(\mathbb{Z}^n)$ we have
\begin{align*}
p^i+v_i=v+\tilde{p}^{i} \qquad\text{and}\qquad v_ip^{i}=\tilde{p}^{i}v.
\end{align*}
\end{definition}

The nets $p$ and $\tilde p$ should be considered to live on different copies of the integer graph $(\mathcal{V}(\mathbb{Z}^n),\vec{ \mathcal{E}}(\mathbb{Z}^n))$. The map $v$ should then be considered to live on the directed edge $(x,\tilde x)$ where $x$ and $\tilde x$ are corresponding points of these two copies. In this way a Bäcklund transformation indeed just becomes a two-layered $(n+1)$-dimensional parallelogram net. 
We will study Bäcklund transformations of curves in section~\ref{sec:elastica}.

\subsection{Lax representation and associated family}

A key property of parallelogram nets is that they possess a linear Lax representation.

\begin{definition}[Lax representation]
To an edge based map $p:\vec{ \mathcal{E}}(\mathbb{Z}^n)\to\mathcal{A}$ we assign $P:\vec{ \mathcal{E}}(\mathbb{Z}^n)\times\mathbb{K}^2\to\mathcal{A}$ defined by
\begin{align}
\label{eq:laxpair}
P^i(\lambda,\mu):=\lambda+\mu p^i.
\end{align}
\end{definition}

If $p$ is a parallelogram net the map $P(\lambda,\mu)$ is primarily equivalent to $p$ for any fixed $\lambda,\mu$ (with $\mu\neq0$). As a function of $\lambda,\mu$ the map $P$ is a Lax representation for parallelogram nets:

\begin{proposition}
An edge based net $p$ is a parallelogram net if and only if the Lax representation \eqref{eq:laxpair} fulfills the compatibility condition $P^j_iP^i=P^i_jP^j$.
\end{proposition}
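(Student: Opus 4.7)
The plan is direct: expand the product $P^j_i P^i$ (and its swapped counterpart) as a polynomial in the two scalar indeterminates $\lambda,\mu$, and use that scalars from $\mathbb{K}$ commute with everything in $\mathcal{A}$ to collect coefficients.

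First I would compute
\begin{align*}
P^j_i P^i &= (\lambda + \mu p^j_i)(\lambda + \mu p^i) = \lambda^2 + \lambda\mu\, p^i + \lambda\mu\, p^j_i + \mu^2\, p^j_i p^i,\\
P^i_j P^j &= (\lambda + \mu p^i_j)(\lambda + \mu p^j) = \lambda^2 + \lambda\mu\, p^j + \lambda\mu\, p^i_j + \mu^2\, p^i_j p^j.
\end{align*}
The scalar $\lambda^2$ terms agree automatically. Subtracting, the compatibility $P^j_i P^i = P^i_j P^j$ becomes
\begin{align*}
\lambda\mu\,\bigl(p^i+p^j_i-p^j-p^i_j\bigr) + \mu^2\,\bigl(p^j_i p^i - p^i_j p^j\bigr) = 0.
\end{align*}

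If the parallelogram equations \eqref{eq:pareqadd} and \eqref{eq:pareqmult} hold, both bracketed expressions vanish, so compatibility follows for every $\lambda,\mu$. Conversely, suppose compatibility holds as an identity in $\lambda$ and $\mu$. Since $\mathbb{K}$ is infinite (it is $\mathbb{R}$ or $\mathbb{C}$), we may read the left-hand side as a polynomial in $\mathcal{A}[\lambda,\mu]$ that vanishes for all scalar values; equivalently, we equate coefficients of the monomials $\lambda\mu$ and $\mu^2$. The $\lambda\mu$-coefficient gives the additive equation \eqref{eq:pareqadd}, and the $\mu^2$-coefficient gives the multiplicative equation \eqref{eq:pareqmult}.

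There is no real obstacle here; the only thing to be careful about is that $\lambda,\mu\in\mathbb{K}$ lie in the center of $\mathcal{A}$, which is what allows the expansion and the clean coefficient extraction. (If one prefers to avoid the polynomial argument, it suffices to specialize to two independent choices of $(\lambda,\mu)$, say $(1,1)$ together with $(0,1)$, to isolate the two equations separately.)
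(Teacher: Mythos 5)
Your proof is correct and follows essentially the same route as the paper: expand both products and match coefficients of $\lambda\mu$ and $\mu^2$, which is exactly what the paper does. The extra care you take with the converse (justifying coefficient extraction via infinitude of $\mathbb{K}$, or alternatively by specializing to $(\lambda,\mu)=(0,1)$ and $(1,1)$) is a harmless and slightly more explicit version of the paper's ``for all $\lambda,\mu$'' step.
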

\begin{proof}
Simply expand
\begin{align*}
P^j_iP^i&=(\lambda+\mu p^j_i)(\lambda+\mu p^i)=\lambda^2+\lambda\mu(p^i+p^j_i)+\mu^2 p^j_ip^i,\\
P^i_jP^j&=(\lambda+\mu p^i_j)(\lambda+\mu p^j)=\lambda^2+\lambda\mu(p^j+p^i_j)+\mu^2 p^i_jp^j.
\end{align*}
Hence, $P^j_iP^i=P^i_jP^j$ for all $\lambda,\mu\in\mathbb{K}$ is equivalent to the parallelogram equations \eqref{eq:pareqadd} and \eqref{eq:pareqmult}.
\end{proof}

This also works with single variable Lax representations like $\lambda+p^i,1+\mu p^i$ or $\lambda+\frac1\lambda p^i$. These representations are all equivalent and it will turn out useful to switch between them. Often, we will treat $\lambda$ and $\mu$ as differentiable functions of a parameter $t\in\mathbb{R}$. The variables $\lambda,\mu,P^i$ and $\Phi$ (defined below) then depend on this parameter. Usually, this will not be stated explicitly. For the functions $\lambda,\mu$ we assume $\lambda'\mu-\lambda\mu'\neq0$ for all $t\in\mathbb{R}$.

\begin{definition}
Given an initial $\Phi(\bm0,t)\in\mathcal{A}$ the vertex based \emph{moving frame} $\Phi:\mathbb{Z}^n\times\mathbb{R}\to\mathcal{A}$ of a parallelogram net $p$ is given by $\Phi_i=P^i\Phi$. If $\Phi$ is invertible the family of edge based nets $p_t$ given by
\begin{align}
p^i_t=\Phi_i^{-1}(P^i)'\Phi
\label{eq:assoedge}
\end{align}
is called the \emph{associated family} of $p$.
\end{definition}

Here, $(P^i)'=\lambda'+\mu'p^i$ denotes the derivative with respect to $t$. Note, that for any fixed $t$ the edge based nets $P$ defined by $P^i=\lambda+\mu p^i$ and $P'$ defined by $(P^i)'=\lambda'+\mu'p^i$ are parallelogram nets primarily equivalent to $p$.

\begin{proposition}
\label{thm:assofamily1}
Every net $p_t$ in the associated family of a parallelogram net $p$ is a parallelogram net. A primitive map $f_t$ for $p_t$ is given by $f_t=\Phi^{-1}\Phi'$.
\end{proposition}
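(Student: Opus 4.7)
The plan is to verify the two parallelogram equations for $p_t$ and then the primitive-map identity directly, using only the Lax compatibility $P^j_i P^i = P^i_j P^j$, the definition $\Phi_i = P^i\Phi$ of the moving frame, and the already-noted fact that $P'$ itself is a parallelogram net primarily equivalent to $p$.

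First, for the additive equation, I would rewrite each term of $p^i_t + (p^j_t)_i$ to share the common outer factors $\Phi_{ij}^{-1}$ on the left and $\Phi$ on the right. Since $\Phi_i^{-1} = \Phi_{ij}^{-1} P^j_i$ and $\Phi_i = P^i\Phi$, one finds
\begin{align*}
p^i_t + (p^j_t)_i &= \Phi_{ij}^{-1}\bigl[P^j_i (P^i)' + (P^j_i)' P^i\bigr]\Phi = \Phi_{ij}^{-1}\bigl(P^j_i P^i\bigr)'\Phi.
\end{align*}
The analogous computation gives $p^j_t + (p^i_t)_j = \Phi_{ij}^{-1}(P^i_j P^j)'\Phi$. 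By the Lax compatibility $P^j_i P^i = P^i_j P^j$ (which holds for all $t$), the two expressions coincide.

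Next, for the multiplicative equation, the telescoping is even more transparent:
\begin{align*}
(p^j_t)_i\, p^i_t = \Phi_{ij}^{-1}(P^j_i)'\Phi_i\,\Phi_i^{-1}(P^i)'\Phi = \Phi_{ij}^{-1}(P^j_i)'(P^i)'\Phi,
\end{align*}
and similarly $(p^i_t)_j\, p^j_t = \Phi_{ij}^{-1}(P^i_j)'(P^j)'\Phi$. As noted just before the proposition, the edge map $(P^i)' = \lambda' + \mu' p^i$ is itself a parallelogram net, so $(P^j_i)'(P^i)' = (P^i_j)'(P^j)'$, and the two sides agree.

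Finally, for the primitive-map claim, I would compute $(f_t)_i - f_t = \Phi_i^{-1}\Phi_i' - \Phi^{-1}\Phi'$. Differentiating $\Phi_i = P^i\Phi$ in $t$ gives $\Phi_i' = (P^i)'\Phi + P^i\Phi'$, and then $\Phi_i^{-1}\Phi_i' = \Phi_i^{-1}(P^i)'\Phi + \Phi^{-1}\Phi'$, from which $(f_t)_i - f_t = \Phi_i^{-1}(P^i)'\Phi = p^i_t$ immediately. There is no real obstacle in this proposition — everything reduces to a clean telescoping once one recognizes that differentiating the Lax compatibility in $t$ provides exactly what is needed for the additive equation, while the multiplicative equation is inherited from the parallelogram property of $P'$.
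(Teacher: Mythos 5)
Your proof is correct, and the multiplicative equation and the primitive-map identity are handled exactly as in the paper. The one place you diverge is the additive equation: the paper first establishes that $f_t=\Phi^{-1}\Phi'$ is a primitive map for $p_t$ and then observes that the mere existence of a primitive map forces $p^i_t+(p^j_t)_i=(f_t)_{ij}-f_t=p^j_t+(p^i_t)_j$, whereas you verify it directly by pulling both terms to the common frame $\Phi_{ij}^{-1}(\cdot)\Phi$ and recognizing the bracket as the $t$-derivative of the compatibility condition $P^j_iP^i=P^i_jP^j$. Both arguments are valid; the paper's is shorter since it reuses the primitive-map computation, while yours makes explicit the pleasant fact that the additive equation is literally the derivative of the Lax compatibility, mirroring how the multiplicative equation comes from the parallelogram property of $P'$.
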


Taking the logarithmic derivative to obtain a family of surfaces from its Lax representation is known as the Sym formula \cite{sym2005soliton}.
\begin{proof}
Consider the vertex based net $f_t=\Phi^{-1}\Phi'$. It is the primitive map of $p_t$, since
\begin{align*}
(f_t)_i-f_t&=\Phi^{-1}(P^i)^{-1}(P^i\Phi'+(P^i)'\Phi)-\Phi^{-1}\Phi'=\Phi_i^{-1}(P^i)'\Phi=p^i_t.
\end{align*}
The existence of this primitive map already implies the additive condition $p^i_t+(p^j_t)_i=p^j_t+(p^i_t)_j$. The multiplicative condition $(p^j_t)_ip_t^i=(p^i_t)_jp_t^j$ holds since
\begin{align*}
(p^j_t)_ip_t^i&=\Phi_{ij}^{-1}(P^j)_i'\Phi_i\Phi_i^{-1}(P^i)'\Phi=\Phi_{ij}^{-1}(P^j)_i'(P^i)'\Phi \quad\text{and}\\
(p^i_t)_jp_t^j&=\Phi_{ij}^{-1}(P^i)_j'\Phi_j\Phi_j^{-1}(P^j)'\Phi=\Phi_{ij}^{-1}(P^i)_j'(P^j)'\Phi.
\end{align*}
We see that these coincide using \eqref{eq:pareqmult} for the map $P'$ which is parallelogram net for any $t$. Hence, $p_t$ is a parallelogram net.
\end{proof}

Since we have the functional freedom of choosing $\lambda$ and $\mu$ it might appear as if we can get a lot of different nets from this construction. However, we can show that, up to primary equivalence, each net $p_{t_0}$ is already determined by the ratio $\frac{\lambda(t_0)}{\mu(t_0)}$:

\begin{proposition}
\label{thm:assofamily2}
For arbitrary functions $\lambda,\mu$ consider $p_{t_0}$ in the associated family of a parallelogram net $p$. If $\mu(t_0)=0$ the net $p_{t_0}$ is primarily equivalent to $p$. If $\mu(t_0)\neq0$ the net $p_{t_0}$ is primarily equivalent to $\tilde p_r$ where $r=\frac{\lambda(t_0)}{\mu(t_0)}$ and $\tilde p_t$ is the associated family defined by $\tilde\lambda(t)=t,\tilde\mu(t)=1$ of $p$.
\end{proposition}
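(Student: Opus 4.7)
The plan is to express $P^i(t_0)$ and $(P^i)'(t_0)$ in terms of $\tilde P^i(r) = r+p^i$ and a scalar, then relate the two moving frames $\Phi$ and $\tilde\Phi$ up to a position-dependent scalar and a constant. Throughout, we use that the three operations (conjugation by a constant, scalar multiplication, and addition of a scalar) that define primary equivalence commute in a controlled way with the Lax construction.

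First consider the degenerate case $\mu(t_0)=0$. The hypothesis $\lambda'\mu-\lambda\mu'\neq 0$ forces $\lambda(t_0)\neq 0$, and we have $P^i(t_0)=\lambda(t_0)$, so $\Phi_i(t_0)=\lambda(t_0)\Phi(t_0)$ and hence $\Phi(x,t_0)=\lambda(t_0)^{|x|}\Phi(\bm 0,t_0)$, where $|x|=\sum_j x_j$. Substituting into \eqref{eq:assoedge} with $(P^i)'(t_0)=\lambda'(t_0)+\mu'(t_0)p^i$, the scalar prefactors cancel in $\Phi_i^{-1}\Phi$, and one finds that $p^i_{t_0}$ equals $\tfrac{\lambda'(t_0)}{\lambda(t_0)}+\tfrac{\mu'(t_0)}{\lambda(t_0)}\,\Phi(\bm 0,t_0)^{-1}p^i\Phi(\bm 0,t_0)$, which is primarily equivalent to $p$.

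For $\mu(t_0)\neq 0$ set $r=\lambda(t_0)/\mu(t_0)$ and $\delta=\lambda'(t_0)\mu(t_0)-\mu'(t_0)\lambda(t_0)\neq 0$. The key algebraic identities, obtained by direct computation, are
\begin{align*}
P^i(t_0)=\mu(t_0)\,\tilde P^i(r),\qquad
(P^i)'(t_0)=\frac{\mu'(t_0)}{\mu(t_0)}\,P^i(t_0)+\frac{\delta}{\mu(t_0)}.
\end{align*}
The first identity shows that $\tilde\Phi(x,r):=\mu(t_0)^{-|x|}\Phi(x,t_0)c$ satisfies $\tilde\Phi_i=\tilde P^i(r)\tilde\Phi$ for any constant $c\in\mathcal{A}^\times$, so it is a moving frame for the normalized family $\tilde p_t$ at $t=r$, giving $\tilde p^i_r=\tilde\Phi_i(r)^{-1}\tilde\Phi(r)$. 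Substituting the second identity into the definition \eqref{eq:assoedge} the first term collapses (since $\Phi_i^{-1}P^i\Phi=1$) and the second term produces $\Phi_i^{-1}\Phi$, which by the relation between $\Phi$ and $\tilde\Phi$ differs from $\tilde\Phi_i^{-1}\tilde\Phi$ only by the scalar factor $1/\mu(t_0)$ and conjugation by $c$. Collecting:
\begin{align*}
p^i_{t_0}=\frac{\mu'(t_0)}{\mu(t_0)}+\frac{\delta}{\mu(t_0)^2}\,c^{-1}\tilde p^i_r\,c,
\end{align*}
which exhibits $p_{t_0}$ as a scaling, scalar shift, and conjugation of $\tilde p_r$, hence primarily equivalent to it.

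The main obstacle is bookkeeping: one has to separate the two distinct scalar rescalings (the $\mu(t_0)$ factor that relates $P^i(t_0)$ to $\tilde P^i(r)$, and the $\delta/\mu(t_0)$ factor coming from decomposing $(P^i)'(t_0)$) and verify that the freedom in choosing $\tilde\Phi(\bm 0,r)$ exactly absorbs the remaining constant $c$, so that the identification is an honest primary equivalence rather than merely holding up to a cocycle along the lattice.
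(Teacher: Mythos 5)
Your proposal is correct and follows essentially the same route as the paper: identify $P^i(t_0)=\mu(t_0)\tilde P^i(r)$, deduce that the two frames differ by the scalar $\mu(t_0)^{|x|}$ (and a constant), and read off $p^i_{t_0}=\tfrac{\mu'}{\mu}+\tfrac{\delta}{\mu^2}\,\tilde p^i_r$ up to conjugation, which matches the paper's coefficient $\tfrac{\lambda'-r\mu'}{\mu}$. The only differences are cosmetic bookkeeping (you keep a general initial frame $c$ where the paper normalizes $\Phi(\bm 0,t)=1$, and you note explicitly that $\lambda'\mu-\lambda\mu'\neq 0$ forces $\lambda(t_0)\neq 0$ in the degenerate case).
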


Thus, for $\mathbb{K}=\mathbb{R}$ the associated family is a one-parameter family and different choices of $\lambda$ and $\mu$ correspond to different parametrizations of (subsets of) this family. For $\mathbb{K}=\mathbb{C}$ the associated family depends on the path of the ratio $\frac{\lambda}{\mu}$ in the complex plane and in Section \ref{sec:surfaces} this will be the real line or the unit circle.

\begin{proof}
We assume $\Phi(\bm0,t)=\tilde\Phi(\bm0,t)=1$ for for simplicity.

First, consider $t_0$ with $\mu(t_0)=0$. The nets $p$ and $p_{t_0}$ are primarily equivalent, since $P^i(t_0)=\lambda(t_0)$ implies $\Phi(x,t_0)=\lambda(t_0)^{x_1+\cdots+x_n}\in\mathbb{K}$ for $x=(x_1,\cdots,x_n)\in\mathbb{Z}^n$ and, hence, at $t=t_0$ we have
\begin{align*}
p^i_{t_0}=\Phi_i^{-1}(P^i)'\Phi=\frac{\lambda'}{\lambda}+\frac{\mu'}{\lambda}p^i.
\end{align*}

Now, consider $t_0$ with $\mu(t_0)\neq 0$. 
Note, that we have $\tilde{P}^i(t)=t+p^i$ and $(\tilde{P}^i)'(t)=1$. 
Then, we have $P^i(t_0)=\mu(t_0)(\frac{\lambda(t_0)}{\mu(t_0)}+p^i)=\mu(t_0)(r+p^i)=\mu(t_0)\tilde P^i(r)$ and, hence, $\Phi(x,t_0)=\mu(t_0)^{x_0+\cdots+x_n}\tilde\Phi(x,r)$. Now, $p_{t_0}$ and $\tilde p_{r}$ are primarily equivalent, since
\begin{align*}
p^i_{t_0}=\Phi_i^{-1}(P^i)'\Phi=\frac1\mu\tilde\Phi_i^{-1}(\lambda'+\mu'p^i)\tilde\Phi
=\frac1\mu\tilde\Phi_i^{-1}(\mu'\tilde P^i+\lambda'-r\mu')\tilde\Phi=\frac{\mu'}{\mu}+\frac{\lambda'-r\mu'}{\mu}\tilde p^i_{r}.
\end{align*}
Here, $\lambda,\mu,P,\Phi$ are evaluated at $t_0$ and $\tilde P,\tilde\Phi$ are evaluated at $r$.
\end{proof}

We conclude the discussion on general associative algebras by arguing that, generically, any linear Lax representation is equivalent to \eqref{eq:laxpair} under a gauge. For this consider a vertex based frame $\Psi$ transported along each edge by $\Psi_i=(\lambda A^i+\mu B^i)\Psi$. A gauge is a transformation $\Psi\to\Phi=G\Psi$ for a vertex based $G$ and in this case we define $G:=\Psi(\lambda=1,\mu=0)^{-1}$. Then, $G_i=G(A^i)^{-1}$ and the new frame $\Phi$ fulfils
\begin{align*}
\Phi_i=G_i\Psi_i=G(A^i)^{-1}(\lambda A^i+\mu B^i)G^{-1}\Phi=(\lambda+\mu p^i)\Phi,
\end{align*}
where $p^i:=G(A^i)^{-1}B^iG^{-1}$ defines a parallelogram net. The associated family \eqref{eq:assoedge} is not affected by this gauge since $G$ does not depend on $t$.

\begin{remark}
For example, the Lax representation for indefinite affine sphere in \cite{discreteAffineSpheres} is linear in $\lambda$ (with $\mu=1$). One can gauge it into \eqref{eq:laxpair} after a reparametrization $\lambda\to\lambda+1$ which is necessary for $A^i$ to be invertible.
\end{remark}

\subsection{Special algebras: \texorpdfstring{$\mathbb{C}^{2\times2}$}{Complex matrices} and the quaternions}
\label{sec:algebras}

To understand the role of parallelogram nets in discrete differential geometry it is useful to look at specific associative algebras $\mathcal{A}$. If $\mathcal{A}$ is commutative, e.g.,  $\mathcal{A}=\mathbb{K}$, the evolution is the identity and the quadrilaterals become actual planar parallelograms. Simple non-commutative cases are the complex matrix algebra $\mathbb{C}^{2\times2}$ and its subalgebra isomorphic to the quaternions $\mathbb{H}$. Most geometric structures investigated in this paper can be described within these algebras.

The invariants of the parallelogram evolution \eqref{eq:parevol} in $\mathbb{C}^{2\times2}$ are trace and determinant: If the net is evolvable they have the labelling property, i.e., for $i\neq j$ we have
\begin{align*}
	\tr p^i=\tr p^i_j,\qquad \det p^i=\det p^i_j.
\end{align*}

Conditions on the invariants can lead to interesting special cases:

\begin{definition}
	We call a parallelogram net $p$ \emph{equally folded} if $\det p^i=1$ for all edges. We call it \emph{zero-folded} if $\tr p^i=0$ for all edges.
\end{definition}

We will motivate this terminology when we turn to the subalgebra of quaternions. Note, that an evolvable zero-folded quad is contained in a two-dimensional subspace since its diagonals are linearly dependent:
\begin{align*}
p^i+p^j_i=(p^i(p^j-p^i)+(p^j-p^i)p^j)(p^j-p^i)^{-1}=\frac{\det p^i-\det p^j}{\det(p^j-p^i)}(p^i-p^j).
\end{align*}

\begin{remark}
Although this is not obvious, one can consider zero-folded to be a limit case of equally folded: To each quad of a parallelogram net $p$ we assign the number $r:=\frac{\det p^j-\det p^i}{\tr p^i-\tr p^j}$ which is the unique solution of $\det(r+p^i)=\det(r+p^j)$. Now, as a generalization of equally folded, assume that $r$ is a global constant. Then, $s:=\sqrt{\det (r+p^i)}$ also is constant and $p$ is primarily equivalent to the equally folded net defined by $\tilde p^i=\frac1s(r+p^i)$. The limit case of global $r=\infty$ appears if $\tr p^i$ is constant. Then, $p$ is primarily equivalent to the zero-folded net defined by $\tilde p^i=-\frac12\tr p^i+p^i$.
\end{remark}

Equally folded nets come in families:
\begin{theorem}
	\label{thm:zerofoldedrep}
	Using the functions $\lambda(t)=e^t,\mu(t)=e^{-t}$, a net in the associated family of a zero-folded net is equally folded. Every equally folded net can be represented this way.
\end{theorem}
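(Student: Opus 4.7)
The plan is to handle the forward and converse directions in turn, both by direct calculation with the $2\times 2$ determinant identity $\det(\lambda I + \mu M) = \lambda^2 + \lambda\mu \tr M + \mu^2 \det M$.

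For the forward direction, observe that $\Phi_i = P^i\Phi$ gives $p^i_t = \Phi^{-1}(P^i)^{-1}(P^i)'\Phi$, so $\det p^i_t = \det((P^i)')/\det(P^i)$. The zero-folded assumption $\tr p^i = 0$ reduces numerator and denominator to $(\lambda')^2 + (\mu')^2 \det p^i$ and $\lambda^2 + \mu^2 \det p^i$ respectively. The choice $\lambda(t) = e^t$, $\mu(t) = e^{-t}$ produces $\lambda' = \lambda$ and $\mu' = -\mu$, so the two expressions agree and $\det p^i_t = 1$, showing $p_t$ is equally folded.

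For the converse, given an equally folded net $q$, I would produce a zero-folded preimage by applying the same associated family construction to $q$ itself at $t = 0$. Let $\Psi$ denote the moving frame of $q$ at $t = 0$ (so $\Psi_i = (1+q^i)\Psi$) and define $p^i := \Psi^{-1}(1+q^i)^{-1}(1-q^i)\Psi$. Since $\det q^i = 1$, the adjugate formula gives $(1+q^i)^{-1}(1-q^i) = (\tau^i - 2q^i)/(2+\tau^i)$ with $\tau^i := \tr q^i$, which is trace-free. Hence $p$ is a parallelogram net (by Proposition \ref{thm:assofamily1} applied to $q$) and satisfies $\tr p^i = 0$, i.e., $p$ is zero-folded.

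It remains to identify $q$ with $p_0$ in the associated family of $p$ up to primary equivalence. Writing $p^i = \Psi^{-1}M^i\Psi$ with $M^i := (\tau^i - 2q^i)/(2+\tau^i)$ and using the identity $(q^i)^{-1} = \tau^i - q^i$ (valid for $\det q^i = 1$), a short calculation yields $(1+M^i)^{-1}(1-M^i) = q^i$. Therefore $p^i_0 = \Phi^{-1}\Psi^{-1}q^i\Psi\Phi$, where $\Phi$ is the moving frame of $p$ at $t = 0$. The main obstacle is that this conjugation is a priori vertex-dependent; I would resolve it by tracking the product $\Psi\Phi$. From $(1+q^i)(1 + (q^i)^{-1}) = 2 + \tau^i$ one derives $(1+q^i)(1+M^i) = 2$, whence $(\Psi\Phi)_i = 2(\Psi\Phi)$. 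Therefore $(\Psi\Phi)(x) = 2^{x_1 + \cdots + x_n}(\Psi\Phi)(\bm{0})$, and the scalar factor cancels in the conjugation, leaving $p^i_0 = (\Psi\Phi)(\bm{0})^{-1} q^i (\Psi\Phi)(\bm{0})$, a global conjugation of $q$ that exhibits primary equivalence.
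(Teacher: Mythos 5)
Your proof is correct and follows essentially the same route as the paper: the forward direction is the identical determinant computation, and your zero-folded net $p^i=\Psi^{-1}(1+q^i)^{-1}(1-q^i)\Psi$ coincides with the paper's $\hat p:=p_{t=1}-1$ built from $\lambda=t^2,\mu=1$, since $(1+q^i)^{-1}(1-q^i)=2(1+q^i)^{-1}-1=(\tau^i-2q^i)/(2+\tau^i)$. The only difference is that you explicitly carry out the recovery step $p_0=q$ (up to a global conjugation by $(\Psi\Phi)(\bm 0)$, which disappears for the initial choice $\Phi(\bm 0)=\Psi(\bm 0)^{-1}$), a verification the paper compresses into ``one can calculate $\hat p_{t=0}=p$''.
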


According to theorem \ref{thm:assofamily2}, other choices of $\lambda$ and $\mu$ yield parallelogram nets that are primarily equivalent to equally folded nets.

\begin{proof}
	If $p$ is zero-folded, we can simply calculate
	\begin{align*}
		\det p^i_t=\det\Phi^{-1}(P^i)^{-1}(P^i)'\Phi=\frac{\det (e^t-e^{-t}p^i)}{\det (e^t+e^{-t}p^i)}=\frac{e^{2t}+e^{-2t}\det p^i}{e^{2t}+e^{-2t}\det p^i}=1.
	\end{align*}
	
	If $p$ is equally folded, we can find a zero-folded net $\hat{p}$ in its associated family: Consider $p_{t=1}$ in the associated family obtained from $\lambda(t)=t^2,\mu(t)=1$. Then, $\hat{p}:=p_{t=1}-1$ can be shown to be zero-folded and - with $\lambda=e^t,\mu=e^{-t}$ - one can calculate $\hat{p}_{t=0}=p$.
\end{proof}

We now study parallelogram nets in the quaternions. The quaternions $\mathbb{H}$ are the elements of the real division algebra generated by $1,\bm i,\bm{j},\bm k$ which fulfill $\bm i^2=\bm j^2=\bm k^2=\bm{ijk}=-1$. They can be represented as the real subalgebra of the complex matrices generated by
\begin{align*}
1\cong\begin{pmatrix}1 & 0 \\ 0 & 1\end{pmatrix},\quad
\bm i\cong\begin{pmatrix}0&\ci\\\ci&0\end{pmatrix},\quad
\bm j\cong\begin{pmatrix}0&1\\-1&0\end{pmatrix},\quad
\bm k\cong\begin{pmatrix}\ci&0\\0&-\ci\end{pmatrix}.
\end{align*}

For a quaternion $q\in\mathbb{H}$, we denote its real and imaginary parts by
\begin{align*}
r_q:=\Re(q)=\frac{1}{2}\tr q, \qquad \vec{q}:=\Im(q).
\end{align*}
We call an edge based net $p$ \emph{quaternionic} if every edge variable $p^i$ is a quaternion. Euclidean three-space can be identified with imaginary quaternions $\Im(\mathbb{H})\cong\mathbb{R}^3$. If $f:\mathbb{Z}^n\to\mathbb{H}$ is a vertex based net we will view $\vec f:\mathbb{Z}^n\to \Im(\mathbb{H})$ as a net in Euclidean space.

The invariants of a quaternionic parallelogram net are the real part $r_{p^i}$ and norm $\|p^i\|=\sqrt{\det p^i}$. Therefore, the length of each edge of a quad in Euclidean space $\|\vec p^i\|$ is also an invariant. Hence, these Euclidean quads are Chebyshev quads, which are quads where opposite edges have equal length. In particular, if $r_{p^i}=r_{p^j}$ the corresponding quad is an \emph{anti-parallelogram}, which is a planar Chebyshev quad with parallel diagonals. 

\begin{remark}
	The only non-evolvable solution is $p^i=p^j$ and $p^i_j=p^j_i$. Here, real part and length are not necessarily invariant and this case will not be considered in this paper.
\end{remark}

\begin{remark}
\label{rem:foldingparam}
Quaternionic equally folded nets are given by \cite[equations (3.16),(3.17)]{schief2007chebyshev} as special discrete Lund-Regge surfaces. For a geometric interpretation, the \emph{folding parameter} of a single skew parallelogram in $\R^3$ is given in~\cite{hoffmannSmokeRingFlow,hoffmann16} as
\begin{align*}
	\sigma:=\frac{\sin\delta^i_j}{\|\vec{p^i}\|}=\frac{\sin\delta^j_i}{\|\vec{p^j}\|},
\end{align*}
where $\delta^i_j$ is the dihedral angle at the edge $p^i$ of the tetrahedron spanned by $\vec f,\vec f_i,\vec f_j,\vec f_{ij}$. Equally folded nets can be calculated to have $|\sigma|=1$ on every quad, motivating our terminology.
\end{remark}

As the quaternions are a real algebra, the spectral parameters $\lambda$ and $\mu$ will, usually, be real numbers. Then, the Lax representation \eqref{eq:laxpair} and associated family are quaternionic. However, since the quaternions are a subset of $\mathbb{C}^{2\times2}$ the complex Lax representation also is well-defined. For example, in section \ref{sec:elastica} we will consider $P^j(\ci,1)=\ci+p^j$ (here, $\ci$ is the imaginary unit) for a quaternionic parallelogram net.

We can study how the geometry evolves in the (real) associated family. The Euclidean direction of an edge is given by $\frac{\vec p^i}{\|\vec p^i\|}$.

\begin{proposition}
\label{lem:vertexstars}
At each point the associated family of a quaternionic parallelogram net acts as a rotation on the set of Euclidean directions incident to the point.
\end{proposition}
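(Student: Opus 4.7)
The plan is to show that at each vertex $x$, both the forward and backward edges of $p_t$ incident to $x$ are obtained from the corresponding edges of $p$ by the same quaternionic conjugation $q\mapsto\Phi(x,t)^{-1}q\Phi(x,t)$, and then to invoke the standard fact that conjugation by a nonzero quaternion acts as a rotation on $\Im(\mathbb{H})\cong\mathbb{R}^3$.

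First I would rewrite each incident edge of $p_t$ at $x$ so that both conjugating factors involve only $\Phi(x)$. For the outgoing edge $(x,x+e_i)$, substituting $\Phi_i=P^i\Phi$ into \eqref{eq:assoedge} gives
\[
p^i_t(x)=\Phi(x)^{-1}(P^i)^{-1}(P^i)'\,\Phi(x).
\]
For the incoming edge $(x-e_i,x)$, write $\tilde P:=P^i(x-e_i)$ and use $\Phi(x-e_i)=\tilde P^{-1}\Phi(x)$ to rearrange \eqref{eq:assoedge} into
\[
p^i_t(x-e_i)=\Phi(x)^{-1}\,\tilde P'\,\tilde P^{-1}\Phi(x).
\]
In both cases the conjugating quaternion at $x$ is the single element $\Phi(x,t)$, independent of $i$ and of edge orientation.

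Second I would extract the imaginary part of $(P^i)^{-1}(P^i)'$. With $\lambda,\mu$ real and $(P^i)^{-1}=\overline{P^i}/\|P^i\|^2$, expanding $\overline{P^i}(P^i)'$ and using $q+\overline{q}=2r_q$ together with $q\overline{q}=\|q\|^2$ yields
\[
\Im\bigl((P^i)^{-1}(P^i)'\bigr)=\frac{\lambda\mu'-\mu\lambda'}{\|P^i\|^2}\,\vec{p^i}.
\]
The parallel expansion of $\tilde P'\tilde P^{-1}$ produces the same expression, because reversing the order of the product only affects scalar contributions. Consequently $\vec{p^i_t}(x)=\tfrac{\lambda\mu'-\mu\lambda'}{\|P^i\|^2}\,\Phi(x)^{-1}\vec{p^i}(x)\,\Phi(x)$, and the analogous identity holds for incoming edges.

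Finally, conjugation by a nonzero quaternion preserves the splitting into real and imaginary parts, is norm preserving on $\mathbb{H}$, and hence restricts to an element of $SO(3)$ on $\Im(\mathbb{H})$. Normalising the incident edge vectors at $x$ gives
\[
\frac{\vec{p^i_t}(x)}{\|\vec{p^i_t}(x)\|}=\mathrm{sign}(\lambda\mu'-\mu\lambda')\,\Phi(x)^{-1}\frac{\vec{p^i}(x)}{\|\vec{p^i}(x)\|}\Phi(x),
\]
and likewise for each incoming direction at $x$, so the same orthogonal map is applied to every incident direction. By the standing assumption $\lambda'\mu-\lambda\mu'\ne 0$ the scalar prefactor has a consistent sign, and for the usual orientation it is positive, yielding a genuine rotation. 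The main obstacle is bookkeeping for incoming edges — verifying that the conjugator is $\Phi(x)$ rather than $\Phi(x-e_i)$, and that $P'P^{-1}$ and $P^{-1}P'$ share the same imaginary part — after which the result reduces to this elementary quaternionic expansion.
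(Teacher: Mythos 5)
Your proof is correct and follows essentially the same route as the paper's: you show that both the outgoing and incoming edges at $x$ are conjugates by the single quaternion $\Phi(x,t)$ of elements of $\Span(1,p^i)$ whose imaginary parts are scalar multiples of $\vec{p^i}$, and then use that conjugation acts as a rotation on $\Im(\mathbb{H})$. Where the paper simply observes that $\Span(1,p^i)$ is closed under multiplication so the logarithmic derivative does not change the direction, you compute $\Im\bigl((P^i)^{-1}(P^i)'\bigr)$ explicitly, which also makes the uniform sign $\mathrm{sign}(\lambda\mu'-\mu\lambda')$ visible — a detail the paper glosses over.
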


In particular, all angles at a point stay the same.

\begin{proof}
From \eqref{eq:assoedge} we know
\begin{align*}
p^i_t=\Phi^{-1}(P^i)^{-1}(P^i)'\Phi,\qquad (p^i_t)_{\bar{i}}=\Phi^{-1}(P^i)_{\bar{i}}^{-1}(P^i)'_{\bar{i}}\Phi.
\end{align*}
Taking the logarithmic derivative of $P^i$ does not affect its direction, since $\Span( 1,p^i)\subseteq \mathbb{H}$ is closed under multiplication. All directions are then determined by the sandwich product with $\Phi$ which acts as a rotation in $\mathbb{R}^3$.
\end{proof}

For example, if the directions incident to a point are coplanar the net is said to have \emph{planar vertex stars}. Parallelogram nets with this property turn out to be K-nets (see section \ref{sec:knets}) and Proposition \ref{lem:vertexstars} implies that K-nets stay K-nets in the associated family (for example, see Fig. \ref{fig:amsler}).
	
\subsection{Polynomials and (re)factorization}
\label{sec:refac}

Equations \eqref{eq:parevol} and \eqref{eq:parevolbw} give us a `forward' and `backward' evolution for parallelogram nets. 
But how about a `sidewards' evolution: Can we find $p^i$ and $p^j_i$ from $p^j$ and $p^i_j$? One formulation of this question is if we can find different factorizations of the polynomial
\begin{align}
\label{eq:quadpoly}
\mathcal{P}=(1+\mu p^i_j)(1+\mu p^j)=(1+\mu p^j_i)(1+\mu p^i).
\end{align}

\begin{figure}[h!]
  \centering
  \includegraphics[width=0.19\textwidth]{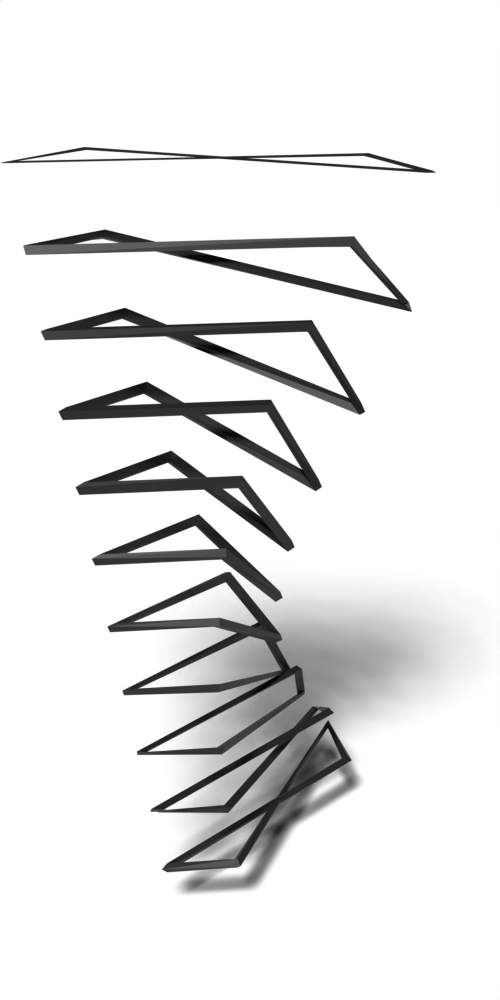}
  \hspace{12mm}
  \includegraphics[width=0.23\textwidth]{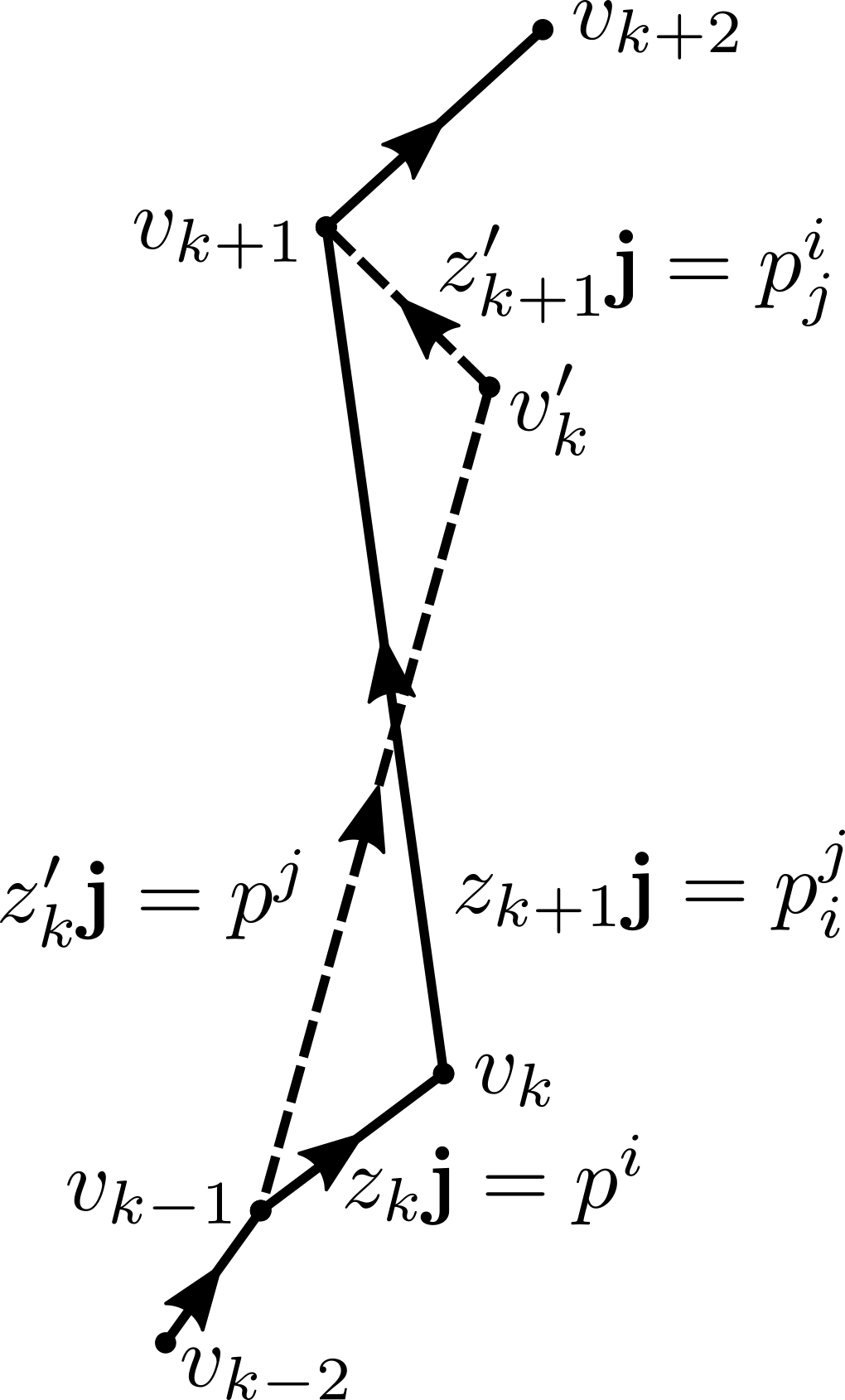}
  \hspace{10mm}
  \includegraphics[width=0.26\textwidth]{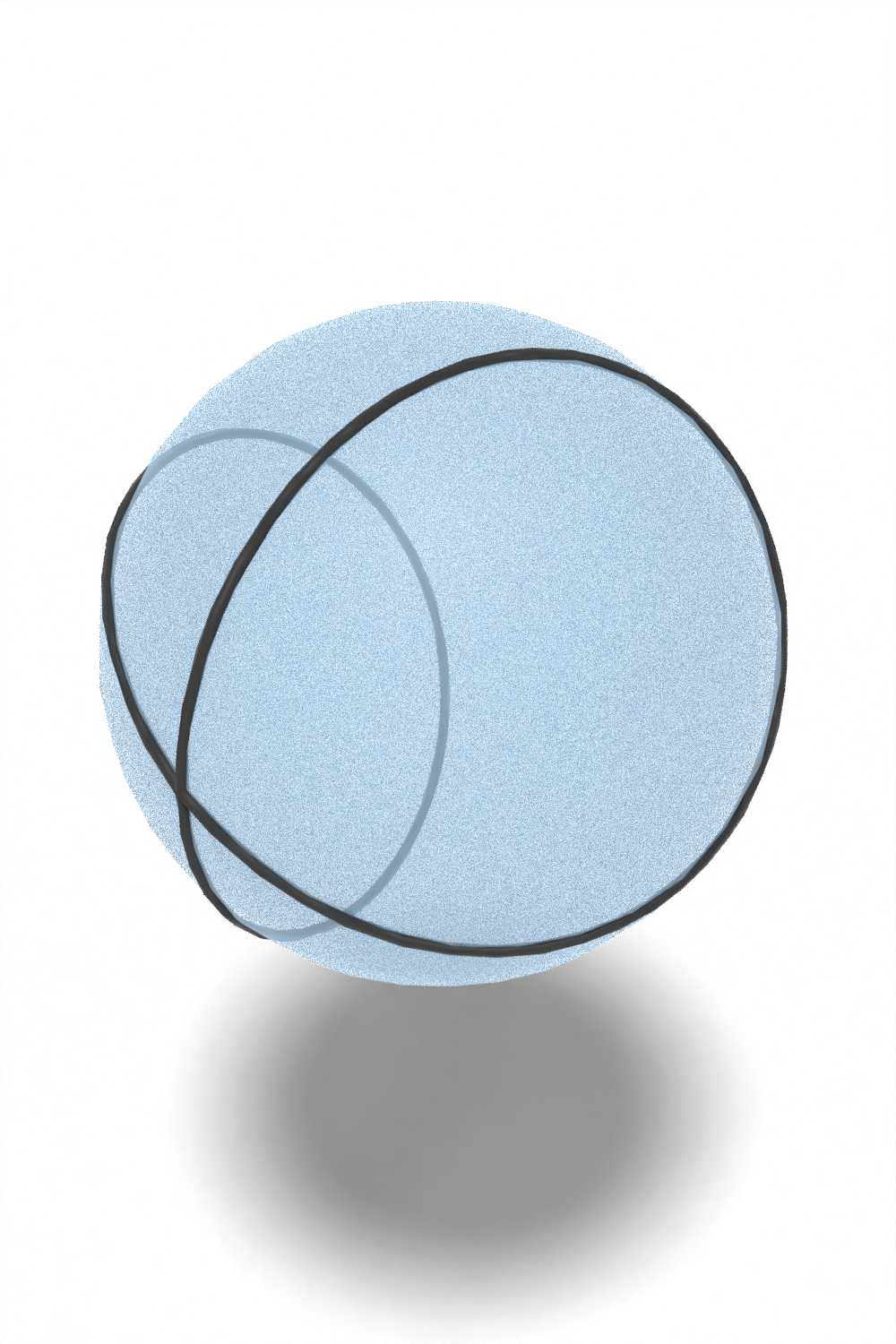}
  \caption{Three geometries from one solution of \eqref{eq:quadpoly}. Left: An anti-parallelogram with associated family. Middle: Recutting of a polygon. Right: Trace of the motion of a point.
  }
  \label{fig:threegeometries}
\end{figure}

Our focus is on how parallelogram nets, their evolvability, and factorizations like \eqref{eq:quadpoly}, relate to curves and surfaces. Before proceeding, we remark that these properties are also relevant in discrete dynamical systems and linkage kinematics. In Figure~\ref{fig:threegeometries} we showcase three different interpretations of \eqref{eq:quadpoly}.

\begin{remark}
Polygon recutting at a vertex $v_k$ of a sequence $(v_i\in\mathbb{C})_{i\in\mathbb{Z}}$ can be expressed in terms of its edges $z_k=v_k-v_{k-1}$ by (see~\cite[equation (19)]{izosimov2023recutting})
\begin{align*}
z_k'+z_{k+1}'&=z_k+z_{k+1},\\
z_k'\bar z_{k+1}'&=z_k\bar z_{k+1}.
\end{align*}
If we identify $\mathbb{C}$ with $\Span( 1,\bm i)\subseteq\mathbb{H}$ this is equivalent to
\begin{align*}
(1+\mu z_{k+1}'\bm j)(1+\mu z_k'\bm j)=(1+\mu z_{k+1}\bm j)(1+\mu z_k \bm j).
\end{align*}
After a conjugation, which reverses the order, and with $t=-\mu$ this equation agrees with~\cite[equation (9)]{izosimov2023recutting}. Its also agrees with \eqref{eq:quadpoly} if one defines $p^i=z_k\bm j,...$ as in Figure~\ref{fig:threegeometries} Middle.

With more involved combinatorics, one can express polygon recutting in terms of parallelogram nets even globally. For a finite sequence $v_0,\cdots,v_n$ consider an $n$-dimensional cube in a parallelogram net such that
\begin{align}
\label{eq:recutztop}
p^k(e_1+\cdots+e_{k-1})=z_k\bm j.
\end{align}
Recutting of the sequence then corresponds to taking different paths through the n-dimensional cube in the parallelogram net.

Now, for an infinite sequence $(v_i\in\mathbb{C})_{i\in\mathbb{Z}}$ closed up to translation we have $z_{k+n}=z_{k}$. Then, consider an $n$-dimension parallelogram net with \eqref{eq:recutztop} and the periodicity $p^k(x+e_1+\cdots e_n)=p^k(x)$. Any curve obtained by recutting $(v_k)$ can be found on a path in the parallelogram net.
\end{remark}

\begin{remark}
Polynomials like \eqref{eq:quadpoly} describe motions of points in euclidean space by $\mu\mapsto \mathcal{P}(\mu) q \mathcal{P}^{-1}(\mu)$ for $q\in \Im(\mathbb{H})$ and can, thus, be called \emph{motion polynomials}. For $\mathcal{P}(\mu)\in\mathbb{H}$ the motion is restricted to a sphere as visualized for one point in Figure~\ref{fig:threegeometries} Right. More generally, in~\cite{hegedus2013factorization} motion polynomials have coefficients in the dual quaternions which also allow for general Euclidean displacements. Linkages with rotational joints can be described using factorization since each linear factor describes one such joint. Two different factorizations for the same motion as in \eqref{eq:quadpoly} imply the existence of two linkages. This allows for the construction of a closed linkage that realizes this motion. In fact, any path through an $n$-dimensional cube in a parallelogram net corresponds to a linkage that realizes the same motion.
\end{remark}

We will investigate this factorization problem in a more general setting. To each sequence of consecutive edges $e(1),...,e(n)\in\vec{ \mathcal{E}}(\mathbb{Z}^n)$ we can assign the product of Lax matrices $\mathcal{P}=(1+\mu p(e(n)))\cdots(1+\mu p(e(1)))$, which is a polynomial of degree $n$ in the spectral parameter $\mu$. A natural question is if such polynomials can be factorized back into Lax matrices of a parallelogram net and if, hence, polynomial Lax representations can be found to be reductions of the parallelogram system.

Factorization of such polynomials has originally been studied for quaternions in~\cite{gordon1965zeros,niven1941equations} (see also~\cite{izosimov2023recutting}). Similar factorization results exist for dual quaternions~\cite{hegedus2013factorization}, split quaternions~\cite{scharler2020quadratic,scharler2021algorithm} and, recently, conformal geometric algebra~\cite{li2023geometric}. Some universal results can be found in~\cite{li2019factorization}. We will state a result for complex matrices and show how to obtain the well-known result for the subalgebra of quaternions. In particular, we give an explicit formula for the factor.

Given some polynomials over the complex numbers $a,b,c,d\in\mathbb{C}[\mu]$ we call $\mathcal{P}:=\begin{pmatrix}a & b \\ c & d\end{pmatrix}$ a \textit{matrix polynomial}. It can also be written as $\mathcal{P}=\sum_{i=0}^n \mu^iC_i$ with $C_i\in\mathbb{C}^{2\times2}$. Note, that the indeterminate $\mu$ is a scalar and, as such, commutes with the coefficients. Still, if one prescribes whether to multiply from the right or from the left one can evaluate the polynomial at matrices. For instance, a matrix $u\in\mathbb{C}^{2\times2}$ is called \emph{right root} (or \emph{right zero}) of $\mathcal{P}$ if $\sum_{i=0}^n C_i u^i=0$. 
If $\mathcal{P},\mathcal{R}$ are two matrix polynomials then $\mathcal{R}$ is called \emph{right factor} of $\mathcal{P}$ if there exists a third polynomial $\mathcal{Q}$ such that $\mathcal{P}=\mathcal{Q}\mathcal{R}$. A known result for polynomials over rings (see, e.g., \cite{li2019factorization}) is that a linear factor $\mu-u$ is a right factor if and only if $u$ is a right root. For invertible $u$ this is equivalent to $1+\mu\hat u$ being a right factor for $\hat u=-u^{-1}$. Left roots and factors can be defined analogously but we will stick to factorization from the right. 
In the following, we will use the adjugate matrix:
\begin{align*}
\adj\begin{pmatrix} a & b \\ c & d \end{pmatrix}=\begin{pmatrix} d & -b \\ -c & a \end{pmatrix}
\end{align*}
For a preliminary computation consider a matrix polynomial $\mathcal{P}$ with a right root $u\in\mathbb{C}^{2\times2}$ such that $\mathcal{P}=\mathcal{Q}(\mu-u)$. Then,
\begin{align*}
\mathcal{P}\adj(\mu-u)=\mathcal{Q}(\mu-u)\adj(\mu-u)=\mathcal{Q}\det (\mu-u).
\end{align*}
The eigenvalues $\mu_1,\mu_2\in\mathbb{C}$ of $u$ are the roots of $\det (\mu-u)$ and we find
\begin{align}
\label{eq:factorev}
(\mu_i-u)\adj\mathcal{P}(\mu_i)=0,\qquad i=1,2.
\end{align}
Thus, non-vanishing columns of $\adj\mathcal{P}(\mu_i)$ are eigenvectors of $u$. Each matrix yields at most one eigenvector (up to scaling) since $\det\mathcal{P}(\mu_i)=\det\mathcal{Q}(\mu_i)\det(\mu_i-u)=0$. 
If taking a column of each matrix determines a basis of eigenvectors $Y$ we can write $u$ in its diagonalized form
\begin{align}
\label{eq:diagzero}
u=Y\begin{pmatrix}\mu_1 & 0 \\ 0 & \mu_2\end{pmatrix}Y^{-1}
\end{align}
We formulate the existence of such a basis for a general matrix polynomial $\mathcal{P}$:

\begin{definition}
Let $\mathcal{P}$ be a matrix polynomial. We call two roots $\mu_1,\mu_2\in\mathbb{C}$ of $\det \mathcal{P}$ \emph{independent} if $\ker \mathcal{P}(\mu_1)\cap\ker \mathcal{P}(\mu_2)=\{(0,0)\}$. 
\end{definition}

For two independent roots we find linear independent columns $y_1$ and $y_2$ of $\adj\mathcal{P}(\mu_1)$ and $\adj\mathcal{P}(\mu_2)$ respectively and the matrix $Y=\begin{pmatrix}y_1 & y_2\end{pmatrix}$ is invertible.

\begin{remark}
Note, that if $\mu_1=\mu_2$ the pair is not independent and a corresponding right root of $\mathcal{P}$ can not be reconstructed using \eqref{eq:diagzero}. This includes all non-diagonalizable right roots. On the other hand, if such a right root is diagonalizable then it is a multiple of the identity and we have $\mathcal{P}(\mu_1)=\mathcal{P}(\mu_2)=0$. Thus, the scalar $\mu_1=\mu_2$ is not only a root of $\det\mathcal{P}$ but also of $\mathcal{P}$ itself. In particular, if $\mu_i$ is a scalar root of $\mathcal{P}$ it is not independent to any other root $\mu_j$ of $\det\mathcal{P}$.
\end{remark}

\begin{theorem}
\label{thm:factthm}
Let $\mathcal{P}$ be a matrix polynomial with two independent roots $\mu_1,\mu_2\in\mathbb{C}$ of $\det \mathcal{P}$. Then, $\mathcal{P}$ has a unique right factor of the form $\mu-u$ such that $\mu_1,\mu_2$ are the roots of $\det (\mu-u)$. For a corresponding basis $Y$ it is given by \eqref{eq:diagzero}.
\end{theorem}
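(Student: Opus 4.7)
My plan is to construct the right factor directly via the announced formula $u := Y\operatorname{diag}(\mu_1,\mu_2)Y^{-1}$, verify it is a right root, and then invoke the preliminary identity \eqref{eq:factorev} already recalled in the excerpt to pin down uniqueness. Two genuine technical points carry the proof: well-definedness of $Y$ and the right-root property of $u$; everything else is bookkeeping.

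For well-definedness of $Y$, I would start from the identity $\mathcal{P}(\mu_i)\adj\mathcal{P}(\mu_i)=\det\mathcal{P}(\mu_i)\,I=0$, which forces every column of $\adj\mathcal{P}(\mu_i)$ into $\ker\mathcal{P}(\mu_i)$. The independence hypothesis rules out the degenerate case $\mathcal{P}(\mu_i)=0$ (otherwise $\ker\mathcal{P}(\mu_i)=\C^2$ would meet the other non-trivial kernel in more than $\{0\}$), so $\mathcal{P}(\mu_i)$ has rank exactly one, $\adj\mathcal{P}(\mu_i)$ has rank one, and any two non-zero columns of it are proportional — this is what makes the choice of $y_i$, and hence the formula \eqref{eq:diagzero}, unambiguous. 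Independence then immediately makes $y_1\in\ker\mathcal{P}(\mu_1)$ and $y_2\in\ker\mathcal{P}(\mu_2)$ linearly independent, so $Y$ is invertible.

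To see that $u$ is a right root of $\mathcal{P}$, writing $\mathcal{P}(\mu)=\sum_k C_k\mu^k$ I would compute
\begin{align*}
\sum_k C_k u^k=\Bigl(\sum_k C_k\, Y\operatorname{diag}(\mu_1^k,\mu_2^k)\Bigr)Y^{-1}=\bigl[\mathcal{P}(\mu_1)y_1\,\bigl|\,\mathcal{P}(\mu_2)y_2\bigr]\,Y^{-1}=0,
\end{align*}
using $y_i\in\ker\mathcal{P}(\mu_i)$. The remainder theorem for polynomials over the ring $\C^{2\times2}$ (cited in the excerpt) then gives $\mathcal{P}=\mathcal{Q}(\mu-u)$, and $\det(\mu-u)=(\mu-\mu_1)(\mu-\mu_2)$ by construction.

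Uniqueness is immediate from \eqref{eq:factorev}: any competing right factor $\mu-\tilde u$ with the prescribed roots satisfies $(\mu_i-\tilde u)\adj\mathcal{P}(\mu_i)=0$, forcing each $y_i$ to be a $\mu_i$-eigenvector of $\tilde u$; since $Y$ is a basis and $\mu_1\neq\mu_2$ (another consequence of independence), $\tilde u$ must equal $u$. The one place I expect to need care is the interplay between the conditions $\mathcal{P}(\mu_i)=0$, $\adj\mathcal{P}(\mu_i)=0$, and non-independence of the roots, since all three threaten to collapse the construction of $Y$ and uniqueness of $u$ simultaneously — but this is precisely what the independence hypothesis is custom-built to exclude.
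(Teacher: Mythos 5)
Your proof is correct and follows essentially the same route as the paper: the factor is built from \eqref{eq:diagzero} with eigenvectors taken from the adjugate columns, and uniqueness is read off from \eqref{eq:factorev} exactly as in the paper. The only divergence is in the existence step, where you verify directly that $u$ is a right root and then invoke the factor theorem for polynomials over rings (which the paper cites earlier but does not use at this point), whereas the paper instead shows that $\mathcal{P}\adj(\mu-u)$ vanishes at $\mu_1,\mu_2$ and is therefore divisible by the scalar polynomial $\det(\mu-u)$ --- in effect re-deriving that factor theorem; both arguments are sound, and your rank-one analysis of $\adj\mathcal{P}(\mu_i)$ makes the well-definedness and invertibility of $Y$ more explicit than the paper's one-line assertion.
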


\begin{proof}
Uniqueness follows from the above argumentation since any right root $u$ with independent eigenvalues $\mu_1,\mu_2$ is determined by \eqref{eq:diagzero}. Note, that since the columns of each $\adj\mathcal{P}(\mu_i)$ are linearly dependent the specific choice of the basis $Y$ does not affect $u$. 
For existence we need to show that \eqref{eq:diagzero} determines a right factor. We have the eigenvalue equations $uy_i=\mu_i y_i$ which immediately imply \eqref{eq:factorev}. Therefore, the polynomial $\mathcal{P}\adj(\mu-u)$ vanishes at $\mu_1,\mu_2$ and, hence, has $\det(\mu-u)$ as scalar factor. This defines a polynomial $\mathcal{Q}$ with
\begin{align*}
\mathcal{P}\adj(\mu-u)=\mathcal{Q}\det (\mu-u)=\mathcal{Q}(\mu-u)\adj(\mu-u)
\end{align*}
and we conclude $\mathcal{P}=\mathcal{Q}(\mu-u)$.
\end{proof}

This theorem allows us to find right factors of the polynomial as long as the independence condition is met. We can factorize the polynomial into linear factors if the condition is met in every step. Under the (possibly strong) condition that every pair of roots is independent in every step of the factorization we have $\frac{(2n)!}{2^n}$ possible factorizations belonging to the possibilities of choosing the pairs of roots. In most applications we only consider refactorizations which respect the pairs of roots, in other words, which are obtained by reordering a prescribed set of pairs of roots. 
Then, there are $n!$ such factorizations which belong to different paths of an $n$-dimensional parallelogram cube and on each quad we have the parallelogram equations \eqref{eq:pareqadd} and \eqref{eq:pareqmult}. 
Note, that factorization into linear factors of the form $\mu-u$ and $1+\mu u$ is the same since once can reparametrize the polynomial $\mathcal{P}(\mu)\to \mu^n\mathcal{P}(-\frac{1}{\mu})$. 
Hence, we conclude that the polynomial \eqref{eq:quadpoly} of a single quad has, generically, a total of six factorizations, belonging to the different possibilities to choose pairs out of the four roots of its determinant. If refactorization respects the pairs of roots we have the labelling property of trace and determinant and can explicitly compute the corresponding unique refactorization as

\begin{align}
\label{eq:evolbackwards}
p^i=(p^i_j-\adj p^j)p^i_j(p^i_j-\adj p^j)^{-1},\nonumber\\
p^j_i=(p^i_j-\adj p^j)p^j(p^i_j-\adj p^j)^{-1}.
\end{align}

We now turn to the factorization of quaternionic polynomials, i.e., polynomials in a real (or complex) parameter where the coefficients are quaternions. We are interested in factorizations where the factors are also quaternionic. 
Such a polynomial can be written as $\mathcal{P}=a\bm 1+b\bm i+c\bm j+d\bm k$ where $a,b,c,d\in\mathbb{R}[\mu]$ are real polynomials. 
Let us first consider the special case of a scalar root $\mu_0\in\mathbb{C}$ of $\mathcal{P}$ which means that $\mu-\mu_0$ is a common factor of $a,b,c$ and $d$. If $\mu_0\in\mathbb{R}$ is real we can split off the scalar factor and the quotient is still a quaternionic polynomial. If $\mu_0\in\mathbb{C}$ is complex this is not the case: The quotient does not remain quaternionic after division of $\mathcal{P}$ by the scalar factor.  However, since $a,b,c,d$ are real polynomials the complex conjugate $\mu-\bar\mu_0$ is also a scalar factor of $\mathcal{P}$ and, therefore, we find $(\mu-\mu_0 )(\mu-\bar\mu_0)$ as a real quadratic factor of $\mathcal{P}$. There exist infinitely many quaternionic factorizations of this real polynomial since
\begin{align*}
(\mu-\mu_0 )(\mu-\bar\mu_0)=\mu^2-2\mu\Re_\mathbb{C} \mu_0+|\mu_0|^2=(\mu-u )(\mu-\bar u)
\end{align*}
holds for any quaternion $u$ with $\Re_\mathbb{H}  u=\Re_\mathbb{C} \mu_0$ and $\det u=|\mu_0|^2$. Since the space of such possible roots $u$ is a sphere such roots are called \emph{spherical roots} of $\mathcal{P}$. A quaternionic polynomial with no real polynomial factors has no spherical roots.

\begin{theorem}
\label{thm:facquat}
Let $\mathcal{P}$ be a quaternionic polynomial with no real polynomial factors and let $\mu_0\in\mathbb{C}$ be a root of $\det \mathcal{P}$. Then, there exists a unique right factor $\mu-u$ with $u\in\mathbb{H}$ such that $\mu_0$ is a root of $\det (\mu-u)$.
\end{theorem}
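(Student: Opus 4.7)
The plan is to reduce the quaternionic statement to the complex matrix result Theorem \ref{thm:factthm}, applied to the conjugate pair $(\mu_0,\bar\mu_0)$, and then verify that the unique matrix root produced by that theorem actually lives in the subalgebra $\mathbb{H}\subseteq\mathbb{C}^{2\times2}$.

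First I would set up the symmetry. Writing $\mathcal{P}=a\mathbf{1}+b\bm i+c\bm j+d\bm k$ with $a,b,c,d\in\mathbb{R}[\mu]$, the determinant is $\det\mathcal{P}=a^2+b^2+c^2+d^2\in\mathbb{R}[\mu]$, so the roots of $\det\mathcal{P}$ come in complex-conjugate pairs. A real root $\mu_0\in\mathbb{R}$ would force $a(\mu_0)=b(\mu_0)=c(\mu_0)=d(\mu_0)=0$ and hence $(\mu-\mu_0)$ would be a real polynomial factor of $\mathcal{P}$; this is excluded by hypothesis, so $\mu_0\ne\bar\mu_0$. Using that $\bm j$ anticommutes with $\bm i$ and $\bm k$ but commutes with $\bm j$, a direct check gives the key symmetry $\overline{\mathcal{P}(\mu)}=\bm j\,\mathcal{P}(\bar\mu)\,\bm j^{-1}$, where the overline is entrywise complex conjugation.

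Next I verify that $\mu_0$ and $\bar\mu_0$ are independent. Suppose $v\in\ker\mathcal{P}(\mu_0)\cap\ker\mathcal{P}(\bar\mu_0)$. The symmetry shows $\bm j^{-1}\bar v\in\ker\mathcal{P}(\mu_0)$ as well. If $\ker\mathcal{P}(\mu_0)=\mathbb{C}^2$ then $\mathcal{P}(\mu_0)=0$ and, taking conjugates, also $\mathcal{P}(\bar\mu_0)=0$, producing the real quadratic factor $(\mu-\mu_0)(\mu-\bar\mu_0)$ of $\mathcal{P}$, excluded. Otherwise $\dim\ker\mathcal{P}(\mu_0)\le 1$, so $\bm j^{-1}\bar v=\lambda v$ for some $\lambda\in\mathbb{C}$. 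Writing out coordinates of this relation forces $(1+|\lambda|^2)v=0$, hence $v=0$. Thus the pair is independent in the sense of the definition preceding Theorem \ref{thm:factthm}.

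Theorem \ref{thm:factthm} now yields a unique right factor $\mu-u$ with $u\in\mathbb{C}^{2\times2}$, explicitly $u=Y\,\mathrm{diag}(\mu_0,\bar\mu_0)\,Y^{-1}$ where $y_1$ is any nonzero column of $\adj\mathcal{P}(\mu_0)$ and $y_2$ is any nonzero column of $\adj\mathcal{P}(\bar\mu_0)$. The remaining task, which I expect to be the main obstacle, is to show $u\in\mathbb{H}$. I would exploit the freedom of choosing $y_2$: by the symmetry, $\bm j^{-1}\bar y_1\in\ker\mathcal{P}(\bar\mu_0)$, so setting $y_2:=\bm j^{-1}\bar y_1$ is admissible. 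A short computation then gives $\bm j Y=\bar Y\,\sigma\,\mathrm{diag}(-1,1)$ with $\sigma$ the coordinate swap. Since $\sigma\,\mathrm{diag}(\mu_0,\bar\mu_0)\,\sigma^{-1}=\mathrm{diag}(\bar\mu_0,\mu_0)$ and diagonal matrices commute, this rearranges into $\bm j u\bm j^{-1}=\bar u$, which is precisely the characterization of the subalgebra $\mathbb{H}\subseteq\mathbb{C}^{2\times2}$ under the chosen embedding. Hence $u\in\mathbb{H}$.

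Uniqueness is inherited from Theorem \ref{thm:factthm}: any quaternionic right factor $\mu-u'$ with $u'\in\mathbb{H}$ has real characteristic polynomial $\det(\mu-u')=\mu^2-2\Re(u')\,\mu+\|u'\|^2$, so if $\mu_0$ is one of its roots then $\bar\mu_0$ is the other; thus $\mu-u'$ is the matrix right factor produced above, and $u'=u$. The delicate step throughout is the bookkeeping with the $\bm j$-twisted complex conjugation in step three; everything else is an application of Theorem \ref{thm:factthm} together with the observation that $\det\mathcal{P}$ has real coefficients.
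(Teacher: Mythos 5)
Your proposal is correct and follows essentially the same route as the paper: both apply Theorem~\ref{thm:factthm} to the conjugate pair $(\mu_0,\bar\mu_0)$, verify independence, and then check that the resulting root lies in $\mathbb{H}$. Your antilinear symmetry $\overline{\mathcal{P}(\mu)}=\bm j\,\mathcal{P}(\bar\mu)\,\bm j^{-1}$ is a coordinate-free repackaging of the paper's explicit observation that the matrix formed from a column of $\adj\mathcal{P}(\mu_0)$ and a column of $\adj\mathcal{P}(\bar\mu_0)$ is a nonzero (hence invertible) quaternion --- your $Y=\bigl(y_1\,|\,\bm j^{-1}\bar y_1\bigr)$ is exactly that matrix --- after which $u=Y\,\mathrm{diag}(\mu_0,\bar\mu_0)\,Y^{-1}$ is quaternionic.
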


Thus, any quaternionic polynomial can be factorized into real factors, factors belonging to spherical roots and factors given as in the theorem. This result is well-known. We present a proof as special case of Theorem \ref{thm:factthm} which allows for a direct computation of right roots using \eqref{eq:diagzero}.

\begin{proof}
To $\mathcal{P}=a\bm 1+b\bm i+c\bm j+d\bm k$ the determinant polynomial is the real polynomial $\det\mathcal{P}=a^2+b^2+c^2+d^2$ and its roots come in pairs related by complex conjugation. Thus, we set $\mu_1=\mu_0$ and $\mu_2=\bar\mu_0$. We will show that they are independent by finding a basis $Y$ given by a column of $\adj\mathcal{P}(\mu_0)$ and $\adj\mathcal{P}(\bar\mu_0)$ respectively: For this, note that,
\begin{align*}
\adj\mathcal{P}=\begin{pmatrix}a-id & -c-ib \\ c-ib & a+id\end{pmatrix}.
\end{align*}
Consider the matrices
\begin{align*}
\begin{pmatrix}a_0-id_0 & -\bar c_0-i\bar b_0 \\ c_0-ib_0 & \bar a_0+i\bar d_0\end{pmatrix},\qquad \begin{pmatrix}-c_0-ib_0 & -\bar a_0+i\bar d_0 \\ a_0+id_0 & -\bar c_0+i\bar b_0\end{pmatrix}
\end{align*}
where $a_0:=a(\mu_0),b_0:=b(\mu_0),\hdots$. The first matrix consists of the first column of $\adj\mathcal{P}(\mu_0)$ and the second column of $\adj\mathcal{P}(\bar\mu_0)$ while the second matrix consists of the second column of $\adj\mathcal{P}(\mu_0)$ and minus the first column of $\adj\mathcal{P}(\bar\mu_0)$. Both matrices are quaternions. Also, one of them does not vanish: If both vanish we have $a_0=b_0=c_0=d_0=0$ and, therefore, $\mu_0,\bar\mu_0$ are scalar roots of $\mathcal{P}$. Then, $(\mu-\mu_0 )(\mu-\bar\mu_0)$ is a real factor of $\mathcal{P}$ which contradicts our assumptions. We define $Y$ to be this non-vanishing matrix. As a quaternion, it is invertible. Therefore, $\mu_0$ and $\bar\mu_0$ are independent and Theorem \ref{thm:factthm} gives a unique right factor $\mu-u$. Since $u$ is given by \eqref{eq:diagzero} and $Y$ is a quaternion $u$ is also a quaternion.
\end{proof}

All possible quaternionic factorizations belong to an $n$-dimensional parallelogram cube:

\begin{corollary}
A quaternionic polynomial $\mathcal{P}$ of degree $n$ with no real polynomial factors has $n!$ possible factorizations of the form
\begin{align*}
\mathcal{P}=C^n(\mu-u^n)\cdots(\mu-u^1)
\end{align*}
Each factorization belongs to a path of an $n$-dimensional cube and on all quads of this cube the roots $u^i$ fulfill the parallelogram equations \eqref{eq:pareqadd} and \eqref{eq:pareqmult}. 
If all complex roots of $\det\mathcal{P}$ are distinct then all factorizations are different and the cube is evolvable.
\end{corollary}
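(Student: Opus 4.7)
The plan is to iterate Theorem \ref{thm:facquat} to obtain each factorization and to identify the combinatorial structure of the set of factorizations with the paths of the $n$-cube.

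Since $\mathcal{P}$ is quaternionic, $\det \mathcal{P}$ is a real polynomial of degree $2n$ whose $2n$ complex roots form $n$ conjugate pairs. Given an ordering $\pi \in S_n$ of these pairs, I would extract factors iteratively: at step $k$, Theorem \ref{thm:facquat} produces the unique right factor $\mu - u^k \in \mathbb{H}[\mu]$ whose determinant vanishes on pair $\pi(k)$. The quotient after each step remains a quaternionic polynomial with no real factor, since any such factor of the quotient, combined with the extracted $\mu - u^k$, would give a real factor of the original polynomial, contradicting the hypothesis. Hence every $\pi \in S_n$ gives a factorization $\mathcal{P} = C(\mu - u^n)\cdots(\mu - u^1)$, producing $n!$ factorizations in total.

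Next, I would establish a uniqueness lemma: the right factor of a quaternionic polynomial (with no real factors) of prescribed degree $k$ whose determinant has a prescribed set of $k$ conjugate pairs is unique as a polynomial. This follows by induction from Theorem \ref{thm:facquat}: any such factor contains, as its own right factor, the unique linear factor corresponding to any one of its pairs, and the remaining right factor of degree $k-1$ is then uniquely determined by induction. Applying this with $k = 2$: two orderings differing by a swap of adjacent pairs $i$ and $j$ extract the same quadratic right factor $R$ from the common intermediate polynomial, so the two linear factorizations $R = (\mu - u^j_i)(\mu - u^i) = (\mu - u^i_j)(\mu - u^j)$ coincide as polynomials. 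Matching coefficients of $\mu$ and $1$ yields $u^i + u^j_i = u^j + u^i_j$ and $u^j_i u^i = u^i_j u^j$; setting $p^i := -u^i$ (and analogously for the other three edges) recovers \eqref{eq:pareqadd} and \eqref{eq:pareqmult}. The $n!$ factorizations then organize as the paths from $\bm 0$ to $(1,\dots,1)$ in the $n$-cube, with directed edge from $S$ to $S \cup \{i\}$ carrying $p^i = -u^i$; any two paths are related by a sequence of such elementary swaps, so the $p^i$ define a parallelogram net on the cube.

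For the final assertion, assume all complex roots of $\det\mathcal{P}$ are distinct. By induction on $n$, distinct orderings give distinct factorizations: two orderings differing at position $1$ yield different first factors (different eigenvalue pairs), while orderings agreeing at position $1$ reduce to the induction hypothesis on the quotient. For evolvability, I would check on every quad that $u^j - u^i$ is a nonzero quaternion: its vanishing would force $u^i = u^j$, whose eigenvalue pair would then be simultaneously pair $i$ and pair $j$, contradicting distinctness; the other diagonal $u^i_j - u^j_i$ is analogous. Since $\mathbb{H}$ is a division algebra, nonzero quaternions are invertible. The main technical point requiring care is the uniqueness of higher-degree right factors with prescribed pair-roots, which I expect to resolve by a clean induction anchored in Theorem \ref{thm:facquat}.
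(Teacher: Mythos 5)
Your proposal is correct and follows essentially the argument the paper intends: the corollary is stated without a separate proof, being an immediate consequence of iterating Theorem \ref{thm:facquat} together with the preceding discussion of refactorizations that respect the pairs of roots, which is exactly what you do. Your added uniqueness lemma for the degree-two right factor with prescribed conjugate pairs is the right way to make precise why adjacent transpositions of the ordering produce the same quadratic block and hence the parallelogram equations on each quad, a point the paper leaves implicit.
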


We conclude this section with an example showing how one can also consider non-quaternionic factorizations of quaternionic polynomials.

\begin{example}
\label{ex:twoFacs}
As a consequence of the two factorization theorems, we look at the quaternionic polynomial
\begin{align*}
\mathcal{P}:=1+\mu\begin{pmatrix}0 & 2\\-2 & 0\end{pmatrix}+\mu^2\begin{pmatrix}-2 & 0\\0 & -2\end{pmatrix}.
\end{align*}
We know that we can find a quaternionic factorization, e.g.,
\begin{align*}
\mathcal{P}=(1+\mu\begin{pmatrix}1 & 1\\-1 & 1\end{pmatrix})(1+\mu\begin{pmatrix}-1 & 1\\-1 & -1\end{pmatrix}).
\end{align*}
By a different pairing of the roots we can also find a factorization which is not quaternionic, e.g.,
\begin{align*}
\mathcal{P}=(1+\mu\begin{pmatrix}0 & 1+\ci\\-1-\ci & 0\end{pmatrix})(1+\mu\begin{pmatrix}0 & 1-\ci\\-1+\ci & 0\end{pmatrix}).
\end{align*}
Unlike the quaternionic factorization, the second factorization is zero-folded. We will present an application of this in section \ref{sec:cknets}.
\end{example}

\section{Invariant curves and elastic rods}
\label{sec:elastica}

In this section we study curves invariant under a sequence of Bäcklund transformations, in other words, curves for which such a sequence only induces a Euclidean motion. Curves invariant under two Bäcklund transformations have been shown to be elastic rods~\cite{hoffmannSmokeRingFlow}. Here, we also show the converse using factorization: To each elastic rod we find Bäcklund transformations that leave the curve invariant.

\subsection{Bäcklund transformations of curves}

We consider Bäcklund transformations of discrete curves in Euclidean space. Such a curve is a map $\gamma:\mathbb{Z}\to \Im(\mathbb{H})\cong\mathbb{R}^3$. It is the primitive map of the edge based map $u=\gamma_1-\gamma\in \Im(\mathbb{H})$, which trivially forms a $0$-dimensional parallelogram net. We will make no distinction between the curve and the edge based parallelogram net.

We assume parametrization by arclength, i.e., $|u|=1$. Some of our results can be generalized to arbitrary parametrization, however, the explanations becomes a lot more technical and will be presented in~\cite{mythesis}. We call the curve regular if $\gamma_{\bar1}\neq\gamma_1$.
\begin{figure}[h!]
  \center
  \includegraphics[width=0.26\textwidth]{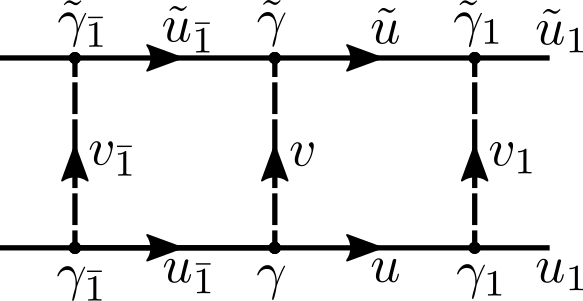}
  \caption{Notation for a Bäcklund transformation of a curve.}
  \label{fig:baecklundnot1}
\end{figure}

We will write the assumptions on real part and length of the edges $u$ as 
\begin{align*}
u\in\mathbb{S}^2=\{p\in\mathbb{H} \, | \, r_p=0, |p|=1\}=\{p\in\mathbb{H} \, | p^2=-1\}.
\end{align*}

For a Bäcklund transformation we will also assume $v\notin\mathbb{S}^2$ where $v$ is an edge variable in transformation direction. This guarantees evolvability, since $v\neq u$ everywhere. Non-evolvability can have interesting consequences as also explained in~\cite{mythesis}.

Bäcklund transformations of curves in terms of the parallelogram equations have been used before. In particular, in~\cite{pinkallSmokeRingFlow} and~\cite{hoffmannSmokeRingFlow} they are used to discretize the Hashimoto flow of a discrete curve. They noticed that the parallelogram evolution of an edge $u\in\mathbb{S}^2$ with $v\in\mathbb{H}\setminus\mathbb{S}^2$ can be written as a fractional linear map:
\begin{align*}
T_v:\mathbb{S}^2\to\mathbb{S}^2,\quad u\mapsto(v-u)u(v-u)^{-1}=(vu+1)(-u+v)^{-1}.
\end{align*}
If $\tilde u$ is a Bäcklund transform of a curve $u$ with $v=\tilde{\gamma}-\gamma\in\mathbb{H}\setminus\mathbb{S}^2$ we have
\begin{align*}
\tilde{u}=T_v (u),\qquad \tilde{u}_{\bar{1}}=T_v (u_{\bar{1}}).
\end{align*}
The second equation is just the 'backwards' evolution \eqref{eq:evolbackwards} with $u=p^i,v=p^j$. Such a map $T_v$ is linear when written in homogeneous coordinates in $\mathbb{HP}^1$, i.e., $\mathbb{HP}^1\ni(p,q)\cong pq^{-1}\in\mathbb{H}$. Then, the map is given by
\begin{align*}
u\cong\begin{pmatrix}u \\ 1\end{pmatrix}\mapsto \begin{pmatrix}v & 1 \\ -1 & v\end{pmatrix}\begin{pmatrix}u \\ 1\end{pmatrix}\cong T_v(u).
\end{align*}
We will make no distinction between the map $T_v$ and its matrix representation in $\mathbb{HP}^1$. The matrix $T_v$ belongs to the set of matrices of the form
\begin{align}
\label{eq:ABmatrix}
\begin{pmatrix}A & B \\ -B & A\end{pmatrix}\in\mathbb{H}^{2\times2}.
\end{align}
As described with more detail in~\cite{pinkallSmokeRingFlow}, this set becomes an algebra isomorphic to $gl(2,\mathbb{C})$ when equipped with the scalar multiplication
\begin{align*}
\lambda M:=(\Re_{\mathbb{C}}(\lambda)I+\Im_{\mathbb{C}}(\lambda)J)M,\qquad \lambda\in\mathbb{C},\, I=\begin{pmatrix}1 & 0\\0 & 1\end{pmatrix},\, J=\begin{pmatrix}0 & 1 \\ -1 & 0\end{pmatrix}.
\end{align*}
The matrix $J$ corresponds to the imaginary unit $\ci\in\mathbb{C}$ and, in particular, commutes with all elements of the set. Complex trace and determinant are given by
\begin{align*}
\frac12\tr \begin{pmatrix}A & B \\ -B & A\end{pmatrix}=r_A+r_B \ci,\quad \det \begin{pmatrix}A & B \\ -B & A\end{pmatrix}=|B|^2-|A|^2-2\langle A,B\rangle_4 \ci.
\end{align*}
where $\langle A,B\rangle_4=\Re(A\cdot adj(B))$ denotes the standard scalar product of the quaternions. In particular, we have $\det T_v=\|v\|^2-1+2r_v \ci\neq0$ for $v\notin\mathbb{S}^2$. The trace free component of $M=\begin{pmatrix}A & B \\ -B & A\end{pmatrix}$ is $\begin{pmatrix}\vec A & \vec B \\ -\vec B & \vec A\end{pmatrix}$ and we will, again, denote it by $\vec{M}$.

Note, that $\det M=0$ is equivalent to either $B^{-1}A\in\mathbb{S}^2$ or $A=B=0$. If $\det M\neq0$, the matrix $M$ defines a map $\mathbb{S}^2\to\mathbb{S}^2$:
\begin{align*}
u\cong\begin{pmatrix}u \\ 1\end{pmatrix}\mapsto M\begin{pmatrix}u \\ 1\end{pmatrix}\cong(A-Bu)u(A-Bu)^{-1}\in\mathbb{S}^2.
\end{align*}
It is well defined, since $A= Bu$ for a $u\in\mathbb{S}^2$ implies $\det M=0$. If $\vec M=0$ this map acts as the identity. If $B=0$ it acts as a conjugation: $u\mapsto AuA^{-1}$.

Matrices with $B=0$ are isomorphic to the quaternions. Our map $T_v$ is just a representation of $\ci+v$. Hence, it fulfills the parallelogram equations \eqref{eq:pareqadd} and \eqref{eq:pareqmult}. Similarly, the Lax matrix in~\cite{pinkallSmokeRingFlow} is a representation of $\lambda+\ci v$ and is of the form \eqref{eq:laxpair}.

\subsection{Invariant curves}

Now, we are interested in curves that stay invariant under a sequence of Bäcklund transformations: Consider a curve $\gamma$ and a sequence of $n$ Bäcklund transformations, such that the $n$-th transform $\tilde{\gamma}$ is related to $\gamma$ by a Euclidean motion. The Bäcklund transformations form a parallelogram net $p=(u,v)$ and we write $u(k)=u(k,0)$ and $\tilde{u}(k)=u(k,n)$ (see Figure~\ref{fig:baecklundnot2}).

\begin{remark}
If the Euclidean motion is the identity then $v(k,0),\cdots,v(k,n)$ forms a closed polygon for each $k$ and neighbouring polygons are related by a Bäcklund transformation. This structure has been studied in~\cite{pinkallSmokeRingFlow,hoffmannSmokeRingFlow} and one can interpret this chapter as generalization of their results.
\end{remark}

\begin{figure}[h!]
  \center
  \includegraphics[width=0.25\textwidth]{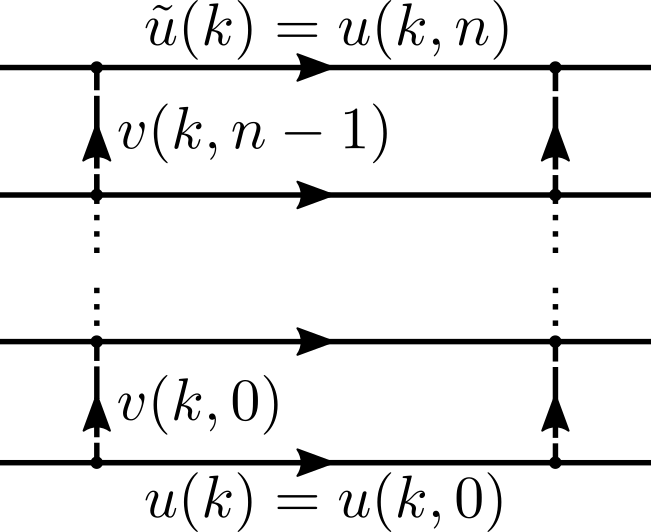}
  \caption{Notation for multiple Bäcklund transformations.}
  \label{fig:baecklundnot2}
\end{figure}
 
The Euclidean motion acts as a rotation on $u$ which we express by a quaternion $E$ fulfilling $u=E\tilde{u}E^{-1}$. Locally, this rotation composed with the Bäcklund transformations can be written as a linear map in $\mathbb{HP}^1$:
\begin{align}
\label{eq:Mbaecklunds}
M(k):=\begin{pmatrix}A(k) & B(k) \\ -B(k) & A(k)\end{pmatrix}:=\begin{pmatrix}E & 0 \\ 0 & E\end{pmatrix}T_{v(k,n-1)}\cdots T_{v(k,0)}.
\end{align}

Note, that since $\det T_{v(k,l)}\neq 0$ implies $\det M\neq0$ we always have a well-defined map. Still, there is one degenerate case we would like to exclude: $M$ can become the identity, i.e., $\vec M=0$. This can happen, e.g., if $E,v(0,0),...,v(0,n-1)\in\mathbb{R}$. Less trivial examples of this can be obtained by going back and forth or by using 3D-consistency. If $M$ is not the identity, invariance under Bäcklund transformations is an interesting condition on the curve:

\begin{definition}
We call a regular arc-length parametrized curve $\gamma$ \emph{$n$-invariant} if there exists a sequence of $n$ Bäcklund transformations of $\gamma$, such that the $n$-th transform $\tilde{\gamma}$ is related to $\gamma$ by a Euclidean motion for which $M$ defined by \eqref{eq:Mbaecklunds} is not the identity.
\end{definition}

Note, that the existence of identity Bäcklund transformations ($v\in\mathbb{R}$) implies that an $n$-invariant curve is also an $m$-invariant curve for $m\geq n$.

\begin{remark}
One can show that $n$-invariance is preserved in the associated family. This will be discussed in detail in~\cite{mythesis}.
\end{remark}

To study invariants of such curves, consider the polynomial
\begin{align}
\mathcal{P}(k,\lambda):=E(\lambda+ v(k,n-1))\cdots(\lambda+ v(k,0)).
\end{align}
From the parallelogram equations and $E(\lambda+\tilde u)=(\lambda+ u)E$ we can obtain
\begin{align}
\label{eq:ninvpolycomp}
\mathcal{P}_1(\lambda)(\lambda+ u)=(\lambda+ u)\mathcal{P}(\lambda).
\end{align}
This implies that the polynomials $\det\mathcal{P}(\lambda)$ and $\tr\mathcal{P}(\lambda)$ are constant along the curve and its coefficients yield real invariants. Here, $\det\mathcal{P}$ is the product of $\det E$ and $\det(\lambda+v(k,l))=(\lambda+r_{v(k,l)})^2+\det\vec v(k,l)$ and is, thus, determined by $\det E$ and the invariants of the parallelogram evolution. On the other hand, the coefficients of $\tr\mathcal{P}(\lambda)$ are $r_E$ and further $n$ non-trivial invariants which we can understand for $n=2$ in section \ref{sec:elasticrods}. In particular, since we know that $T_v$ is just a representation of $\ci+v$ we conclude that $M$ coincides with $\mathcal{P}(\ci)$ and, hence, $\det M$ and $\frac12\tr M=r_A+\ci r_B$ are invariants.
\begin{remark}
For $E=1$ these invariants coincide with the invariants in~\cite{pinkallSmokeRingFlow}. In fact, any dynamic that acts on such a polynomial via conjugation leaves the determinant and trace invariant. Both dynamics~\cite{izosimov2023recutting} and~\cite{tabachnikov2012discrete} are of this type as specified in~\cite[Remark 4.13]{izosimov2023recutting}.
\end{remark}

For an $n$-invariant curve, the map $M$ has $u$ and $-u_{\bar{1}}$ as fixed points, since it is the composition of the Bäcklund transformations mapping $u\mapsto\tilde u$ and the Euclidean motion mapping $\tilde{u}\mapsto u$ (similarly for $-u_{\bar{1}}$). This observation is the key to construct an algorithm which gives $n$-invariant curves:

\begin{theorem}[Algorithm]
\label{thm:algo1}
Let $E\in\mathbb{H}\setminus\{0\}$, $v(0,0),...,v(0,n-1)\in\mathbb{H}\setminus\mathbb{S}^2$, such that $\det\vec{M}(0)\neq0$. We can iterate the following steps, starting from $i=0$:
\begin{enumerate}
\item If $i=0$, choose $u(0)$ as one of the two fixed points of $M(0)$.
If $i>0$, choose $u(i)$ as the unique fixed point of $M(i)$ for which $u(i)\neq -u(i-1)$.
\item Use the parallelogram evolution to construct $v(i+1,0),...,v(i+1,n-1)$.
\end{enumerate}
Then, the curve $\gamma$ with edges $u$ is an n-invariant curve. Any $n$-invariant curve can be obtained this way.
\end{theorem}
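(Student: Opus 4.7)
The plan is to establish both directions by tracking how the two fixed points of $M(k)$ propagate as $k$ advances. The organizing object is the polynomial $\mathcal{P}(k,\lambda) = E(\lambda + v(k,n-1))\cdots(\lambda + v(k,0))$, whose value at $\lambda=\ci$ is $M(k)$.

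First, I would establish the key algebraic identity: if $u(k)$ is a fixed point of $M(k)$ (equivalently, $E\,u(k,n)\,E^{-1} = u(k)$), then
\begin{equation*}
\mathcal{P}(k+1,\lambda)\,(\lambda + u(k)) = (\lambda + u(k))\,\mathcal{P}(k,\lambda).
\end{equation*}
This follows by iterating the parallelogram compatibility $(\lambda+v(k+1,l))(\lambda+u(k,l))=(\lambda+u(k,l+1))(\lambda+v(k,l))$ to commute $(\lambda+u(k,0))$ across the product of the $(\lambda+v(k,\cdot))$ factors, producing $(\lambda+u(k,n))$ on the right, and then absorbing $E$ via the hypothesis. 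Since $(\lambda+u(k))$ is invertible in $\mathbb{H}$ for every real $\lambda$, the polynomials $\mathcal{P}(k,\lambda)$ and $\mathcal{P}(k+1,\lambda)$ are conjugate; hence $\tr\mathcal{P}$ and $\det\mathcal{P}$ are polynomial invariants of $k$. Specializing to $\lambda=\ci$ shows that $\tr M(k)$, $\det M(k)$, and therefore $\det\vec M(k)=\det M(k)-\tfrac14(\tr M(k))^2$, are $k$-invariant. Together with the hypothesis $\det\vec M(0)\neq 0$, this guarantees that $M(k)$ has two distinct complex eigenvalues, and hence two distinct fixed points on $\mathbb{S}^2\subset\mathbb{HP}^1$, at every step.

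Next I would evaluate the same identity at $\lambda=\ci$ to obtain the intertwining $M(k+1)\,T_{u(k)} = T_{u(k)}\,M(k)$, from which $-u(k)$ emerges automatically as a fixed point of $M(k+1)$. Because $u(k)\in\mathbb{S}^2$, the matrix $T_{u(k)}$ is singular; both of its columns represent the projective point $u(k)^{-1}=-u(k)$, so the image of $T_{u(k)}$ in $\mathbb{HP}^1$ collapses to the single point $-u(k)\in\mathbb{S}^2$. The intertwining then forces $M(k+1)$ to fix this image. Since $M(k+1)$ has exactly two fixed points, the one distinct from $-u(k)$ is uniquely determined; this is precisely the $u(k+1)$ chosen by the algorithm, and by construction it satisfies $E\,u(k+1,n)\,E^{-1} = u(k+1)$, propagating $n$-invariance one step forward. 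Regularity $u(k+1)\neq -u(k)$ is built into the choice rule. For the converse, an $n$-invariant curve has $u(k)$ as a fixed point of $M(k)$ by definition, and regularity forces $u(k)\neq -u(k-1)$; thus $u(k)$ is exactly the fixed point the algorithm would select, once the initial choice at $u(0)$ is matched.

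The main obstacle is handling the singular intertwining $M(k+1)\,T_{u(k)} = T_{u(k)}\,M(k)$ correctly, since $T_{u(k)}$ is not invertible and standard conjugation arguments fail. The correct viewpoint is projective: the rank drop of $T_{u(k)}$ collapses its image onto $\{-u(k)\}$, and it is exactly this collapse that supplies one of the two fixed points of $M(k+1)$ automatically, while the other, the next edge direction $u(k+1)$, must be selected. Verifying that this remaining fixed point lies in $\mathbb{S}^2$ and not elsewhere in $\mathbb{HP}^1$ relies on $M(k+1)\in\mathcal{A}$ acting on $\mathbb{S}^2 = \mathbb{CP}^1$ as a Möbius transformation with two complex eigenvectors, guaranteed distinct by the preserved invariant $\det\vec M(k)\neq 0$.
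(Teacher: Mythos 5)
Your proposal is correct and follows the paper's proof in all essentials: the conjugation identity $\mathcal{P}_1(\lambda)(\lambda+u)=(\lambda+u)\mathcal{P}(\lambda)$ makes $\tr M$ and $\det M$ (hence $\det\vec{M}$) constant along the curve, so each $M(i)$ has exactly two distinct fixed points, one of which is $-u(i-1)$, and the choice rule then exactly characterizes the $n$-invariant curves. The only (minor) divergence is that you extract the fixed point $-u(k)$ of $M(k+1)$ from the rank-deficient intertwining $M(k+1)\,T_{u(k)}=T_{u(k)}\,M(k)$ and the projective collapse of $\operatorname{im} T_{u(k)}$ onto $\{-u(k)\}$, whereas the paper argues directly that the composition of the Bäcklund maps and the Euclidean motion transports $-u(k-1)$ back to itself (and records the same identity $M_1T_u=T_uM$ only afterwards, to show $\alpha,\beta$ are constant); both justifications are sound.
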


To actually define a curve on $\mathbb{Z}$ one can extend the curve to $i<0$ by using the other fixed point. We will disregard this for brevity.

\begin{figure}[h!]
  \centering
  \includegraphics[width=0.23\textwidth]{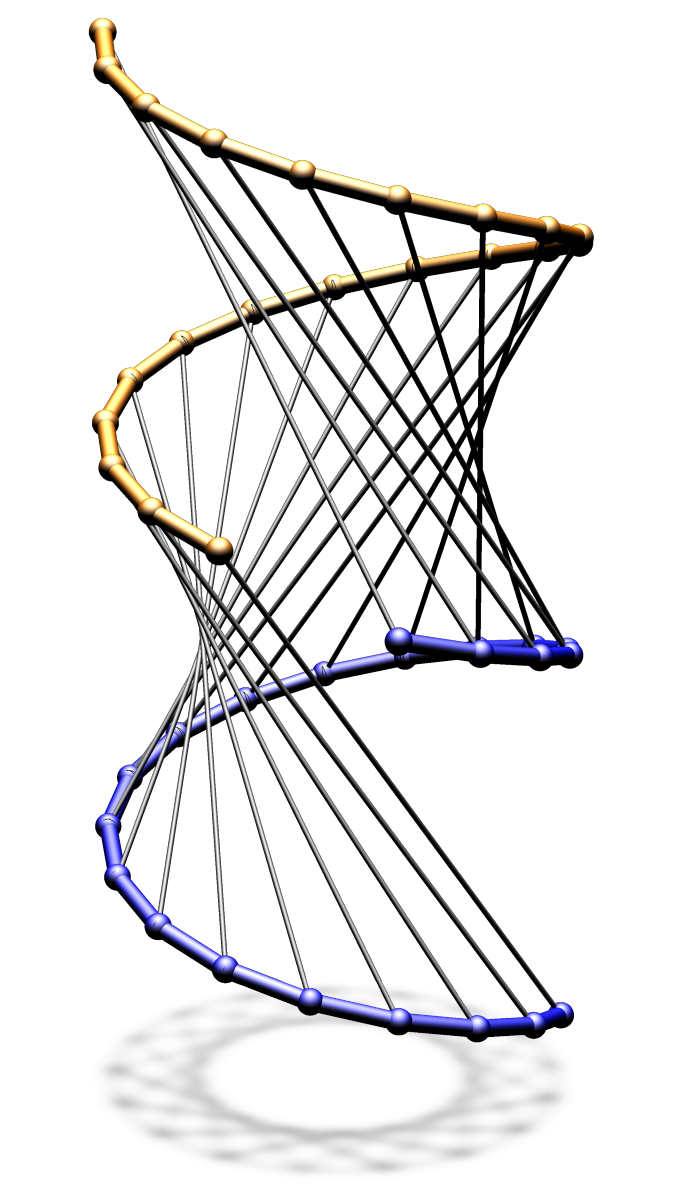}
  \includegraphics[width=0.34\textwidth]{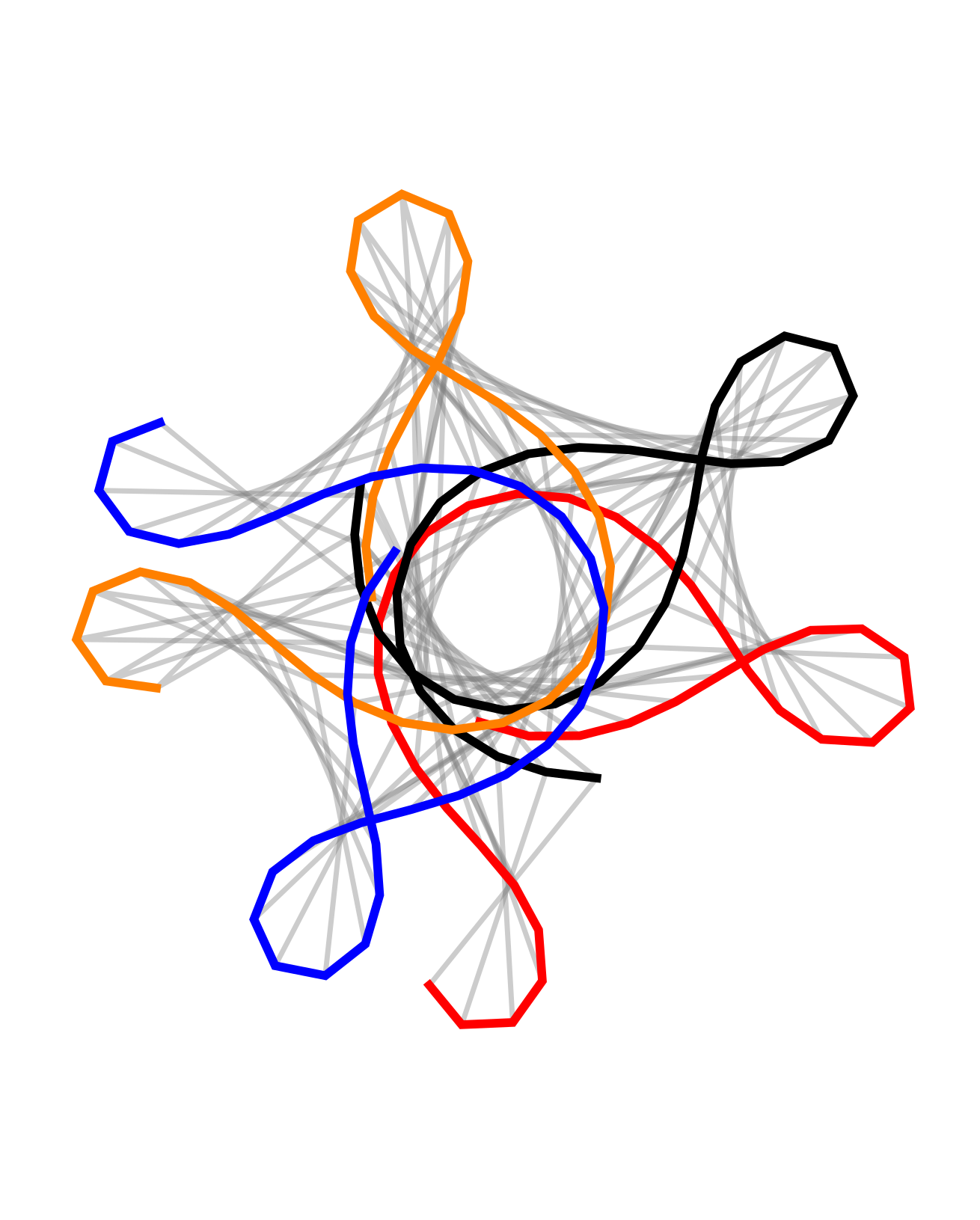}
  \includegraphics[width=0.36\textwidth]{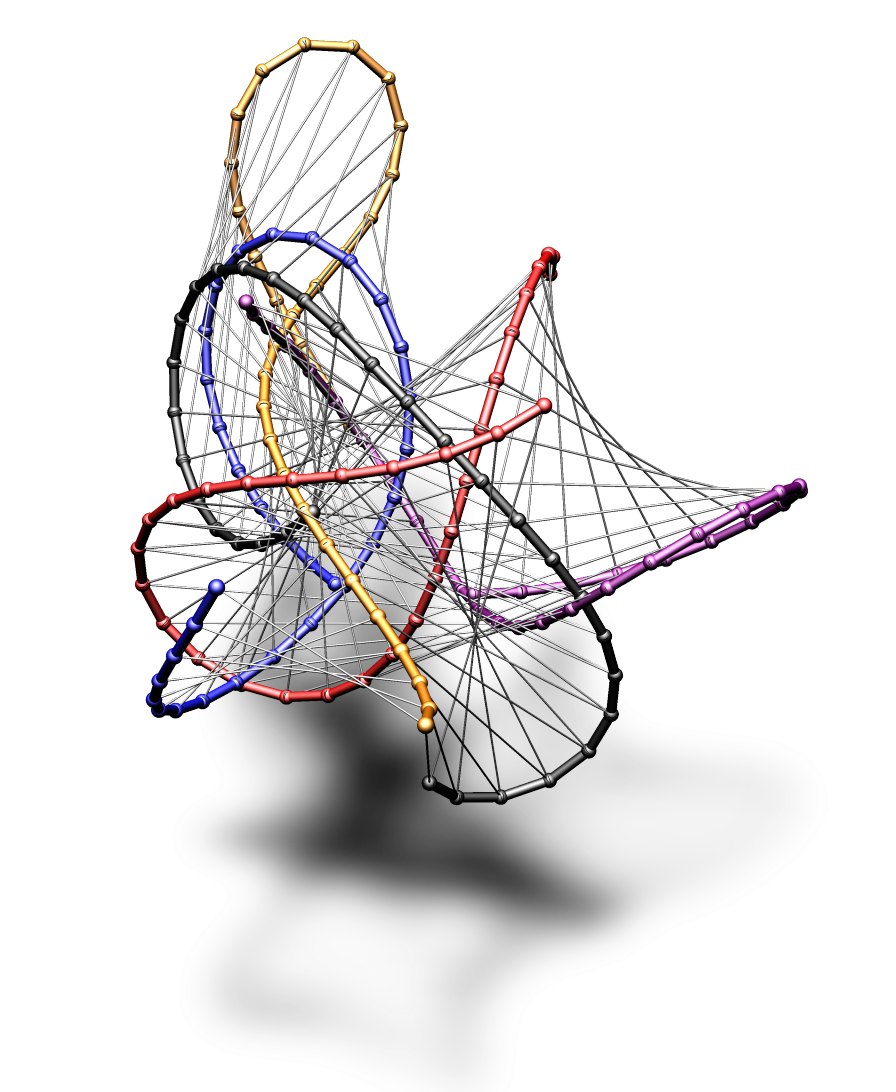}
  \caption{$n$-invariant curves with corresponding Bäcklund transformations for $n=1,3,4$. The 1-invariant curve is a helix. The 3-invariant curve is planar (see Remark \ref{rem:planarReduction}).}
  \label{fig:ninvariant}
\end{figure}

While we use our algorithm to construct a curve from initial Bäcklund data one can also apply it to construct closed Bäcklund transformations for a given curve as in~\cite{pinkallSmokeRingFlow,hoffmannSmokeRingFlow}.

For a proof we need to understand the fixed points of $M$.

\begin{lemma}
Consider a non-identity map $\mathbb{S}^2\to\mathbb{S}^2$ defined by $M=\begin{pmatrix}A & B \\ -B & A\end{pmatrix}\in\mathbb{H}^{2\times2}$ with $\det M\neq 0$. Its two fixed points are given by
\begin{align}
u=(\beta+\vec B)^{-1}(\alpha+\vec A),
\label{eq:evform}
\end{align}
where $\alpha,\beta\in\mathbb{R}$ fulfills $\alpha+\ci\beta=\pm\sqrt{\det\vec{M}}$.
\end{lemma}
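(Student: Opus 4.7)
My plan is to identify fixed points of the map $M\colon\mathbb S^2\to\mathbb S^2$ with complex eigenvectors of the trace-free part $\vec M = M - (r_A+r_B\ci) = \begin{pmatrix}\vec A & \vec B\\ -\vec B & \vec A\end{pmatrix}$ of $M$, viewed inside the algebra of matrices of the form \eqref{eq:ABmatrix} (isomorphic to $gl(2,\mathbb C)$). The core identity that makes the reduction possible is the following: for $u\in\mathbb S^2$ (so $u^2=-1$) and any complex scalar $\tilde\lambda=\alpha+\beta\ci\cong\alpha I+\beta J$,
\begin{align*}
\tilde\lambda\begin{pmatrix}u\\1\end{pmatrix}=\begin{pmatrix}\alpha u+\beta\\\alpha-\beta u\end{pmatrix}=\begin{pmatrix}u\\1\end{pmatrix}(\alpha-\beta u),
\end{align*}
which exhibits the left action of $\mathbb C$-scalars on $\binom{u}{1}$ as a right quaternionic multiplication. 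Consequently the projective classes of $Mv$ and $(M-\tilde\lambda)v$ agree in $\mathbb{HP}^1$, so $M$ and $\vec M$ share the same fixed points on $\mathbb S^2\subset\mathbb{HP}^1$.

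Next I will solve the eigenvector equation for $\vec M$. Because $\vec M$ has complex trace zero, its characteristic equation parametrizes the two eigenvalues by $\alpha+\beta\ci=\pm\sqrt{\det\vec M}$. Writing $\vec M\binom{u}{1}=\binom{u}{1}(-\alpha+\beta u)$ (the right-multiplier corresponding to eigenvalue $-(\alpha+\beta\ci)$, chosen to match the sign convention of the statement) and reading off its two rows using the identity above, I obtain
\begin{align*}
(\vec A+\alpha)u=-(\vec B+\beta)\qquad\text{and}\qquad(\vec B+\beta)u=\vec A+\alpha.
\end{align*}
Provided $\vec B+\beta\neq 0$, which as a nonzero quaternion is then automatically invertible, the second equation inverts to the claimed formula $u=(\beta+\vec B)^{-1}(\alpha+\vec A)$.

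It remains to verify that the resulting $u$ actually lies on $\mathbb S^2$. Multiplying the second equation by $u$ on the right gives $(\vec B+\beta)u^2=(\vec A+\alpha)u$, and substituting the first equation for the right-hand side yields $(\vec B+\beta)u^2=-(\vec B+\beta)$, hence $u^2=-1$, so $u$ is indeed a unit imaginary quaternion. The degenerate case $\vec B+\beta=0$ forces $\vec B=0$ and $\beta=0$ (since $\vec B$ is imaginary and $\beta$ real), after which the eigenvector equation additionally forces $\vec A=0$ and $\alpha=0$, so $\vec M=0$ and $M$ acts as a complex scalar on $\mathbb{HP}^1$, hence as the identity—contradicting the hypothesis. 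The two sign choices in $\pm\sqrt{\det\vec M}$ therefore produce exactly the two fixed points.

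The main conceptual step is the reduction to $\vec M$: once one accepts that left multiplication by a complex scalar on $\binom{u}{1}$ is in fact a right quaternionic action, the projective fixed-point problem for $M$ becomes an honest eigenvalue problem for $\vec M$, and everything else is routine quaternionic linear algebra. The only delicate bookkeeping is the sign convention relating $\alpha+\beta\ci$ to the complex eigenvalues of $\vec M$; this is absorbed into the $\pm$ in the statement.
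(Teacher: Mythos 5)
Your route is genuinely different from the paper's: you turn the fixed-point problem into a spectral problem for the trace-free part $\vec M$ and extract the constraint $\alpha+\ci\beta=\pm\sqrt{\det\vec{M}}$ from the characteristic polynomial, whereas the paper works directly with the commutation condition $(A-Bu)u=u(A-Bu)$, parametrizes $A-Bu\in\Span(1,u)$ by two reals, and then obtains the same constraint by imposing $r_u=0$ and $\|u\|=1$ on the resulting expression for $u$. Your key identity $\tilde\lambda\binom{u}{1}=\binom{u}{1}(\alpha-\beta u)$ is correct and is a nice way to see why only $\vec A,\vec B$ matter. However, the sentence you draw from it is false as stated: $Mv$ and $(M-\tilde\lambda)v$ are \emph{not} projectively equal for general $v$. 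For example, with $A=1$, $B=\bm i$ the map $M$ sends $\bm j\mapsto\bm i$ while $\vec M$ sends $\bm j\mapsto-\bm j$, so the two maps on $\mathbb{S}^2$ differ pointwise. What is true, and is all you need, is that their \emph{fixed-point sets} coincide: if $Mv=vq$ then $\vec{M}v=v\bigl(q-(r_A-r_Bu)\bigr)$ and conversely, which does follow from your identity applied to a fixed vector. That step should be restated in this weaker form.

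The more substantive gap is in the converse direction, i.e., in showing that each sign choice actually produces a point of $\mathbb{S}^2$ that is fixed. Your two row equations $(\vec A+\alpha)u=-(\vec B+\beta)$ and $(\vec B+\beta)u=\vec A+\alpha$ are both consequences of the \emph{assumption} that $u$ is a fixed point with right-multiplier $-\alpha+\beta u$; the formula \eqref{eq:evform} encodes only the second one. Your verification that $u^2=-1$ then substitutes the first equation, so it is circular as a proof that the point defined by \eqref{eq:evform} lies on $\mathbb{S}^2$: the first equation is exactly where the constraint $(\alpha+\ci\beta)^2=\det\vec{M}$ has to enter, and you never use that constraint except to name the eigenvalues. (The paper closes this loop by showing that, for $u$ of the form \eqref{eq:evform}, the two real conditions $r_u=0$ and $\|u\|=1$ are \emph{equivalent} to $(\alpha+\ci\beta)^2=\det\vec{M}$.) Two smaller points in the forward direction also need a line each: that the right-multiplier $q=\vec A-\vec Bu$ of a fixed point actually commutes with $u$ (compare the two rows to get $uq=qu$), so that it is legitimately of the form $-\alpha+\beta u$ and corresponds to a complex eigenvalue; and, if you instead argue existence via complex eigenvectors of $\vec M$, that such an eigenvector has invertible second component so that it represents an affine point $u=pq^{-1}$ rather than the point at infinity. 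None of this is fatal --- the spectral approach can be completed --- but as written the proof only cleanly establishes that every fixed point has the claimed form, not that both candidates are fixed points.
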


Note, that $\det\vec M\neq 0$ excludes the identity and the case where both fixed points coincide.

\begin{proof}
We need to find every $u\in\mathbb{S}^2$ such that
\begin{align*}
(A-Bu)u=u(A-Bu).
\end{align*}
Two quaternions commute if and only if their imaginary parts are parallel. This means that there exists $\alpha,\beta\in\mathbb{R}$ such that
\begin{align*}
A-Bu=(r_A-\alpha)+(\beta-r_B) u,
\end{align*}
which can be solved into \eqref{eq:evform}.
The equations $r_u=0$ and $\|u\|=1$ can be written as
\begin{align*}
0&=r_u\|\beta+\vec B\|^2=\Re((\beta-\vec B)(\alpha+\vec A))=\beta\alpha+\langle\vec B,\vec A\rangle\\
0&=\|\beta+\vec B\|^2-\|\alpha+\vec A\|^2=\beta^2-\alpha^2+\|\vec B\|^2-\|\vec A\|^2
\end{align*}
These are equivalent to the complex equation
\begin{align*}
(\alpha+\ci\beta)^2=\|\vec B\|^2-\|\vec A\|^2-2\ci\langle\vec B,\vec A\rangle=\det\vec{M}.
\end{align*}
Hence $u$ is a fixed point if and only if it fulfills \eqref{eq:evform} with $\alpha,\beta$ of this form.
\end{proof}

Now we can prove Theorem \ref{thm:algo1}.

\begin{proof}[of theorem]
Clearly, all $n$-invariant curves can be obtained from the algorithm. Note, that regularity of the curve implies $\det\vec M\neq 0$, since distinct fixed points of $M$ exist.

Also, the algorithm always yields an $n$-invariant curve. Note,  that $\det\vec M\neq0$ gets preserved, since $\det M$ and $\tr M$ are invariants. Then, we have exactly two fixed points in every step and by construction one of them coincides with $-u(i-1,0)$.
\end{proof}

We can even show that $\alpha$ and $\beta$ given by $u=(\beta+\vec B)^{-1}(\alpha+\vec A)$ are constant along the curve. Equation \eqref{eq:ninvpolycomp} with $\lambda=\ci$ becomes $M_1T_u=T_uM$. The off-diagonal entries of this gives $A_1+B_1u=uB+A$. This already implies that
\begin{align*}
A_1+B_1u&=u(r_B+\vec B)+\vec A+\alpha-\alpha+r_A=u(r_B+\vec B)+(\beta+\vec B)u-\alpha+r_A\\
&=u\vec B+\vec Bu+(\beta+r_b)u-\alpha+r_a=(\beta+r_b)u+(\alpha+r_a),
\end{align*}
where in the last step we use $u\vec B+\vec B u=2Re(\hat Bu)=2\alpha$. Then,
\begin{align*}
-u&=(-\beta+\vec{B}_1)^{-1}(-\alpha+\vec{A}_1)
\end{align*}
and, hence, the pair $-\alpha,-\beta$ gives us the 'backwards' fixed point $-u$. The 'forwards' fixed point $u_1$ is the other fixed point and must be obtained with $\alpha_1=\alpha,\beta_1=\beta$. We define
\begin{align*}
\hat A:=\alpha+\vec A,\qquad \hat B:=\beta+\vec B
\end{align*}
which simplifies our formulas to
\begin{align}
\label{eq:ABsimple}
u=\hat B^{-1}\hat A,\quad u_{\bar{1}}=\hat A\hat B^{-1}\quad \Rightarrow \quad u=\hat B^{-1}u_{\bar{1}}\hat B=\hat A^{-1}u_{\bar{1}}\hat A.
\end{align}

\begin{remark}
\label{rem:planarReduction}
The algorithm allows for a reduction to the plane: If all $v(0,k)\in
\Span(\bm i,\bm j)$ and $E\in\Span(\bm i,\bm j)$ for even $n$ or $E\in \Span( 1,\bm k)$ for odd $n$ then we have $\det \vec M\in\mathbb{R}$. In this case if $\det\vec M<0$ one can show $u\in\Span(\bm i,\bm j)$ meaning that the curve and its transformations lie in a common plane. This reduction also is a special case of cross-ratio dynamics \cite{arnold2022cross}.
\end{remark}

\subsection{Elastic rods}
\label{sec:elasticrods}

In this section, we give more details about the case $n=2$, such as a simplification of the above algorithm. Two Bäcklund transformations acting on a curve are known to discretize the Hashimoto flow \cite{pinkallSmokeRingFlow,hoffmannSmokeRingFlow}. We show that 2-invariant curves are exactly elastic rods as given in~\cite{lagrangeTop}. One can apply similar techniques to other $n$. For $n=1$ the results are not as interesting, since we always obtain a discrete helix. For $n\geq3$ the techniques become more complicated and will be discussed elsewhere.

For $n=2$, $A$ and $B$ are
\begin{align*}
A&=Ev_2v-E,\\
B&=E(v+v_2).
\end{align*}
Note, that $A,B$ and $E$ appear in the polynomial
\begin{align*}
\mathcal{P}(\lambda):=\lambda^2E+\lambda B+(A+E)=E(\lambda+ v_2)(\lambda+ v).
\end{align*}
The parallelogram equations \eqref{eq:pareqadd},\eqref{eq:pareqmult} yield an evolution for $A$ and $B$ along the curve:
\begin{align}
\label{eq:ABevol1}
A_1&=E(v_{12}v_1-1)=E(\tilde{u}v_2vu^{-1}-1)=E(E^{-1}uE(v_2v-1+1)u^{-1}-1)\nonumber\\
&=uAu^{-1}+uEu^{-1}-E,\\
\label{eq:ABevol2}
B_1&=E(v_1+v_{12})=E(-u+v+v_2+\tilde{u})=E(-u+v+v_2+E^{-1}uE)\nonumber\\
&=B+uE-Eu.
\end{align}

We can again observe that $r_A,r_B,r_E$ are constants and don't affect this evolution. Hence, $\hat A$ and $\hat B$ have the same evolution.

Note, that we have $(\Re(\hat Bu),\Re(\hat B))=(\alpha,\beta)\neq(0,0)$, since that would imply $\det\vec M=0$.

We can now formulate a simplified algorithm.

\begin{theorem}[Simplified algorithm for 2-invariant curves]
\label{thm:algo2}
Let $E,\hat B(0)\in\mathbb{H}\setminus\{0\}$, $u(0)\in\mathbb{S}^2$, such that at $i=0$, we have $(\Re(\hat Bu),\Re(\hat B))\neq(0,0)$. We can iterate the following steps, starting from $i=0$:
\begin{enumerate}
\item Calculate $\hat B(i+1):=\hat B(i)+u(i)E-Eu(i)$.
\item Calculate $u(i+1):=\hat B^{-1}(i)u\hat B(i)$.
\end{enumerate}
Then, the curve $\gamma$ with edge variables $u$ is a 2-invariant curve. Any 2-invariant curve can be obtained this way.
\end{theorem}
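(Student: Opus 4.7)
The plan is to reduce the simplified algorithm to Theorem~\ref{thm:algo1} specialized to $n=2$, by observing that the two steps are exactly the intrinsic evolution of the pair $(u,\hat B)$ that was derived in the preceding discussion. For step 1, equation \eqref{eq:ABevol2} gives $B_1=B+uE-Eu$, and the remark that $r_A,r_B,r_E$ are constant along the curve means that $\beta=r_{\hat B}$ is constant, so the same update holds for $\hat B$: $\hat B_1=\hat B+uE-Eu$. For step 2, \eqref{eq:ABsimple} reads $u_1=\hat B_1^{-1}u\hat B_1$ (after shifting the index of $\hat B$), which is precisely the update $u(i+1):=\hat B^{-1}(i+1)u(i)\hat B(i+1)$ read off the algorithm.

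Next I would set up the bijection between the initial data sets of the two algorithms. Given $(E,v(0,0),v(0,1))$ satisfying the hypotheses of Theorem~\ref{thm:algo1}, define $B(0)=E(v(0,0)+v(0,1))$, extract $\alpha+\ci\beta=\pm\sqrt{\det\vec M(0)}$, set $\hat B(0)=\beta+\vec B(0)$, and take $u(0)$ to be one of the two fixed points $\hat B(0)^{-1}\hat A(0)$ of $M(0)$. The non-degeneracy $\det\vec M(0)\neq 0$ translates exactly into $(\alpha,\beta)\neq(0,0)$, which is $(\Re(\hat B u),\Re(\hat B))\neq(0,0)$. Conversely, starting from $(E,\hat B(0),u(0))$ with the stated condition, define $\hat A(0):=\hat B(0)u(0)$ and recover $\alpha=r_{\hat A(0)}$, $\beta=r_{\hat B(0)}$; then $A,B$ are determined modulo the real parts $r_A,r_B$ (a primary equivalence) and $v(0,0),v(0,1)$ solve $B=E(v+v_2)$, $A+E=Ev_2 v$. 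Genericity ensures $v(0,k)\notin\mathbb{S}^2$, so Theorem~\ref{thm:algo1} applies and its output agrees with the simplified iteration by the first paragraph.

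The main obstacle is confirming two persistence properties. First, that the deterministic formula in step 2 always yields the \emph{other} fixed point required by Theorem~\ref{thm:algo1} rather than $-u(i-1)$: this follows from \eqref{eq:ABsimple}, which identifies $\hat A\hat B^{-1}$ as $u_{\bar 1}$ and $\hat B^{-1}\hat A$ as $u$, so the two fixed points of $M(i+1)$ are precisely $u(i+1)=\hat B(i+1)^{-1}u(i)\hat B(i+1)$ and $-u(i)$, and conjugation by $\hat B(i+1)$ selects the correct branch. Second, that the non-degeneracy $(\alpha,\beta)\neq(0,0)$ is preserved along the iteration; this is immediate because $\alpha,\beta$ are expressed in terms of $\tr M$ and $\det M$, which were shown to be constants of the motion via \eqref{eq:ninvpolycomp}. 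Once these two points are established, the two algorithms produce identical curves, completing the equivalence.
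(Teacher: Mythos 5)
Your overall strategy coincides with the paper's: the forward direction is the observation that the two update rules are exactly the intrinsic evolution \eqref{eq:ABevol2}, \eqref{eq:ABsimple} of the pair $(u,\hat B)$, and the converse is a reduction to Theorem~\ref{thm:algo1} after reconstructing the Bäcklund data $E,v(0,0),v(0,1)$ from $(E,\hat B(0),u(0))$. Your bookkeeping of the nondegeneracy condition ($\det\vec M(0)\neq0$ versus $(\alpha,\beta)\neq(0,0)$, via $\det\hat M=0$) and of the freedom in the real parts $r_A,r_B$ also matches the paper.

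However, there is a genuine gap at the crux of the converse. You write that ``$v(0,0),v(0,1)$ solve $B=E(v+v_2)$, $A+E=Ev_2v$'' as though the existence of such quaternions were automatic. It is not: eliminating $v_2$ turns this into a quadratic equation over the noncommutative algebra $\mathbb{H}$, and solvability is precisely the statement that the quaternionic polynomial $\mathcal{P}(\mu)=E+\mu B(0)+\mu^2(A(0)+E)$ admits a factorization $E(1+\mu v(0,1))(1+\mu v(0,0))$ into linear quaternionic factors. The paper obtains this by invoking Theorem~\ref{thm:facquat} (and flags this as the point of the proof: recovering the Bäcklund transformations \emph{is} an application of the factorization theorem). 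Your proposal never invokes it, so existence of the initial Bäcklund pair is unproven. Relatedly, ``genericity ensures $v(0,k)\notin\mathbb{S}^2$'' does not suffice, since the theorem asserts the algorithm works for \emph{all} admissible initial data; the paper's argument is that $v\in\mathbb{S}^2$ would force $\det T_v=0$ and hence $\det M(0)=0$, contradicting the choice of the real parts $a,b$ that makes $\det M(0)\neq0$. Supplying the appeal to Theorem~\ref{thm:facquat} (together with a check that its hypotheses hold for this $\mathcal{P}$) and replacing the genericity claim by the determinant argument would close the proof.
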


Remarkably, as we will see in the proof, recovering the Bäcklund transformations turns out to be an application of the factorization theorem.

\begin{proof}
If we have a 2-invariant curve, we know it fulfills the evolutions for $\hat B$ and $u$. Therefore, it can be constructed by this algorithm.

For the converse, if we have a curve constructed by this algorithm, we can define $\hat{A}:=\hat{B}u$. $\hat{A}$ evolves as in \eqref{eq:ABevol1} by
\begin{align*}
\hat A_1=\hat B_1u_1=u\hat B_1=u\hat B+u^2E-uEu=u\hat Au^{-1}-E+uEu^{-1}.
\end{align*}
Unfortunately, for $\hat M:=\begin{pmatrix}\hat A &\hat B\\ -\hat B & \hat A\end{pmatrix}$ we have $\det\hat M=0$, and, hence, this can not be a map as in \eqref{eq:Mbaecklunds}. But we can alter the real parts, i.e., we can define $A:=a+\hat{A},B:=b+\hat B$ with $a,b\in\mathbb{R}$, such that $\det M\neq0$ for $M:=\begin{pmatrix}A & B \\-B & A\end{pmatrix}$. The new $A$ and $B$ still fulfill \eqref{eq:ABevol1} and \eqref{eq:ABevol2}. Now, we can apply the factorization theorem to the polynomial
\begin{align*}
\mathcal{P}(\mu):=E+\mu B(0)+\mu^2(A(0)+E)
\end{align*}
to obtain a quaternionic factorization from theorem \ref{thm:facquat}:
\begin{align*}
\mathcal{P}(\mu)=E(1+\mu v(0,1))(1+\mu v(0,0))
\end{align*}
The factors $v$ are not in $\mathbb{S}^2$, since this would have implied $\det M(0)=0$. Also, since $0=\det\hat M(0)=(\alpha+\ci\beta)^2+\det\vec M(0)$, we have $\det\vec M(0)\neq 0$.

We can apply algorithm \ref{thm:algo1} to $E,v(0,0),v(0,1),u(0)$ to obtain a 2-invariant curve. Our edge variables $u$ and $A,B$ must agree with the corresponding values of the new curve, since their evolution agrees and they agree at $i=0$. Hence, our curve $u$ is 2-invariant.
\end{proof}

The freedom in choosing $r_A$ and $r_B$ means that there are many Bäcklund transformations that leave the curve invariant. There is the third freedom to also choose $r_E$ which corresponds to choosing the angle of rotation of the Euclidean motion between $\gamma$ and $\tilde\gamma$. One can use these freedoms to get special Bäcklund transformations for a given 2-invariant curve. This will be presented in~\cite{mythesis}.

\begin{figure}[h!]
  \centering
  \includegraphics[width=0.3\textwidth]{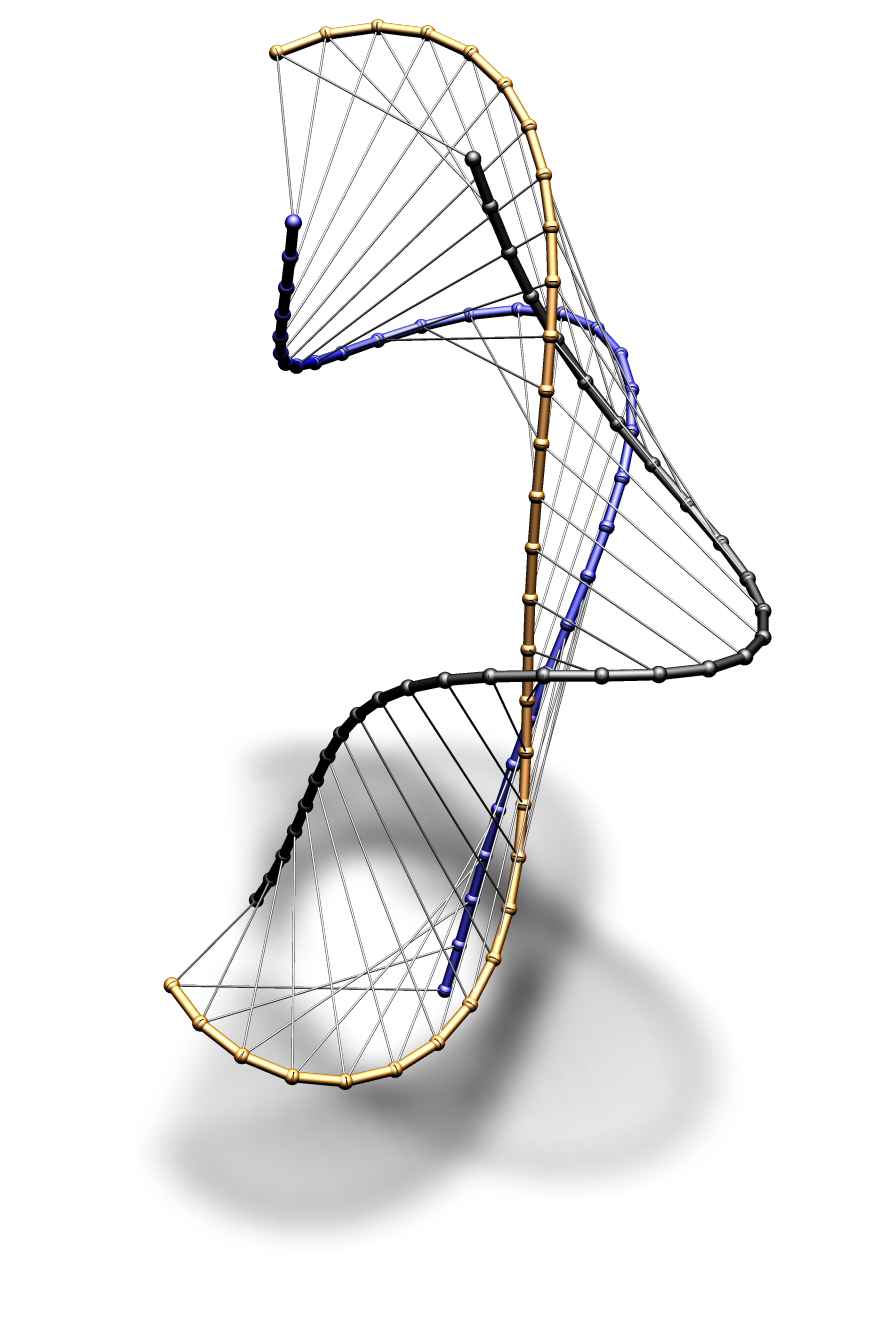}
  \includegraphics[width=0.31\textwidth]{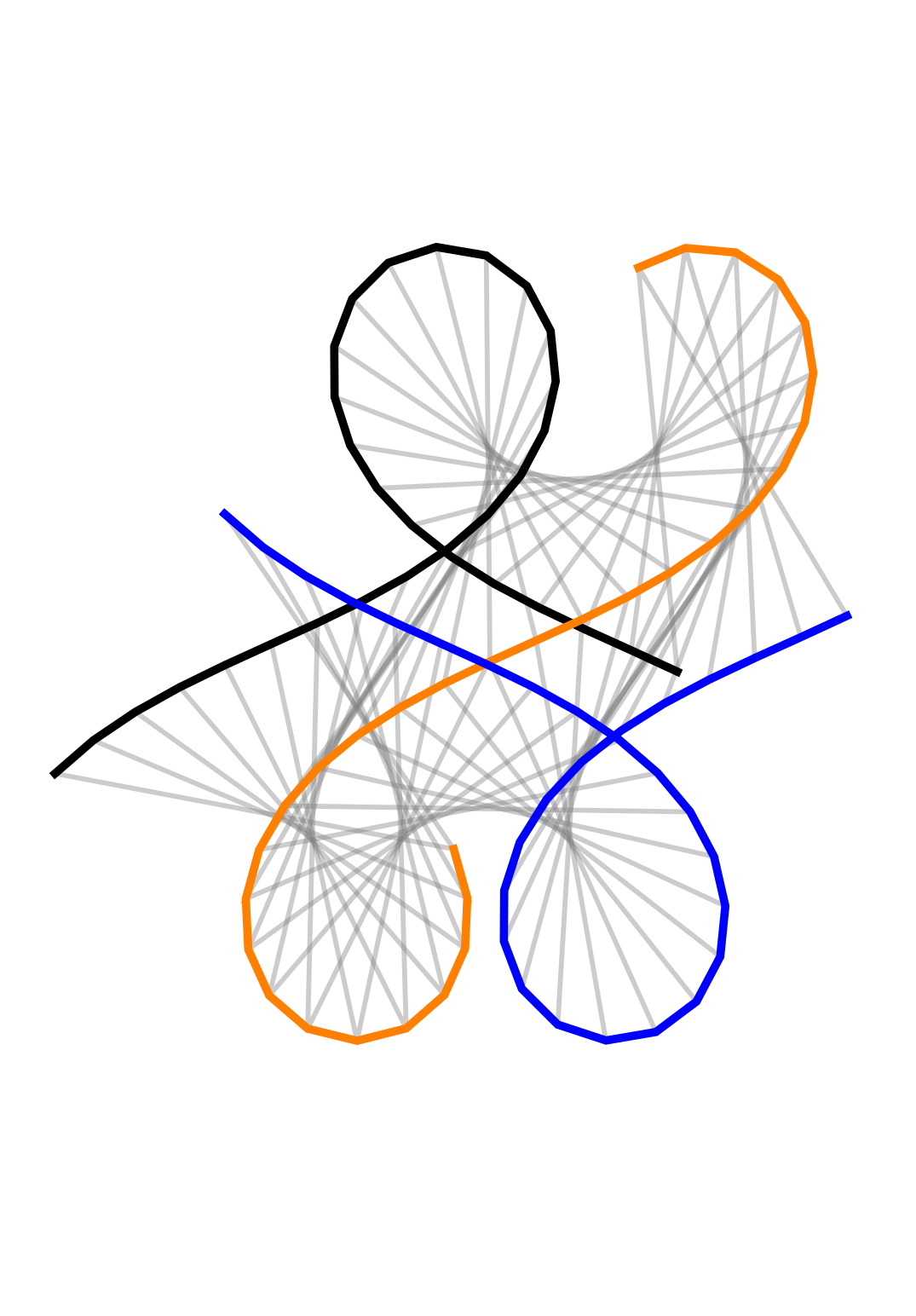}
  \includegraphics[width=0.33\textwidth]{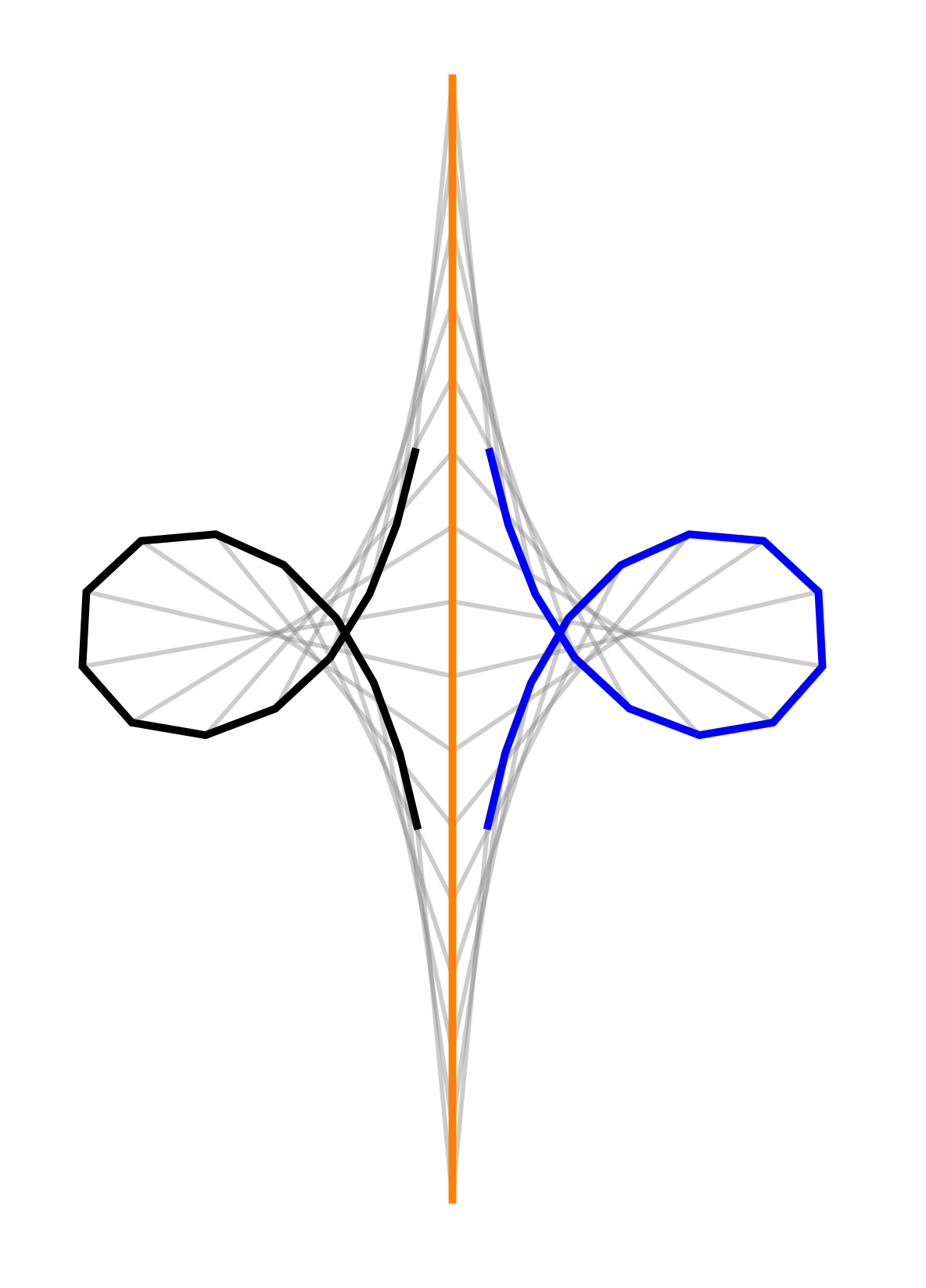}
  \caption{Elastic rods and their corresponding Bäcklund transformations. If all curves are in a common plane the first and last curve are related by reflection and translation. The euler loop is 2-invariant, since it can be transformed into the straight line which can then be transformed to a reflected euler loop.}
  \label{fig:elastica}
\end{figure}

We will now show that 2-invariant curves are precisely elastic rods as described in~\cite{lagrangeTop}. Elastica are usually described as curves minimizing bending energy under certain constraints. However, one can equivalently characterize them as curves that stay invariant under certain flows. In~\cite{lagrangeTop} it is shown that discrete elastic rods can be characterized as discrete curves that stay invariant under a combination of tangent flow $\dot{\gamma}=F^T$ and Hashimoto flow $\dot{\gamma}=F^H$ where
\begin{align*}
F^T=\frac{u_{\bar{1}}+u}{1+\langle u_{\bar{1}},u\rangle},\qquad F^H=2\frac{u_{\bar{1}}\times u}{1+\langle u_{\bar{1}},u \rangle}.
\end{align*}
A curve stays invariant under a flow if $\dot\gamma$ is the vector field of a Euclidean motion which in this case means that there exist constant $(\alpha,\beta)\in\mathbb{R}^2\setminus\{(0,0)\}$, $e,x\in\mathbb{R}^3$, such that
\begin{align}
\label{eq:elasticrod}
\dot\gamma=2\alpha F^T+\beta F^H=e\times\gamma+x.
\end{align}
 
\begin{theorem}
\label{thm:elasticrod}
A regular arc-length parametrized discrete curve is a 2-invariant curve if and only if stays invariant under a combination of tangent and Hashimoto flow, i.e., if it is an elastic rod as in~\cite{lagrangeTop}.
\end{theorem}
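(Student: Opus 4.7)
The plan is to derive a single explicit formula for $\vec B = \Im(\hat B)$ in terms of $F^T$ and $F^H$, from which both directions of the equivalence become one-line consequences. Starting from the conjugation identity $\hat B u = u_{\bar{1}}\hat B$ of \eqref{eq:ABsimple}, splitting into real and imaginary parts yields $\langle\vec B, u\rangle = \langle\vec B, u_{\bar{1}}\rangle = -\alpha$ together with
\[
(u + u_{\bar{1}})\times \vec B = \beta (u - u_{\bar{1}}).
\]
Taking the cross product of this with $u + u_{\bar{1}}$, using $a\times(a\times b) = \langle a,b\rangle a - \|a\|^2 b$ and $\|u + u_{\bar{1}}\|^2 = 2(1 + \langle u, u_{\bar{1}}\rangle)$, inverts the relation to the key identity
\[
\vec B = -\alpha F^T - \tfrac{\beta}{2} F^H, \qquad \text{equivalently,} \qquad 2\alpha F^T + \beta F^H = -2\vec B.
\]

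For the forward direction, the evolution \eqref{eq:ABevol2} gives $\vec B_1 - \vec B = uE - Eu = 2(u\times\vec E)$, which is exactly the difference equation satisfied by the Killing field $-2\vec E\times\gamma + c$. Hence $\vec B(k)$ itself is of the form $e\times\gamma(k) + x$ with $e = -2\vec E$ and some $x \in \mathbb{R}^3$, and substituting into the key identity turns 2-invariance into
\[
2\alpha F^T + \beta F^H = -2(e\times\gamma + x),
\]
which is the elastic rod condition \eqref{eq:elasticrod} after renaming $e, x$. Nontriviality $(\alpha,\beta)\neq (0,0)$ is inherited from $\vec M \neq 0$, which was imposed in the definition of 2-invariance.

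For the converse, given an elastic rod with $2\alpha F^T + \beta F^H = e\times\gamma + x$ and $(\alpha,\beta)\neq (0,0)$, I would define $\vec B(k) := -(e\times\gamma(k) + x)/2$ and $\vec E := e/4$, with $r_E \in \mathbb{R}$ chosen so that $E \neq 0$. By the elastic rod condition, $\vec B = -\alpha F^T - (\beta/2) F^H$, so running the derivation of Step 1 in reverse yields both the conjugation $\hat B u = u_{\bar{1}}\hat B$ and the constancy $\langle \vec B, u\rangle = -\alpha$. A direct computation confirms $\vec B_1 - \vec B = -(e\times u)/2 = uE - Eu$. The data $(u(0), \hat B(0), E)$ therefore satisfy the hypotheses of the simplified algorithm in Theorem~\ref{thm:algo2}, so $\gamma$ is the 2-invariant curve it produces. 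The explicit Bäcklund pair $v(0,0), v(0,1)$ is then recovered by applying the quaternionic factorization Theorem~\ref{thm:facquat} to $\mathcal{P}(\mu) = E + \mu B(0) + \mu^2(A(0) + E)$, with the free real parts $r_A, r_B$ perturbed generically to prevent any real polynomial factor.

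The main obstacle is Step 1: inverting $(u + u_{\bar{1}})\times \vec B = \beta(u - u_{\bar{1}})$ on $\Im(\mathbb{H})$ is sensitive to signs and requires combining the imaginary-part equation with the real-part constraint $\langle\vec B, u + u_{\bar{1}}\rangle = -2\alpha$ to fix the component of $\vec B$ along $u + u_{\bar{1}}$. Once this single formula is in hand, both implications of the theorem follow in essentially one line each from the evolution \eqref{eq:ABevol2} together with the standard characterization of Killing fields in $\mathbb{R}^3$.
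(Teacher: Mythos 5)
Your proposal is correct and follows essentially the same route as the paper: your key identity $\vec B=-\alpha F^T-\tfrac{\beta}{2}F^H$ is exactly the imaginary part of the paper's formula $\hat B=2(u_{\bar 1}+u)^{-1}(\alpha+\beta u)=\beta-\alpha F^T-\tfrac{\beta}{2}F^H$, merely derived by splitting the conjugation relation into inner- and cross-product parts rather than by a direct quaternionic inversion. Both directions then proceed as in the paper, via the evolution \eqref{eq:ABevol2} identifying $\vec B$ with a Killing field and the reduction to Theorem~\ref{thm:algo2}.
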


Since Bäcklund transformations can be seen as discrete flows this just means that curves invariant under the smooth flow are also invariant under the discrete flow and vice versa.

In~\cite{hoffmannSmokeRingFlow} it is already shown that curves that stay invariant under two Bäcklund transformations are elastic rods. Here, we also prove the converse and, in particular, know from algorithm \ref{thm:algo2} how one can reconstruct the Bäcklund transformations for a given elastic rod.

\begin{proof}
First, we express the two flows in terms of quaternions:
\begin{align*}
F^T&=-2(u_{\bar{1}}+u)^{-1},\\
\frac12F^H-1&=2\frac{u_{\bar{1}}\times u-(1+\langle u_{\bar{1}},u \rangle)}{|u_{\bar{1}}+u|^2}=2\frac{u_{\bar{1}}u+u^2}{|u_{\bar{1}}+u|^2}=-2(u_{\bar{1}}+u)^{-1}u.
\end{align*}

Now, assume $\gamma$ is 2-invariant. Then, there exists $\hat A,\hat B$ fulfilling \eqref{eq:ABevol1}, \eqref{eq:ABevol2} and \eqref{eq:ABsimple}. Now, we can calculate $\hat B$ from $u$:

\begin{align*}
(u_{\bar{1}}+u)\hat B&=(\hat Bu\hat B^{-1}+u)\hat B=\hat Bu+u\hat B=2\alpha+2\beta u,
\end{align*}
where in the last step, we use $u\vec B+\vec B u=2Re(\hat Bu)=2\alpha$. Then,
\begin{align*}
\hat B&=2(u_{\bar{1}}+u)^{-1}(\alpha+\beta u)=\beta-\alpha F^T-\frac{\beta}{2}F^H.
\end{align*}

The evolution for $\vec{B}$ is
\begin{align*}
\vec{B}_1-\vec{B}=uE-Eu=-2\vec{E}\times u=-2\vec{E}\times(\gamma_1-\gamma),
\end{align*}
which implies that there exist $x\in \Im(\mathbb{H})$, such that for $e=4\vec{E}$
\begin{align*}
-\frac12(2\alpha F^T+\beta F^H)=\vec{B}=-\frac12(e\times\gamma+x).
\end{align*}
This implies \eqref{eq:elasticrod}.

For the converse, assume there exist $\alpha,\beta\in\mathbb{R}$ and $e,x\in\mathbb{R}^3$ fulfilling \eqref{eq:elasticrod}. Define $\hat B:=\beta-\alpha F^T-\frac{\beta}{2} F^H$ and $\hat A:=\hat Bu$. By assumption, we have $\hat B_1-\hat B=uE-Eu$ for $E\in\mathbb{H}$ with $\vec{E}=\frac14e$. Also,
\begin{align*}
\hat B=2(u_{\bar{1}}+u)^{-1}(\alpha+\beta u).
\end{align*}
implies $u_{\bar{1}}\hat B=\hat Bu$ and, hence, the curve can be obtained from algorithm \ref{thm:algo2} and is, thus, a 2-invariant curve.
\end{proof}

If $\alpha=0$ the curve is known to be an elastic curve. By \eqref{eq:ABsimple}, $u_{\bar1},u,A$ are all orthogonal to $\vec B$. $A$ and $\vec B$ become tangent and binormal vector:
\begin{align*}
A \parallel F^T,\qquad \vec B \parallel F^H.
\end{align*}
If, additionally, $\vec B$ is also orthogonal to $\vec E$, the curve is a planar elastic curve.

We conclude this section with a few remarks on how the results in this section can be generalized. First, algorithms 1 and 2 can be extended to work for non arc-length parametrized curves. We believe such $2$-invariant curves to be discrete elastic rods in general parametrization. However, finding geometric characterizations such as Theorem \ref{thm:elasticrod} is difficult in this general case.

For $n>2$ we obtain a hierarchy of curves. We believe that this hierarchy is a discrete analogue of the hierarchy presented in~\cite{chern2018commuting}. We will explore this hierarchy further in the future.

\section{Cross-ratio systems and constant curvature surfaces}
\label{sec:surfaces}

In this chapter we show how cross-ratio systems and several discrete surface classes of constant curvature can be described as parallelogram nets. These surfaces can even be described as special cross-ratio systems. The simplest surface examples are 2-dimensional parallelogram nets describing K-nets, discrete pseudospherical surfaces in special coordinates. To describe circular nets of constant curvature, however, we have to consider slices in 4-dimensional parallelogram lattices. Remarkably, from this perspective we discover a method to obtain such surfaces from holomorphic data which turns out to be a novel description of the discrete DPW method. The DPW method \cite{dorfmeister1998weierstrass} is a generalized Weierstrass representation for constant mean curvature surfaces and has been discretized in \cite{ogata2017construction,discreteDPW}.

\subsection{Cross-ratio systems}
\label{sec:crsystems}

The cross-ratio of four points $a,b,c,d\in\mathbb{C}$ is defined as
\begin{align*}
cr(a,b,c,d)=\frac{a-b}{b-c}\frac{c-d}{d-a}.
\end{align*}
We consider vertex based maps $f:\mathbb{\Z}^n\to\mathbb{C}$ in the complex plane and their edges $d^i:=f_i-f$. For simplicity, we will assume the four points of each quad to be pairwise distinct.

\begin{definition}
A map $f:\mathbb{\Z}^n\to\mathbb{C}$ is called a cross-ratio system if there exists an edge labelling $\alpha: \mathcal{E}(\mathbb{Z}^n)\to\mathbb{C}$, i.e., $\alpha^i_j=\alpha^i$ for $i\neq j$, such that
\begin{align}
\label{eq:crdd}
cr(f,f_i,f_{ij},f_j)=\frac{d^i d^i_j}{d^j d^j_i}=\frac{(\alpha^i)^2}{(\alpha^j)^2}.
\end{align}
\end{definition}

The edge label requirement \eqref{eq:crdd} is sometimes called factorizing. A cross-ratio system is a map with factorizing cross-ratios.

In a cross-ratio system, we have
\begin{align*}
\frac{(\alpha^j)^2}{d^j}+\frac{(\alpha^i)^2}{d^i_j}=\frac{(\alpha^j)^2}{d^jd^j_i}(d^j_i+d^i)=\frac{(\alpha^j)^2}{d^jd^j_i}(d^i_j+d^j)=\frac{(\alpha^i)^2}{d^i}+\frac{(\alpha^j)^2}{d^j_i}.
\end{align*}
Hence, a map $f^*:\mathbb{Z}^n\to\mathbb{C}$ is defined up to translation by
\begin{align*}
(d^i)^*=f^*_i-f^*=\frac{(\alpha^i)^2}{d^i}.
\end{align*}
This map $f^*$ is called the \emph{dual} of $f$ and it is a cross-ratio system with the same labelling $\alpha$.

Cross-ratio systems can be written as a reduction of parallelogram nets:

\begin{theorem}
\label{thm:crLax}
An edge based map
\begin{align}
\label{eq:crLax}
p:\vec{ \mathcal{E}}(\mathbb{Z}^n)\to\mathbb{C}^{2\times2},\qquad
p^i=\begin{pmatrix}
0 & d^i \\ -\frac{(\alpha^i)^2}{d^i} & 0
\end{pmatrix}
\end{align}
is an evolvable parallelogram net if and only if $d$ is the edge map of a cross-ratio system with labelling $\alpha$.
\end{theorem}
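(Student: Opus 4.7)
The plan is to expand the two matrix parallelogram equations componentwise using the anti-diagonal form of $p^i$, and then match the four resulting scalar equations with the cross-ratio, labelling, and dual-map conditions.

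A quick computation shows that $p^i+p^j_i$ is again anti-diagonal with $(1,2)$ entry $d^i+d^j_i$ and $(2,1)$ entry $-\frac{(\alpha^i)^2}{d^i}-\frac{(\alpha^j_i)^2}{d^j_i}$, while $p^j_i p^i$ is diagonal with entries $-\frac{d^j_i(\alpha^i)^2}{d^i}$ and $-\frac{(\alpha^j_i)^2 d^i}{d^j_i}$. Thus the two matrix equations split into four scalar equations. The $(1,2)$ additive entry gives $d^i+d^j_i=d^j+d^i_j$, which is exactly the closedness making $d$ the edge map of some vertex based $f:\mathbb{Z}^n\to\mathbb{C}$. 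The $(1,1)$ multiplicative entry rearranges to the cross-ratio equation $\frac{d^i d^i_j}{d^j d^j_i}=\frac{(\alpha^i)^2}{(\alpha^j)^2}$, and the $(2,2)$ entry combined with $(1,1)$ yields $(\alpha^i_j/\alpha^j_i)^2=(\alpha^i/\alpha^j)^2$.

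For the \emph{if} direction everything is then immediate: $(1,2)$ is automatic for an edge map, $(1,1)$ is the cross-ratio assumption, $(2,2)$ follows from $(1,1)$ together with the labelling $\alpha^i_j=\alpha^i$, and the remaining $(2,1)$ additive entry is the dual-map identity derived in the computation just before the theorem. The real work is the \emph{only if} direction, where the labelling property must be extracted from the parallelogram equations themselves. I would write $\alpha^i_j=c\,\alpha^i$ and $\alpha^j_i=\pm c\,\alpha^j$ (as forced by combining $(2,2)$ with $(1,1)$), substitute into the remaining $(2,1)$ additive equation, and use $(1,1)$ together with the identity $d^i_j-d^j_i=d^i-d^j$ coming from $(1,2)$ to reduce $(2,1)$ to $(1-c^2)(d^i_j-d^j_i)=0$. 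Pairwise distinctness forces $d^i\ne d^j$, hence $c^2=1$; because $\alpha$ only enters $p^i$ squared, the sign can be absorbed and $\alpha^i_j=\alpha^i$ becomes a genuine edge labelling.

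Evolvability then amounts to invertibility of the anti-diagonal matrix $p^j-p^i$, whose off-diagonal entries are $d^j-d^i$ and $(\alpha^i)^2/d^i-(\alpha^j)^2/d^j$; these are an edge of $f$ and the corresponding edge of its dual $f^*$, both nonzero by the blanket pairwise distinctness assumption applied to $f$ and, via the cross-ratio, to $f^*$. The hardest step will be extracting the labelling: neither multiplicative scalar equation alone pins down $\alpha^i_j=\pm\alpha^i$, and one really needs the additive $(2,1)$ equation---after using the cross-ratio to eliminate $(\alpha^j)^2$---to close the argument.
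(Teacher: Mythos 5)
Your proof is correct, and it follows the same componentwise skeleton as the paper's --- expanding the two matrix equations into four scalar ones and matching them against closedness of $d$, the cross-ratio condition, the labelling, and the dual-map identity --- but it extracts the labelling property in a genuinely different way. The paper obtains $(\alpha^i_j)^2=(\alpha^i)^2$ in one line from the fact that $\alpha^i=\sqrt{\det p^i}$ is an invariant of the (evolvable) parallelogram evolution $p^i_j=(p^j-p^i)p^i(p^j-p^i)^{-1}$, since similar matrices have equal determinants; it never needs the additive $(2,1)$ component in the converse direction. Your argument instead eliminates $(\alpha^j)^2$ between the multiplicative $(1,1)$ and additive $(2,1)$ equations to force $c^2=1$, which is longer but self-contained and has the merit of exhibiting exactly which scalar equation pins the labelling down (and that the two multiplicative components alone cannot, since together they only yield $(\alpha^i\alpha^j_i)^2=(\alpha^j\alpha^i_j)^2$). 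One small ordering issue: in the only-if direction you justify $d^i\neq d^j$ by the blanket pairwise-distinctness convention, but that convention is a hypothesis on the vertex maps of cross-ratio systems, i.e.\ on what you are in the process of establishing; the clean source of $d^i\neq d^j$ at that stage is evolvability itself, since $\det(p^j-p^i)=-(d^j-d^i)\bigl(\tfrac{(\alpha^i)^2}{d^i}-\tfrac{(\alpha^j)^2}{d^j}\bigr)$ must be nonzero. With that substitution your argument is complete, and your treatment of evolvability in the if-direction (identifying the second off-diagonal entry as an edge of the dual and deducing its nonvanishing from the cross-ratio) fills in a step the paper leaves implicit.
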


Here, we only consider maps $\alpha$ with $\alpha^i\neq0$ and $\alpha^i\neq\alpha^j$ for $i\neq j$ which for a cross-ratio system already follows from the fact that no two points in a quad coincide.

\begin{proof}
The multiplicative parallelogram equation \eqref{eq:pareqmult} reads
\begin{align*}
(\alpha^i)^2\frac{d^j_i}{d^i}=(\alpha^j)^2\frac{d^i_j}{d^j}\qquad\text{and}\qquad (\alpha^j_i)^2\frac{d^i}{d^j_i}=(\alpha^i_j)^2\frac{d^j}{d^ i_j}
\end{align*}
If $\alpha$ has the labelling property both agree with \eqref{eq:crdd}. Hence, for a cross-ratio system the parallelogram equations are fulfilled and we can calculate evolvability using $f_i\neq f_j$. Conversely, in a parallelogram net the labelling property of $\alpha$ follows from the fact that $\alpha^i:=\sqrt{\det p^i}$ is an invariant of the parallelogram evolution. The additive parallelogram equation \eqref{eq:pareqadd} implies that $d$ can be integrated into a vertex based map and the multiplicative equation implies \eqref{eq:crdd}.
\end{proof}

The Lax representation \eqref{eq:laxpair} is gauge equivalent (with $\lambda=1$) to the original linear Lax representation for cross-ratio systems established in~\cite{nijhoff1997some}.

\begin{remark}
\label{rem:isothermicFactorizingCR}
Theorem~\ref{thm:crLax} can be stated equivalently for isothermic nets, which are characterized by having quads with real and factorizing cross-ratio in $\Im(\mathbb{H})$. The matrices are in $\mathbb{H}^{2\times2}$ and the Lax representation \eqref{eq:laxpair} is gauge equivalent to the Lax representation described in~\cite{bobenko1996discrete}. In \cite{hertrich2000transformations} the representation \eqref{eq:laxpair} appears in the context of deformations of isothermic nets.
\end{remark}

To each cross-ratio system $f$ one can define the corresponding vertex based map $s: \Z^n \to \C$ as follows:
\begin{align}
	\label{eq:sVertexMap}
s(\bm0)=1,\qquad d^i=\alpha^i s_is.
\end{align}
The parallelogram evolution \eqref{eq:parevol} for cross-ratio systems written in terms of $s$ is
\begin{align}
\label{eq:sevol}
\frac{s_{ij}}{s}=\frac{\alpha^js_j-\alpha^is_i}{\alpha^js_i-\alpha^is_j}.
\end{align}

We will now investigate how real surfaces can be found within this class.

\subsection{K-nets}
\label{sec:knets}

K-nets, also known as pseudospherical nets, have originally been studied in \cite{wunderlich1951differenzengeometrie,sauer1950parallelogrammgitter} and have been developed further in~\cite{bobenkoPinkallKNets,discretizationOfSurfacesIntegrable}. They are usually defined as nets in $f:\mathbb{Z}^2\to\mathbb{R}^3\cong\Im(\mathbb{H})$ with planar vertex stars that have the Chebyshev property, which is that opposite edges have equal length. Primitive maps of quaternionic parallelogram nets always have the Chebyshev property, implying that parallelogram nets with planar vertex stars are K-nets.

Here, we present a description of K-nets which is a reduction of both \eqref{eq:crLax} and of the representation given in Theorem \ref{thm:zerofoldedrep}. For this, consider a quaternionic two-dimensional parallelogram net that is completely contained in a plane, let's say the $\bm i,\bm j$-plane. Then, each edge variable $u,v$ has only off diagonal entries and, with $\lambda=e^t$ and $\mu=e^{-t}$, the Lax representation \eqref{eq:laxpair} takes the form
\begin{align}
\label{eq:Knetlax}
U=\begin{pmatrix}
\lambda & \frac{1}{\lambda} a \\ -\frac{1}{\lambda}\bar{a} & \lambda
\end{pmatrix},\quad
V=\begin{pmatrix}
\lambda & \frac{1}{\lambda} b \\ -\frac{1}{\lambda}\bar{b} & \lambda
\end{pmatrix}, \quad \lambda=e^t.
\end{align}
The primitive map $f$ of the net $p=(u,v)$ is a K-net, since lengths are invariants and by definition all edges lie in a common plane. This is also true for all nets in the associated family \eqref{eq:assoedge} since according to Lemma \ref{lem:vertexstars} planar vertex stars stay planar. While the original net is zero-folded and consists of anti-parallelograms, the equally folded nets in the family indeed unfold into non-planar K-nets as exemplified in Fig. \ref{fig:amsler}.

\begin{figure}[h!]
  \centering
  \includegraphics[width=0.8\textwidth]{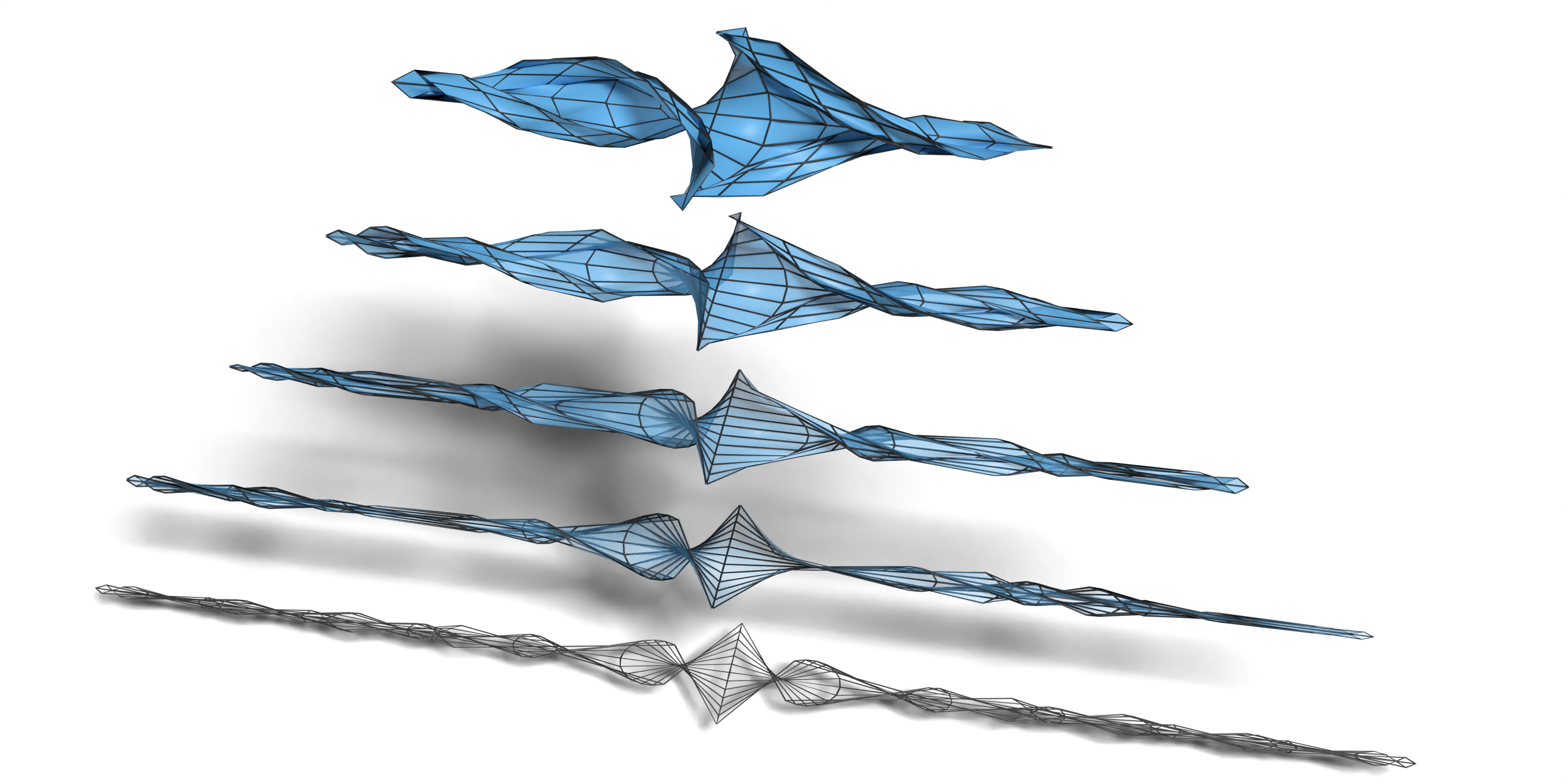}
  \caption{A zero-folded K-net unfolds into its associated family. This example shows an Amsler surface.}
  \label{fig:amsler}
\end{figure}

Also note that the planar parallelogram net $p=(u,v)$ is the special case of \eqref{eq:crLax} where the matrices are quaternions. Indeed, this is the case if and only if $(d^i)^*=\bar{d}^i$ or, equivalently, if the edge labels $\alpha^i$ are real and the vertex based map $s$ from \eqref{eq:sVertexMap} is unitary. Hence, we can consider K-nets as a special case of cross-ratio systems.

We will show that the Lax representation \eqref{eq:Knetlax} is gauge equivalent to the classical K-net Lax representation. This implies that all K-nets can be represented this way and that our associated family coincides with the known associated family.

The classical Lax representation for K-nets given by ~\cite[equation (45),(46)]{discretizationOfSurfacesIntegrable} is of the form
\begin{align}
\label{eq:classicKlax}
L=
\begin{pmatrix}
x & \ci\lambda \\ \ci\lambda & \bar{x}
\end{pmatrix},\quad
M=
\begin{pmatrix}
1 & \frac{y}{\lambda} \\ \frac{-\bar{y}}{\lambda} & 1
\end{pmatrix},\qquad \lambda=e^t.
\end{align}
If $\Phi$ is a frame solving $\Phi_1=L\Phi,\Phi_2=M\Phi$ the K-net is given by the Sym formula $f=2\Im(\Phi^{-1}\Phi')$.

\begin{proposition}
The classical K-net Lax representation \eqref{eq:classicKlax} is gauge equivalent to the Lax representation \eqref{eq:Knetlax}.
\end{proposition}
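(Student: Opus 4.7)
The plan is to exhibit a (possibly $\lambda$-dependent) vertex-based gauge $G$ such that $G_i L^i G^{-1}$ equals the parallelogram Lax \eqref{eq:Knetlax} up to primary equivalence, using the notion of gauge from the end of Section~2.3.

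First I would rewrite the classical Lax in quaternionic form. For real $\lambda$, the identity $\overline{i\lambda}=-i\lambda$ shows that $L\in\mathbb H$; explicitly, writing $y=y_r+iy_i$,
\[L=\Re x+(\Im x)\mathbf k+\lambda\mathbf i,\qquad M=1+(y_i\mathbf i+y_r\mathbf j)/\lambda.\]
The gauge can thus be taken quaternion-valued, and the target matrices $U,V$ of \eqref{eq:Knetlax} are quaternions in the planar subspace $\Span(1,\mathbf i,\mathbf j)$.

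Next I would write $G=D(\lambda)\,H$ with $H$ a vertex-based quaternion and $D(\lambda)$ a simple $\lambda$-dependent diagonal prefactor, and determine them in two steps. The constant part of $H$ is a fixed quaternion rotation cyclically permuting $\mathbf i,\mathbf j,\mathbf k$, chosen so that, after conjugation, the $\lambda\mathbf i$-term in $L$ and the $\mathbf i,\mathbf j$-terms in $M$ are routed into the imaginary slots that appear off-diagonally in $U,V$. The vertex part of $H$ is then determined by difference equations $H_1=H\cdot g^1$ and $H_2=H\cdot g^2$ for quaternionic increments $g^1,g^2$ built from the local values of $x$ and $y$, chosen to absorb the vertex-dependence of $L$ and $M$ into the frame. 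Consistency of this recursion on quads is equivalent to the classical compatibility $L_2 M=M_1 L$, which one checks directly is $y_1/y=x_2/\bar x$ together with $x_2-x=i(y_1+\bar y)$. After gauging, the off-diagonal entries of the resulting matrices are the edges $a,b$ of \eqref{eq:Knetlax}, and these two relations become precisely the parallelogram equations \eqref{eq:pareqmult} and \eqref{eq:pareqadd} for $u,v$.

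The main obstacle is the structural mismatch that the classical Lax carries $\lambda$ only in direction~1 and $\lambda^{-1}$ only in direction~2, whereas the parallelogram form $\lambda+u/\lambda$ carries both simultaneously in every direction. Reconciling this forces the prefactor $D(\lambda)$ (for instance $\mathrm{diag}(\sqrt\lambda,1/\sqrt\lambda)$) to redistribute the spectral dependence between the diagonal and off-diagonal slots of the gauged matrices; making the resulting $G=D(\lambda)H$ actually turn both $L$ and $M$ into parallelogram Lax matrices simultaneously is the heart of the computation.
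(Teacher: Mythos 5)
Your reduction of $L$ and $M$ to quaternionic form and your identification of the compatibility conditions ($x_2-x=\ci(y_1+\bar y)$ and $x_2y=\bar x y_1$) are correct, but the gauge structure you propose cannot close the argument. Any transformation $L\mapsto G_1LG^{-1}$ with $G=D(\lambda)H$, where $D$ is vertex-independent and $H$ is a vertex-based quaternion (with $\lambda$-independent increments $g^1,g^2$), multiplies the determinant by $\det H_1/\det H=|H_1|^2/|H|^2$, a positive real number independent of $\lambda$. The gauged matrix therefore has determinant $c\,(|x|^2+\lambda^2)$, a polynomial in $\lambda$, whereas the target $U$ of \eqref{eq:Knetlax} has determinant $\lambda^2+|a|^2\lambda^{-2}$, which carries a $\lambda^{-2}$ term whenever $a\neq0$; the traces ($2\Re x$ versus $2\lambda$) clash in the same way. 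Your prefactor $D(\lambda)=\mathrm{diag}(\sqrt\lambda,1/\sqrt\lambda)$ acts by conjugation, so it preserves the diagonal entries and the product of the off-diagonal entries and cannot ``redistribute'' spectral dependence into the trace or determinant. Consequently no gauge alone, $\lambda$-dependent or not, relates the two representations; this is the concrete reason the heart of your computation cannot be made to work as planned.

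The missing ingredients are two non-gauge moves: the reparametrization $\lambda\mapsto\lambda^2$ followed by the direction-dependent scalar rescalings $\hat L:=\lambda^{-1}L(\lambda^2)$ and $\hat M:=\lambda M(\lambda^2)$. These leave the compatibility condition and the Sym formula (up to scaling and real part) untouched but convert the determinants into $\lambda^2+|x|^2\lambda^{-2}$ and $\lambda^2+|y|^2\lambda^{-2}$, removing the obstruction above. After this step a single $\lambda$-independent quaternionic gauge $G(k,l)=\bigl(\begin{smallmatrix}0&-\ci\\-\ci&0\end{smallmatrix}\bigr)^k=(-\bm i)^k$ already produces the form \eqref{eq:Knetlax}, with $a=-\ci\bar x$, $b=y$ for even $k$ and $a=-\ci x$, $b=-\bar y$ for odd $k$; no difference equations in $x,y$ for the frame and no diagonal $\lambda$-prefactor are needed. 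If you want to salvage your setup, replace $D(\lambda)H$ by the reparametrization-plus-rescaling and keep only the constant quaternionic part of $H$.
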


\begin{proof}
In the Lax representation $L,M$ given by \eqref{eq:classicKlax} we can replace $\lambda=e^t$ with $\lambda^2=e^{2t}$ and rescale $L$ and $M$ with $\lambda$ and $\frac{1}{\lambda}$, respectively. We obtain
\begin{align*}
\hat{L}:=\frac{1}{\lambda}L(\lambda^2)=
\begin{pmatrix}
\frac{x}{\lambda} & \ci\lambda \\ \ci\lambda & \frac{\bar{x}}{\lambda}
\end{pmatrix},\quad
\hat{M}:=\lambda M(\lambda^2)=
\begin{pmatrix}
\lambda & \frac{y}{\lambda} \\ \frac{-\bar{y}}{\lambda} & \lambda
\end{pmatrix}.
\end{align*}
A gauge with $G(k,l)=\begin{pmatrix}0&-\ci\\-\ci&0\end{pmatrix}^k$ gives
\begin{align*}
U=G_1\hat{L}G^{-1}=
\begin{pmatrix}
\lambda & \frac{a}{\lambda} \\ \frac{-\bar{a}}{\lambda} & \lambda
\end{pmatrix},\quad
V=G_2\hat{M}G^{-1}=
\begin{pmatrix}
\lambda & \frac{b}{\lambda} \\ \frac{-\bar{b}}{\lambda} & \lambda
\end{pmatrix},
\end{align*}
where for even $k$ we have $a=-\ci\bar{x},b=y$ and for odd $k$ we have $a=-\ci x,b=-\bar{y}$. This is exactly \eqref{eq:Knetlax}.

None of these transformations affects the compatibility condition, i.e., $L_2M=M_1L$ is equivalent to $U_2V=V_1U$. Also the Sym formulas coincide up to scaling and real part, since $2\Im(\Phi^{-1}\Phi')=\Im(\Psi^{-1}\Psi')$, where $\Phi$ belongs to the pair $L,M$ and $\Psi$ belongs to $U,V$.
\end{proof}

We saw that K-nets could be interpreted as reductions of cross-ratio systems. We will now generalize this concept.

\subsection{Surfaces from 4D cross-ratio systems}
\label{sec:4D}

If one constructs the associated family of a cross-ratio system it will, in general, consist of complex matrices without much additional structure and it is difficult to interpret. If, however, the Lax matrices $P^i$ are quaternions, the associated family consists of quaternions too and describes surfaces in Euclidean three-space. This, as shown in the previous section, is the case if and only if $(d^i)^*=\bar{d}^i$ and we get K-nets throughout the family. We can get more geometric solutions if we don't require \emph{all} points to be quaternions but only, e.g., every second one.

In this section we present two reductions of the cross-ratio system that, at some points, give quaternions in the associated family. Here, the combinatorics are more involved: We consider a 4D cross-ratio system $f:\mathbb{Z}^4\to\mathbb{C}$. The vertices of the form $(i,j,i,j)\in\mathbb{Z}^4$ form a new $\mathbb{Z}^2$, which we call the \emph{diagonal net $D\subset\mathbb{Z}^4$}. Naturally, see Figure~\ref{fig:4comb}, on its edges live the Lax matrices
\begin{align}
L&=(\lambda+\frac{1}{\lambda}p^3_1)(\lambda+\frac{1}{\lambda}p^1), \nonumber \\
M&=(\lambda+\frac{1}{\lambda}p^4_2)(\lambda+\frac{1}{\lambda}p^2). 
\label{eq:4DLMLaxMatrices}
\end{align}

\begin{figure}[h!]
  \centering
  \includegraphics[width=0.68\textwidth]{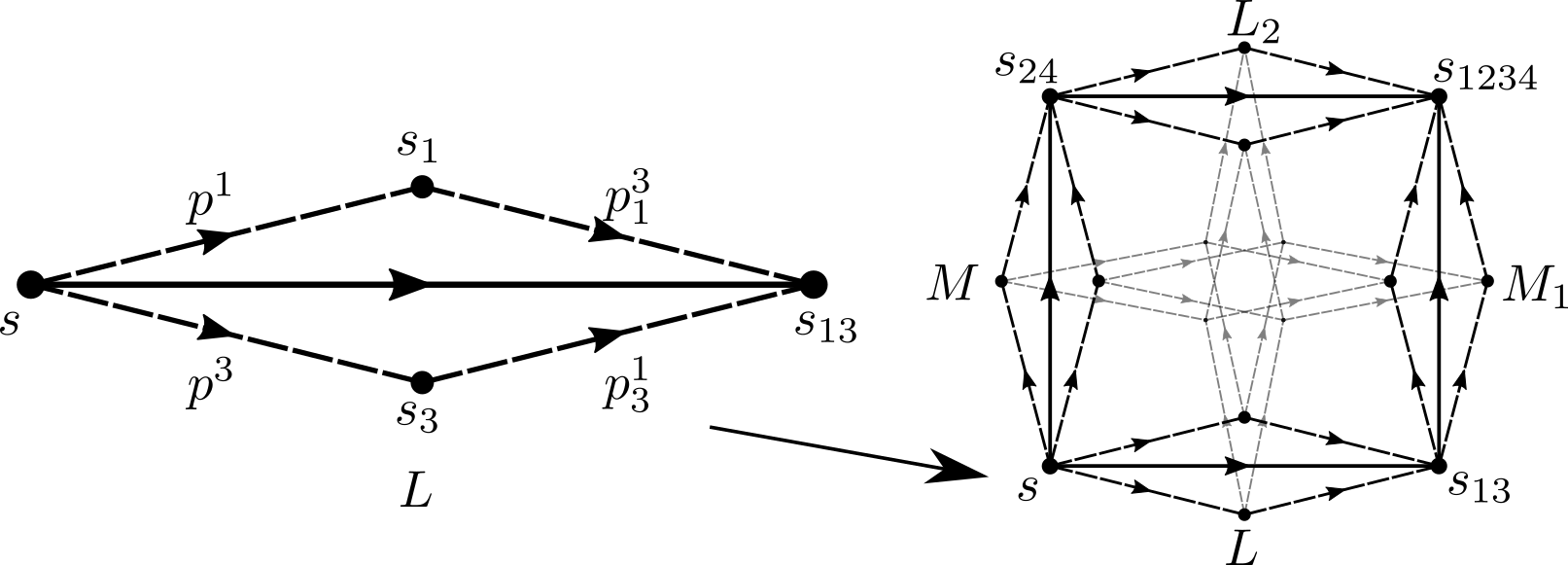}
  \caption{A combinatoric 4D cross-ratio cube with the Lax representation $L,M$ of its diagonal net $D$.}
  \label{fig:4comb}
\end{figure}

\begin{definition}
Let $f:\mathbb{Z}^4\to\mathbb{C}$ be a cross-ratio system with edge labelling $\alpha$, such that the corresponding map $s$ \eqref{eq:sVertexMap} fulfills
\begin{align}
\label{eq:dpwcond}
s_1(x)\bar{s}_3(x)=1,\qquad s_2(x)\bar{s}_4(x)=1
\end{align}
for every $x\in D$. We call $f$ 
\begin{itemize}
\item a \emph{$C^{+}$ lattice} if for all $x\in D$
\begin{align}
\label{eq:K+label}
\alpha^1(x)\bar{\alpha}^3(x)=-1,\qquad \alpha^2(x)\bar{\alpha}^4(x)=-1.
\end{align}

\item a \emph{$C^{-}$ lattice} if for all $x\in D$
\begin{align}
\label{eq:K-label}
\alpha^1(x)=\bar{\alpha}^3(x),\qquad \alpha^2(x)=\bar{\alpha}^4(x).
\end{align}

\end{itemize}
\end{definition}

We will see that these $C^+$ and $C^-$ lattices belong to discrete surfaces of constant positive and negative Gaussian curvature, respectively.

Using \eqref{eq:sevol}, one can verify that for all $x\in D$ we have $s(x)\in\mathbb{R}$ on a $C^{+}$ lattice while we have $s(x)\in\mathbb{S}^1$ on a $C^{-}$ lattice. We demonstrate this for $C^{+}$ and the first diagonal direction: If $s\in\mathbb{R}$, we have
\begin{align}
\label{eq:replacement}
\bar s_{13}=\bar s\frac{\bar\alpha^1\bar s_1-\bar\alpha^3\bar s_3}{\bar \alpha^1\bar s_3-\bar\alpha^3\bar s_1}=s\frac{\frac{-1}{\alpha^3s_3}-\frac{-1}{\alpha^1s_1}}{\frac{-1}{\alpha^3s_1}-\frac{-1}{\alpha^1s_3}}=s\frac{\alpha^1s_1-\alpha^3s_3}{\alpha^1s_3-\alpha^3s_1}=s_{13}.
\end{align}

The $C^{+}$ and $C^{-}$ lattice reductions are consistent:
\begin{proposition}
\label{dpwprop}
Every two-dimensional cross-ratio system can be extended into a unique $C^+$ and a unique $C^-$ lattice.
\end{proposition}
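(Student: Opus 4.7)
The plan is to construct the $C^+$ extension; the $C^-$ case follows the same pattern. I identify the given 2D cross-ratio system with the restriction of the sought 4D cross-ratio system $f$ to the diagonal net $D$, and build the off-diagonal data using the $C^+$ conditions together with the 3D-consistency of the cross-ratio evolution \eqref{eq:sevol}.

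First, I set up the 4D labels. The condition \eqref{eq:K+label} forces $\alpha^3 = -1/\bar{\alpha}^1$ and $\alpha^4 = -1/\bar{\alpha}^2$, so the free label data reduces to $\alpha^1$ and $\alpha^2$. These are pinned down (up to discrete choices) by matching the edge labels of the given 2D system with the cross-ratios produced by the 4D system on diagonal quads of $D$.

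Next, I determine $f$ at the immediate neighbours of each vertex $x \in D$. The elementary 4D quad in directions $1, 3$ has vertices $x, x + e_1, x + e_1 + e_3, x + e_3$, with $x$ and $x + e_1 + e_3$ already known from $f|_D$. Its cross-ratio equation provides one relation between $f(x + e_1)$ and $f(x + e_3)$. The $C^+$ constraint $s_1(x)\bar{s}_3(x) = 1$, together with $s(x) \in \mathbb{R}$ on $D$, supplies a second independent relation, so generically $f(x + e_1), f(x + e_3)$ are uniquely determined. Directions $(2, 4)$ are handled in the same way. From this off-diagonal data, the 3D-consistent evolution \eqref{eq:sevol} extends $f$ uniquely to all of $\mathbb{Z}^4$.

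Finally, I must verify that the $C^+$ conditions \eqref{eq:dpwcond} and \eqref{eq:K+label} hold on the entire diagonal, not only at the seed vertices. The calculation \eqref{eq:replacement} already shows that $\bar{s}_{13} = s_{13}$ is preserved by the diagonal evolution; an analogous computation confirms that $s_1 \bar{s}_3 = 1$ itself propagates from one $D$-vertex to the next, and the direction $(2,4)$ case is symmetric. The hardest step is the second one: confirming that the joint cross-ratio plus $C^+$ system admits a unique non-degenerate solution at each diagonal vertex. This reduces to a quadratic equation, and its non-vanishing discriminant corresponds to the standard non-degeneracy condition (distinct points on every quad) inherited from the given 2D cross-ratio system.
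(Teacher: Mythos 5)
There is a genuine gap, and it starts at the very first step: you identify the given two-dimensional cross-ratio system with the restriction of $f$ to the diagonal net $D$, but that is not what ``extended'' means here. In the paper's construction the given 2D system is the restriction of $f$ to the $1,2$-coordinate plane, i.e.\ $f(i,j,0,0)$ --- this is the ``discrete holomorphic data'' --- while the diagonal net $D$ carries the resulting \emph{surface} (a CMC/constant-curvature net), whose edges have quadratic Lax matrices \eqref{eq:4DLMLaxMatrices}. The restriction $f|_D$ is not a cross-ratio system in the sense of \eqref{eq:crdd} at all: its quads are unions of four elementary quads of $\mathbb{Z}^4$ and do not carry factorizing cross-ratios. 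So your proposal sets out to prove a different statement (reconstructing a $C^\pm$ lattice from its diagonal surface), not the proposition. Even for that different statement your argument does not deliver uniqueness: at each diagonal vertex you solve one cross-ratio relation together with $s_1\bar s_3=1$ for the two unknowns $f(x+e_1),f(x+e_3)$, and you yourself observe that this reduces to a quadratic equation --- which generically has two roots, so the extension would be determined only up to a binary choice at each step, contradicting the claimed uniqueness.

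The intended argument is more elementary. Place the given system in the $1,2$-plane with its labels $\alpha^1,\alpha^2$ and vertex map $s$. The labelling property together with \eqref{eq:K+label} (resp.\ \eqref{eq:K-label}) determines $\alpha^3,\alpha^4$ everywhere, e.g.\ $\alpha^3(i,j,k,l)=\alpha^3(k,0,k,0)$ is fixed by the condition at the diagonal point $(k,0,k,0)$. Then $s$ is propagated into the $1,3$- and $2,4$-coordinate planes by a staircase that alternates the pointwise condition \eqref{eq:dpwcond} at diagonal vertices with the evolution \eqref{eq:sevol} on elementary quads (Figure~\ref{fig:dpwproof}); each step is linear/uniquely determined, which is where uniqueness comes from. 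Once $s$ is known on all four axes, the 3D-consistent evolution determines it on all of $\mathbb{Z}^4$. The only thing left to verify is that \eqref{eq:dpwcond} then holds at \emph{every} vertex of $D$, not just where it was imposed; this is the consistency computation via the replacement rule \eqref{eq:replacement}, showing $s_{13}\bar s_{13}=s_{34}\bar s_{12}=s_{14}\bar s_{23}=1$ after one step and $s_{123}\bar s_{134}=s_{124}\bar s_{234}=1$ after two. Your final paragraph gestures at this propagation check, which is the right instinct, but it cannot rescue the construction because the underlying identification of the seed data is wrong.
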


\begin{figure}[h!]
  \centering
  \includegraphics[width=0.3\textwidth]{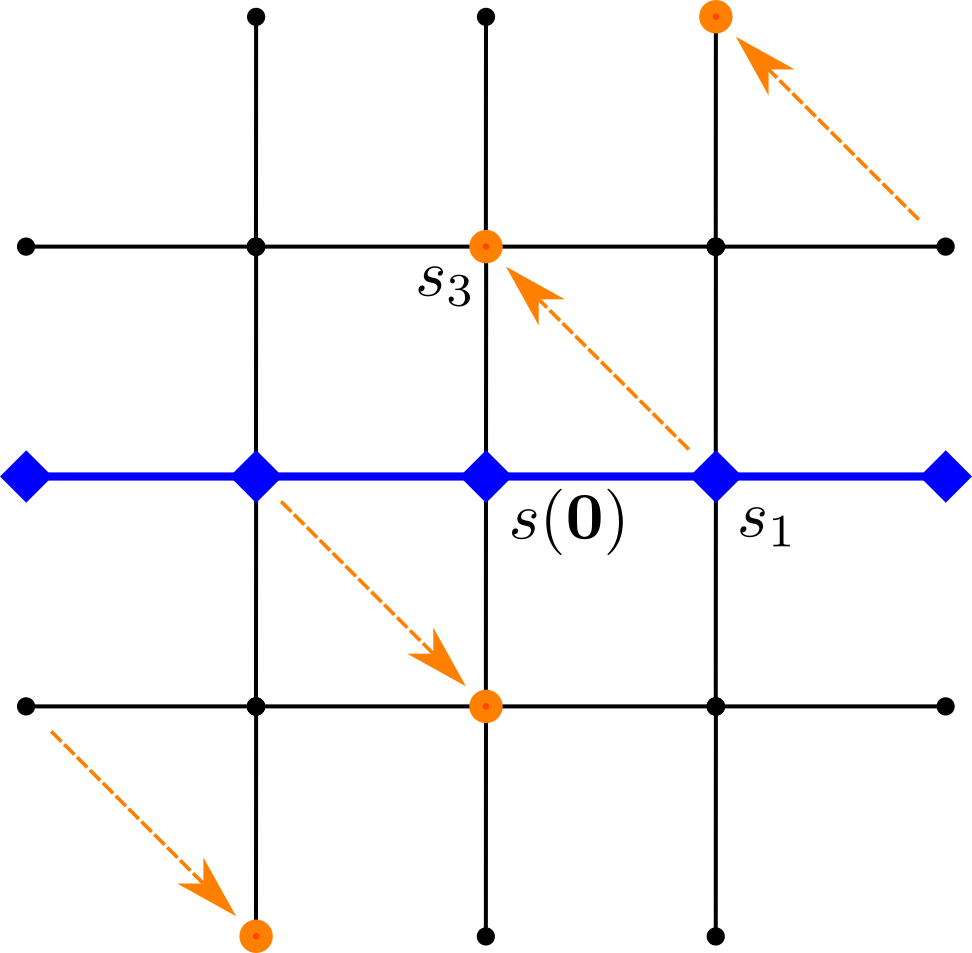}
  \caption{In the 1,3-coordinate plane one can construct the map $s$ only from $s(i,0,0,0)$ (blue) by using \eqref{eq:dpwcond} as indicated by the arrows (orange) and \eqref{eq:sevol} to complete each quad.}
  \label{fig:dpwproof}
\end{figure}

\begin{proof}
Consider a two-dimensional cross-ratio system $f(i,j,0,0)$ in the $1,2$-plane with labelling $\alpha^1,\alpha^2$ and $s$ as in \eqref{eq:sVertexMap}. We will construct a $C^{-}$ lattice from this initial data.

The labelling property of $\alpha$ together with \eqref{eq:K-label} determines $\alpha$ on the whole net. For example, for $\alpha^3$ we have
\begin{align*}
\alpha^3(i,j,k,l)=\alpha^3(k,0,k,0)=\bar\alpha^1(k,0,k,0)=\bar\alpha^1(k,0,0,0).
\end{align*}

Equations \eqref{eq:dpwcond} and \eqref{eq:sevol} determine $s$ on the 1,3-coordinate plane, as seen in Figure~\ref{fig:dpwproof}, and, similarly, on the 2,4-coordinate plane. Then, $s$ is determined on the axes and can be evolved to the whole 4D lattice. Now, the 4D cross-ratio system $f$ is determined by $\alpha$, $s$ and $f(\bm 0)$.

We need to prove consistency of \eqref{eq:dpwcond}. Consider a point $x\in D$ where
\begin{align*}
s\bar{s}=1,\qquad s_3\bar s_1=1,\qquad s_4\bar s_2=1.
\end{align*}
Then, one step of \eqref{eq:sevol}, using the available replacement rules just as in \eqref{eq:replacement}, yields
\begin{align*}
s_{13}\bar s_{13}=s_{34}\bar s_{12}=s_{14}\bar s_{23}=1,
\end{align*}
and a second step yields
\begin{align*}
s_{123}\bar{s}_{134}=s_{124}\bar s_{234}=1.
\end{align*}
Hence, \eqref{eq:dpwcond} is consistent.

The construction of a $C^{+}$ lattice and its consistency proof works exactly the same using \eqref{eq:K+label} in place of \eqref{eq:K-label}. Here, after one step we have
\begin{align*}
s_{13}=\bar s_{13},\qquad s_{34}=\bar s_{12},\qquad s_{14}=\bar s_{23}.
\end{align*}
\end{proof}

Next, we prove that we can indeed get real surfaces in the associated family \eqref{eq:assoedge} for special $\lambda$. For this, we show that $L,M$~\eqref{eq:4DLMLaxMatrices} and their derivatives with respect to $t$ $L',M'$ are quaternions, possibly after a gauge. For $\Phi(\bm0),f_t(\bm 0)\in\mathbb{H}$ this implies that $\Phi(x)$ and $f_t(x)$ are quaternions for all $x\in D$ since $\Phi_{13}=L\Phi$ and
\begin{align*}
(f_t)_{13}-f_t=\Phi_{13}^{-1}\Phi_{13}'-\Phi^{-1}\Phi'=\Phi_{13}^{-1}(L\Phi'+L'\Phi-L\Phi')=\Phi_{13}^{-1}L'\Phi\in\mathbb{H}.
\end{align*}
Here, we only consider the first lattice direction of the diagonal net since both directions work the same. Consequently, we will only investigate $L$ and $L'$. For this, we compute
\begin{align}
\label{eq:explicitL}
L=\begin{pmatrix}\lambda & \frac{\alpha^3s_{13}s_1}{\lambda} \\
\frac{-\alpha^3}{\lambda s_{13}s_1} & \lambda\end{pmatrix}
\begin{pmatrix}\lambda & \frac{\alpha^1s_{1}s}{\lambda} \\
\frac{-\alpha^1}{\lambda s_{1}s} & \lambda\end{pmatrix}
=\begin{pmatrix}
\lambda^2-\frac{1}{\lambda^2}\alpha^3\alpha^1\frac{s_{13}}{s}& s_1(\alpha^1s+\alpha^3s_{13}) \\
-\frac{1}{s_1}(\frac{\alpha^3}{s_{13}}+\frac{\alpha^1}{s}) & \lambda^2-\frac{1}{\lambda^2}\alpha^3\alpha^1\frac{s}{s_{13}}
\end{pmatrix}.
\end{align}
Also, note, that \eqref{eq:sevol} implies
\begin{align}
\label{eq:offdiagonal}
s_1(\alpha^1s+\alpha^3s_{13})=s_3(\alpha^3s+\alpha^1s_{13}).
\end{align}

\begin{proposition}
In a $C^{-}$ lattice with $\lambda=e^t\in\mathbb{R}$ the points $f_t$ of the diagonal net of the associated family are quaternions.
\end{proposition}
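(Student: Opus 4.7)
The plan is to verify that the matrices $L$, $L'$, $M$, $M'$ are all of quaternionic form at points of $D$, namely $\begin{pmatrix} a & b \\ -\bar b & \bar a \end{pmatrix}$, which is exactly the embedding of $\mathbb{H}$ into $\mathbb{C}^{2\times2}$ specified earlier in the paper. Once this is established, the inductive argument already sketched in the paragraph preceding the proposition does the rest: starting from $\Phi(\bm 0), f_t(\bm 0) \in \mathbb{H}$, the transport $\Phi_{13} = L\Phi$ and the edge formula $(f_t)_{13} - f_t = \Phi_{13}^{-1} L' \Phi$ (and their analogues in the $(2,4)$-direction using $M, M'$) propagate quaternionic values throughout $D$.

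The first step is to read off the diagonal entries of $L$ from the explicit form \eqref{eq:explicitL}. Using the $C^-$ conditions $\alpha^3 = \bar\alpha^1$ together with $s\bar s = s_{13}\bar s_{13} = 1$ at $x \in D$, and the fact that $\lambda \in \mathbb{R}$, a direct computation gives $\overline{L_{11}} = L_{22}$. The second and more subtle step is to check the conjugacy relation $L_{21} = -\overline{L_{12}}$ on the off-diagonal. For this I would use \eqref{eq:offdiagonal} to rewrite $L_{12} = s_3(\alpha^3 s + \alpha^1 s_{13})$, then invoke the mixed condition \eqref{eq:dpwcond}, i.e.\ $s_3 = 1/\bar s_1$, to obtain $L_{12} = \bar s_1^{-1}(\alpha^3 s + \alpha^1 s_{13})$. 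Conjugating and applying $\bar\alpha^1 = \alpha^3$, $\bar s = 1/s$, $\bar s_{13} = 1/s_{13}$ yields exactly $-L_{21}$. This entanglement of the $C^-$ labelling, the circle-valuedness of $s$ on $D$, and the compatibility relation \eqref{eq:offdiagonal} is where the real content of the proof sits, and it is the step I expect to be the main obstacle.

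The derivative $L'$ is then easy: inspecting \eqref{eq:explicitL} shows that the off-diagonal entries of $L$ do not depend on $\lambda$, so $(L')_{12} = (L')_{21} = 0$. With $\lambda = e^t$ so that $\lambda' = \lambda$, differentiating the diagonal gives $L'_{11} = 2\lambda^2 + \frac{2}{\lambda^2}\alpha^3\alpha^1\frac{s_{13}}{s}$ and an analogous expression for $L'_{22}$; the same conjugacy check used for $L$ itself shows $\overline{L'_{11}} = L'_{22}$, so $L'$ is a (diagonal) quaternion. Finally, $M$ and $M'$ are handled by the identical argument after replacing the index pair $(1,3)$ with $(2,4)$, since the $C^-$ conditions and \eqref{eq:dpwcond} are symmetric in these pairs.

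Combining these verifications with the inductive argument quoted above completes the proof: $\Phi$ and $f_t$ remain quaternionic along every step of $D$ in either diagonal direction, so $f_t(x) \in \mathbb{H}$ for all $x \in D$.
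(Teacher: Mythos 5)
Your proposal is correct and follows essentially the same route as the paper: verify from the explicit form \eqref{eq:explicitL} that $L$ is quaternionic using \eqref{eq:K-label}, \eqref{eq:dpwcond}, $s,s_{13}\in\mathbb{S}^1$ and \eqref{eq:offdiagonal} for the off-diagonal entries, then propagate via the inductive argument preceding the proposition. The only cosmetic difference is that you compute $L'$ explicitly, whereas the paper just notes that $L(t)\in\mathbb{H}$ for all $t$ forces $L'\in\mathbb{H}$ since $\mathbb{H}$ is a real subspace.
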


\begin{proof}
Here, $L$ as in \eqref{eq:explicitL} is a quaternion. This can be seen using \eqref{eq:dpwcond},\eqref{eq:K-label}, $s,s_{13}\in\mathbb{S}^1$ and, for the off-diagonal entries, \eqref{eq:offdiagonal}.
Also, if $L$ is a quaternion for all $t$, so is its derivative $L'$.
\end{proof}

\begin{proposition}
In a $C^{+}$ lattice with $\lambda=e^{\ci t}\in\mathbb{S}^1$ the points $f_t$ of the diagonal net of the associated family are quaternions.
\end{proposition}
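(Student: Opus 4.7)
The plan is to follow the same strategy as the $C^{-}$ proof: produce a vertex-based, $\lambda$-independent gauge $G$ so that $\tilde L := G_{13}\,L\,G^{-1}$, and analogously $\tilde M := G_{24}\,M\,G^{-1}$ for the other diagonal direction, are quaternions for every $\lambda = e^{\ci t}\in\mathbb{S}^1$. Once this is in place, $\tilde L' = \frac{d}{dt}\tilde L$ is automatically in $\mathbb{H}$, and the same induction as before --- $\Phi_{13} = \tilde L\,\Phi$ preserves $\mathbb{H}$ and $(f_t)_{13} - f_t = \Phi_{13}^{-1}\tilde L'\,\Phi$ stays in $\mathbb{H}$ --- yields $f_t(x)\in\mathbb{H}$ for all $x\in D$ starting from quaternionic initial data at $\bm 0$.

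The setup begins by substituting the $C^{+}$ data into the explicit form \eqref{eq:explicitL}: on $D$ we have $s, s_{13}\in\mathbb{R}$, and the labelling $\alpha^1\bar\alpha^3 = -1$ forces $w := \alpha^1\alpha^3$ to lie on the unit circle, so $\bar w = 1/w$. Together with $\bar\lambda = 1/\lambda$ a short calculation gives the key identities
\begin{align*}
\overline{L_{11}} \;=\; -\frac{s_{13}}{w\,s}\,L_{22}, \qquad \frac{\overline{L_{12}}}{L_{21}} \;=\; \frac{s\,s_{13}}{w},
\end{align*}
where the off-diagonal identity uses \eqref{eq:offdiagonal} for the factor $(\alpha^1 s + \alpha^3 s_{13})/(\alpha^3 s + \alpha^1 s_{13}) = s_3/s_1$ and the relation $\overline{s_1} = 1/s_3$ from \eqref{eq:dpwcond}. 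Thus $L$ deviates from being quaternionic only by $\lambda$-independent scalar factors --- exactly the kind of discrepancy a vertex-based gauge can absorb. Note that the first identity shows $\overline{L_{11}}$ is proportional to $L_{22}$ rather than to $L_{11}$ itself, so $G$ cannot be purely diagonal: a natural ansatz combines an anti-diagonal flip with a diagonal rescaling, where the rescaling factors satisfy a recurrence along the $e_1+e_3$ direction read off from the two identities above, and the analogous identities for $M$ fix the recurrence in the $e_2+e_4$ direction.

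The main obstacle is the global existence of such a $G$: the recurrences coming from $L$ and from $M$ must be mutually consistent on every elementary quad of the diagonal net $D$. I expect this to follow from the $C^{+}$-consistency of Proposition \ref{dpwprop} together with the evolution \eqref{eq:sevol}, which relate $s$, $w$ and the various shifts in a way tailored precisely to the identities above. Conceptually, manufacturing this $G$ is the discrete counterpart of the twisted $SU(2)$ loop-group reality condition that underpins the classical DPW construction of CMC surfaces --- matching the authors' interpretation of $C^{+}$ lattices as the positive constant curvature analogue of the $C^{-}$ case.
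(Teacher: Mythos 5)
Your two identities for $\overline{L_{11}}$ and $\overline{L_{12}}/L_{21}$ are correct and are in fact all that is needed, but the step where you conclude that the gauge ``cannot be purely diagonal'' is a misreading of the target condition, and it leaves the proof incomplete at exactly the point where it has to be finished. Quaternionic form means $\tilde L_{22}=\overline{\tilde L_{11}}$ and $\tilde L_{21}=-\overline{\tilde L_{12}}$, so for a diagonal gauge $G=\mathrm{diag}(g_1,g_2)$ the relevant question is whether one can arrange $\frac{(g_2)_{13}}{g_2}L_{22}=\overline{\left(\frac{(g_1)_{13}}{g_1}\right)}\,\overline{L_{11}}$; your first identity $\overline{L_{11}}=-\frac{s_{13}}{w\,s}L_{22}$ says precisely that this reduces to a solvable scalar condition. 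You appear to be testing $L$ against $\overline{L_{11}}\propto L_{11}$, which is not what quaternionicity asks of the $(1,1)$ and $(2,2)$ entries. The paper's proof simply takes the diagonal gauge $G=\begin{pmatrix}1&0\\0&s\end{pmatrix}$ together with an overall scalar factor $\frac1\beta$, $\beta^2:=-\alpha^1\alpha^3\in\mathbb{S}^1$, writes out $\tilde L=\frac1\beta G_{13}LG^{-1}$ explicitly as in \eqref{eq:K+Ltilde}, and reads off quaternionicity from $s,s_{13}\in\mathbb{R}$, $\lambda,\beta\in\mathbb{S}^1$ and \eqref{eq:offdiagonal} --- i.e.\ from your two identities.

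Because that gauge is written explicitly in terms of the vertex function $s$ (already globally defined on the whole lattice by Proposition \ref{dpwprop}) and the edge labels, the ``global existence / consistency on every quad'' issue you defer to a hoped-for argument does not arise: one and the same vertex-based $G$ works simultaneously for $L$ and $M$, and compatibility of the gauged Lax pair is automatic. So your argument correctly sets up the reduction (a $\lambda$-independent vertex gauge absorbing scalar discrepancies, followed by the induction $\Phi_{13}=\tilde L\Phi$, $(f_t)_{13}-f_t=\Phi_{13}^{-1}\tilde L'\Phi$), but it stops short of producing the gauge, and the anti-diagonal-flip ansatz you propose instead is both unnecessary and the source of the consistency problem you then cannot resolve.
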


\begin{proof}
A gauge of $L$ as in \eqref{eq:explicitL} with $G:=\begin{pmatrix}1&0\\0&s\end{pmatrix}$ and multiplication with $\frac{1}{\beta}$ defined by $\beta^2:=-\alpha^1\alpha^3\in\mathbb{S}^1$ does not affect the compatibility condition or the associated family. We get
\begin{align}
\label{eq:K+Ltilde}
\tilde{L}:=\frac{1}{\beta}G_{13}LG^{-1}=\begin{pmatrix}
\frac{\lambda^2}{\beta}+\frac{\beta}{\lambda^2}\frac{s_{13}}{s}& \frac{s_1}{\beta s}(\alpha^1s+\alpha^3s_{13}) \\
-\frac{s_{13}}{\beta s_1}(\frac{\alpha^3}{s_{13}}+\frac{\alpha^1}{s}) & \frac{\lambda^2}{\beta}\frac{s_{13}}{s}+\frac{\beta}{\lambda^2}
\end{pmatrix}.
\end{align}
We see that this is a quaternion using $s,s_{13}\in\mathbb{R},\lambda,\beta\in\mathbb{S}^1$ and, for the off-diagonal entries, \eqref{eq:offdiagonal}. Again, its derivative $\tilde L'$ is also a quaternion. 
\end{proof}

In the next two sections, we explore the surfaces obtained from this construction.

\subsection{Circular nets with constant positive Gaussian curvature and their corresponding constant mean curvature nets}

Here, we investigate the connection between parallelogram nets and discrete nets of constant mean curvature (CMC) as in~\cite{discretizationOfSurfacesIntegrable}. CMC nets are \emph{isothermic} which means (as briefly stated in remark~\ref{rem:isothermicFactorizingCR} of section \ref{sec:crsystems}) that they have quads with real factorizing cross-ratio. In particular, this implies that isothermic nets are \emph{circular nets} which means that the four points of each quad are concircular. Isothermic nets, similar to cross-ratio systems, have a dual surface~\cite[equation (36)]{bobenko1996discrete} defined up to translation. If this dual surface can be placed at constant distance to the original surface, the primal and dual net form a pair of CMC nets. At half the distance between these surfaces sits a third surface, which, as in smooth theory, has constant positive Gaussian curvature. Thus, these two types of constant curvature surfaces are described within one system of equations.

The Lax representation for this system as in~\cite{discretizationOfSurfacesIntegrable} is given by the matrices
\begin{align}
\label{eq:cmclax}
L=\begin{pmatrix}
a & \lambda b+\frac{1}{\lambda b} \\ -\frac{\bar b}{\lambda}-\frac{\lambda}{\bar b} & \bar a
\end{pmatrix},\qquad
M=\begin{pmatrix}
d & \lambda e+\frac{1}{\lambda e} \\ -\frac{\bar e}{\lambda}-\frac{\lambda}{\bar e} & \bar d
\end{pmatrix},\quad \lambda=e^{\ci t}.
\end{align}
If they fulfill the compatibility condition $M_1L=L_2M$ they define a frame $\Phi$ by $\Phi_1=L\Phi$, $\Phi_2=M\Phi$ and a family of nets $f_t=\Phi^{-1}\Phi'$ with normals $n_t=\Phi^{-1}\bm k\Phi$. At $t=0$ these are nets of constant positive Gaussian curvature and $f_0\pm\frac{1}{2}n_0$ is a pair of primal and dual CMC nets in isothermic parametrization. For $t\neq0$ they are still nets of constant curvature in different parametrizations.

In~\cite{hoffmann16}, the authors already establish a connection between CMC nets and parallelogram nets: A 3D-cube of a CMC net and its dual form a zero-folded quaternionic parallelogram cube after a combinatoric flip. A more detailed explanation and proof that the classic CMC Lax representation \eqref{eq:cmclax} and the parallelogram net Lax representation \eqref{eq:laxpair} for this cube are gauge equivalent can be found in appendix \ref{sec:cmcCubeGauge}. It also implies that the associated family of CMC coincides with \eqref{eq:assoedge}.

Here, we also establish a different connection to parallelogram nets and, in particular, to the $C^{+}$ lattice.

\begin{theorem}
The Lax representations \eqref{eq:cmclax} for CMC and positive Gaussian curvature nets and \eqref{eq:4DLMLaxMatrices} for the $C^{+}$ lattice are gauge equivalent.
\end{theorem}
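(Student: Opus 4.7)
The plan is to exhibit an explicit gauge transformation and spectral parameter reparametrization taking the $C^+$ Lax matrix $\tilde L$ of \eqref{eq:K+Ltilde} to the CMC Lax matrix $L$ of \eqref{eq:cmclax}; the $M$ matrices are handled identically along the second diagonal direction.

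First, I would align the spectral powers. As a function of $\lambda$, $\tilde L$ contains only $\lambda^{\pm 2}$ in its diagonal entries and is $\lambda$-free on the off-diagonal, so $\det\tilde L$ is a Laurent polynomial involving only the powers $\lambda^{\pm 4}$ and $\lambda^0$. On the CMC side, $\det L$ involves only $\lambda^{\pm 2}$ and $\lambda^0$. This dictates the spectral reparametrization $\lambda_{\mathrm{CMC}} = \lambda_C^2$, after which both determinants have matching spectral support up to an overall scalar. Moreover, the reality conditions on the $C^+$ lattice, namely $s,s_{13}\in\R$, $|\beta|=1$, and $\alpha^1\bar\alpha^3=-1$, together with the off-diagonal identity \eqref{eq:offdiagonal}, already guarantee that $\tilde L$ possesses the same conjugation symmetry between its top and bottom rows as $L$.

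Next, I would introduce a scalar $\lambda$-dependent prefactor $c(\lambda)$, which does not affect the associated family \eqref{eq:assoedge}, together with a vertex- and $\lambda$-dependent gauge $G(x,\lambda)$ engineered to move the $\lambda$-dependence from the diagonal of $\tilde L$ into the off-diagonal entries in the precise factored form $\lambda b + (\lambda b)^{-1}$ required by $L$. Matching entries then produces explicit formulas $a=a(s,s_{13},\alpha^1,\alpha^3,\beta)$ and $b=b(s,s_{13},\alpha^1,\alpha^3,\beta)$. With these identifications in hand, the CMC compatibility condition $M_1L=L_2M$ reduces to the cross-ratio evolution \eqref{eq:sevol} on the $C^+$ lattice, and the two Sym formulas $f_t=\Phi^{-1}\Phi'$ agree up to translation, since a scalar prefactor contributes only trivially to $\Phi^{-1}\Phi'$.

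The main obstacle I anticipate is pinning down the explicit form of $G$: the gauge must simultaneously strip the $\lambda^{\pm 1}$ contributions from both diagonal entries of $\tilde L$ and plant them in the off-diagonal positions in the correct $b\leftrightarrow -\bar b$ symmetric arrangement dictated by \eqref{eq:cmclax}. I expect $G$ to combine a vertex-dependent diagonal factor built from $s$ and $\beta$ with a $\lambda$-dependent diagonal factor; matching coefficients of $\lambda_{\mathrm{CMC}}^{\pm 1}$ in each matrix entry should then yield a small linear system whose unique solution provides the formulas for $a$ and $b$ and, by construction, the claimed gauge equivalence.
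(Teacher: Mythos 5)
Your setup is right as far as it goes: the reparametrization $\lambda^2\to\lambda$ is exactly what the paper uses, the reality conditions on $s,s_{13},\beta,\alpha^i$ together with \eqref{eq:offdiagonal} are indeed what make $\tilde L$ quaternionic, and a scalar prefactor (the paper uses $\sqrt{s/s_{13}}$) is harmless for the Sym formula. But the anticipated form of the gauge is where the argument would fail. You expect $G$ to be a product of \emph{diagonal} factors (vertex-dependent and $\lambda$-dependent). Conjugation by diagonal matrices acts on each entry of $\tilde L$ by a scalar multiple $\tfrac{(g_k)_{13}}{g_l}$, so it can never move $\lambda$-dependence between the diagonal and the off-diagonal; and no scalar multiple can turn the two-term Laurent polynomial $\tfrac{\lambda^2}{\beta}+\tfrac{\beta}{\lambda^2}\tfrac{s_{13}}{s}$ in the $(1,1)$ entry of \eqref{eq:K+Ltilde} into the $\lambda$-independent entry $a$ of \eqref{eq:cmclax} without introducing poles in $\lambda$. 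The mechanism the paper uses is structurally different: the gauge is $G(i,j,i,j)=\bm j^{\,i+j}$, i.e.\ the \emph{antidiagonal} quaternion $\bm j$ applied at every other vertex of the diagonal net. Since the parity of $i+j$ flips along each diagonal edge, $G_{13}\tilde L G^{-1}$ is $\tilde L$ multiplied by $\bm j^{\pm1}$ on one side only, which swaps the rows (resp.\ columns) and thereby transports the $\lambda^{\pm2}$ terms into the off-diagonal slots in precisely the $\lambda b+\tfrac1{\lambda b}$, $-\tfrac{\bar b}{\lambda}-\tfrac{\lambda}{\bar b}$ pattern. This parity alternation is also why the resulting identification \eqref{eq:cmcentries} involves $\hat s$ equal to $s$ or $1/s$ depending on $i+j$. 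Without this row-swapping ingredient your ``small linear system'' for the entries of a diagonal $G$ has no solution.

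A secondary omission: gauge equivalence requires the converse direction as well, i.e.\ producing a $C^+$ lattice from an arbitrary solution of \eqref{eq:cmclax}. The paper does this by noting that $\pm\ci\alpha^1,\pm\ci\alpha^3$ are recoverable as the roots of $\det L$, can be normalized to satisfy \eqref{eq:K+label}, and that $s$ is then read off from \eqref{eq:cmcentries}. Your proposal is framed only in the direction $C^+\Rightarrow$ CMC; you should either invert your entry identifications explicitly or argue why the construction is bijective.
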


This implies that the associated family \eqref{eq:assoedge} of the $C^{+}$ lattice agrees with the known CMC family.

\begin{proof}
Again, we only consider one lattice direction as both work the same.

A gauge of the $C^{+}$ lattice matrix $\tilde{L}$ as in \eqref{eq:K+Ltilde} with $G(i,j,i,j)=\bm j^{i+j}$ and $\lambda^2\to\lambda$ yields that $\sqrt{\frac{s}{s_{13}}}G_{13}\tilde{L}G^{-1}$ is of the form \eqref{eq:cmclax} with
\begin{align}
\label{eq:cmcentries}
b=\frac{1}{\beta\sqrt{\hat s_{13}\hat s}},\quad a=-\frac{\sqrt{\hat s}\hat s_1}{\sqrt{\hat s_{13}}\beta}(\alpha^3\hat s_{13}+\frac{\alpha^1}{\hat s}),\quad \hat{s}:=\begin{cases}s,\quad i+j\text{ even}\\\frac{1}{s},\quad i+j\text{ odd}\end{cases}
\end{align}
Conversely, from a solution to \eqref{eq:cmclax} we can recover $\alpha$ from the fact that $\pm \ci\alpha^1,\pm \ci\alpha^3$ are the roots of $\det L$. They can be chosen such that \eqref{eq:K+label} holds. The values of $s$ can be recovered from \eqref{eq:cmcentries} and yield a $C^{+}$ lattice.
\end{proof}

Therefore, by Proposition~\ref{dpwprop} we can construct CMC nets by uniquely extending a 2D cross-ratio system, in other words, a discrete holomorphic map. Constructing surfaces and harmonic maps from holomorphic maps is usually achieved with the DPW method~\cite{dorfmeister1998weierstrass}. Our method appears to be equivalent to the discrete DPW method for CMC nets given in~\cite{ogata2017construction,discreteDPW}. Understanding the DPW method in terms of a cross-ratio evolution on a 4D lattice yields new insight into this topic. Also, the application of our version of this method seems to be much simpler and more constructive then previous versions. Instead of explicit re-splitting of the matrices we simply evolve the holomorphic map directly into a 4D cross-ratio system.

\subsection{Circular nets with constant negative Gaussian curvature}
\label{sec:cknets}

Discrete circular nets of constant negative Gaussian curvature (cK-nets) have been introduced in~\cite{cKnets} as circular nets which are diagonal nets of 4D K-nets.

A Lax representation is given by products of the K-net Lax matrices \eqref{eq:classicKlax} and can be written as
\begin{align}
\label{eq:cklax}
L&=\begin{pmatrix}
\frac{1}{t_{(1)}}\frac{l}{r}+t_{(1)}lr_1 & \ci(\lambda-\frac{rr_1}{\lambda}) \\
\ci(\lambda-\frac{1}{\lambda rr_1}) & \frac{1}{t_{(1)}}\frac{r}{l}+t_{(1)}\frac{1}{lr_1}
\end{pmatrix},\nonumber\\
M&=\begin{pmatrix}
\frac{1}{t_{(2)}}\frac{m}{r}+t_{(2)}mr_2 & \ci(\lambda-\frac{rr_2}{\lambda}) \\
\ci(\lambda-\frac{1}{\lambda rr_2}) & \frac{1}{t_{(2)}}\frac{r}{m}+t_{(2)}\frac{1}{mr_2}
\end{pmatrix},
\end{align}
where $r\in\mathbb{S}^1$ is a function on vertices, $t_{(1)},t_{(2)}\in\mathbb{R}$ are functions on edges with have the labelling property and $l,m\in\mathbb{C}$ are functions on edges. If $L$ and $M$ fulfill $M_1L=L_2M$ they define a frame $\Phi$ by $\Phi_1=L\Phi$, $\Phi_2=M\Phi$. With $\lambda=e^{t}$ this defines a family of nets by $f_t=\Phi^{-1}\Phi'$. Then, $f_0$ is a cK-net and the members of its family still have constant negative Gaussian curvature.

As multidimensional K-nets they are described in terms of parallelogram nets. Since these multidimensional nets have quaternions along all edges, they do not belong to the $C^-$ lattices discussed in section \ref{sec:4D}.

However, in~\cite{cKnets} it is also shown that the description \eqref{eq:cklax} can be extended to the case where $t_{(i)}$ is not real but unitary. Then, $f_t$ still constitutes a family of constant negative curvature nets. This description does not factorize into a K-net anymore. Instead, it factorizes into a $C^{-}$ lattice:

\begin{theorem}
The Lax representations \eqref{eq:cklax} for constant negative Gaussian curvature nets for unitary $t_{(i)}$ and \eqref{eq:4DLMLaxMatrices} for the $C^{-}$ lattice for unitary $\alpha^i$ are gauge equivalent.
\end{theorem}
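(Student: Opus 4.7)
The plan is to mirror the preceding theorem's proof, which treated the analogous $C^+$/CMC correspondence, and to work in a single lattice direction of the diagonal net $D$, the other being identical.

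The starting point is the explicit form \eqref{eq:explicitL} of the Lax matrix $L$ of the $C^-$ lattice. The $C^-$ conditions \eqref{eq:dpwcond}, \eqref{eq:K-label} place $s,s_{13}\in\mathbb{S}^1$, and unitarity of $\alpha^1$ combined with $\alpha^1=\bar\alpha^3$ forces $\alpha^3=1/\alpha^1$ and $\alpha^1\alpha^3=1$. Substituting these into \eqref{eq:explicitL} simplifies the diagonal entries to $\lambda^2-(s_{13}/s)\lambda^{-2}$ and $\lambda^2-(s/s_{13})\lambda^{-2}$, while the off-diagonals remain $\lambda$-independent.

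To move the $\lambda$-dependence from the diagonal to the off-diagonal (where \eqref{eq:cklax} carries it) I apply the same alternating gauge $G(i,j,i,j)=\bm{j}^{i+j}$ used in the $C^+$ proof, followed by the reparametrization $\lambda^2\to\lambda$ and, if needed, a scalar rescaling analogous to the $\sqrt{s/s_{13}}$ factor appearing there. Left-multiplication by $\bm{j}$ (respectively right-multiplication by $\bm{j}^{-1}$) swaps entries so that diagonal and off-diagonal exchange roles, producing a matrix whose diagonal is $\lambda$-independent and whose off-diagonal takes the form $c(\lambda-d/\lambda)$.

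Then I match term-by-term with \eqref{eq:cklax}: set $r$ equal to $s$ (or a parity-dependent $\hat s$), identify $t_{(1)}$ with $\alpha^1$, and read off $l$ from the surviving diagonal expression containing $\alpha^1 s+(\alpha^1)^{-1}s_{13}$. The identity \eqref{eq:offdiagonal} is the key input ensuring the off-diagonal factors emerge precisely as $rr_1$ and $1/(rr_1)$. For the converse, given a solution of \eqref{eq:cklax} with unitary $t_{(i)}$ and $r\in\mathbb{S}^1$, I invert these assignments — $\alpha^1=t_{(1)}$, $s=r$ up to the gauge — and verify that \eqref{eq:K-label} and \eqref{eq:dpwcond} hold by construction.

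The main obstacle is purely computational bookkeeping: choosing the right combination of diagonal gauge, $\bm{j}^{i+j}$ gauge, reparametrization and rescaling so that the constants in the off-diagonal match $rr_1$ and $(rr_1)^{-1}$ exactly. Conceptually nothing is new beyond the $C^+$ proof; the decisive algebraic input is again \eqref{eq:offdiagonal}, now combined with $\alpha^1\alpha^3=1$ forced by unitarity.
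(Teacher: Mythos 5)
Your proposal follows essentially the same route as the paper: gauge the explicit $C^-$ Lax matrix \eqref{eq:explicitL} by an alternating off-diagonal quaternion power in $i+j$, reparametrize $\lambda^2\to\lambda$, use $\alpha^1\alpha^3=1$ (from unitarity and \eqref{eq:K-label}) together with \eqref{eq:offdiagonal} to match the entries of \eqref{eq:cklax}, and run the same gauge backwards for the converse. The only discrepancy is the bookkeeping you already flagged: the paper gauges with $\bm i^{\,i+j}$ rather than $\bm j^{\,i+j}$ and needs no extra rescaling, which is what makes the explicit $\ci$ factors in the off-diagonal of \eqref{eq:cklax} appear automatically (with $\bm j$ you would need an additional constant \emph{diagonal} gauge, not a scalar rescaling, to fix both off-diagonal signs), and the resulting identification is $t_{(1)}=1/\alpha^1=\alpha^3$ with $r=s^{\pm1}$ and $l=(\ci s_1)^{\mp1}$ according to the parity of $i+j$.
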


Again, this implies that the associated family \eqref{eq:assoedge} of the $C^{-}$ lattice agrees with the cK-net family.

\begin{proof}
We only consider one lattice direction.

Gauging the $C^{-}$ Lax matrix $L$ \eqref{eq:explicitL} with $G(i,j,i,j)=\bm i^{i+j}$ and $\lambda^2\to\lambda$ yields that $G_{13}LG^{-1}$ is of the form \eqref{eq:cklax} with
\begin{align*}
t_{(1)}=\frac{1}{\alpha^1}=\alpha^3,\quad r=\begin{cases}s, &\text{for even }i+j,\\
\frac1s, &\text{for odd }i+j\end{cases},\qquad
l=\begin{cases}\frac{1}{is_1}, &\text{for even }i+j,\\
is_1 &\text{for odd }i+j\end{cases}.
\end{align*}
For the converse, we can apply the same gauge backwards.
\end{proof}

In~\cite{cKnets} these matrices for unitary $t_{(i)}$ are used to construct special transformations of surfaces, so called \emph{breather} solutions. Proposition \ref{dpwprop} allows us to construct lattices with these edges from given 2D cross ratio systems. This means that we can construct lattices of surfaces where neighboring surfaces are related by breather transformations. Figure~\ref{fig:breather} (left) shows a lattice of transformations of the straight line generated by the square grid as discrete holomorphic map.

\begin{figure}[h!]
  \center
  \includegraphics[width=0.65\textwidth]{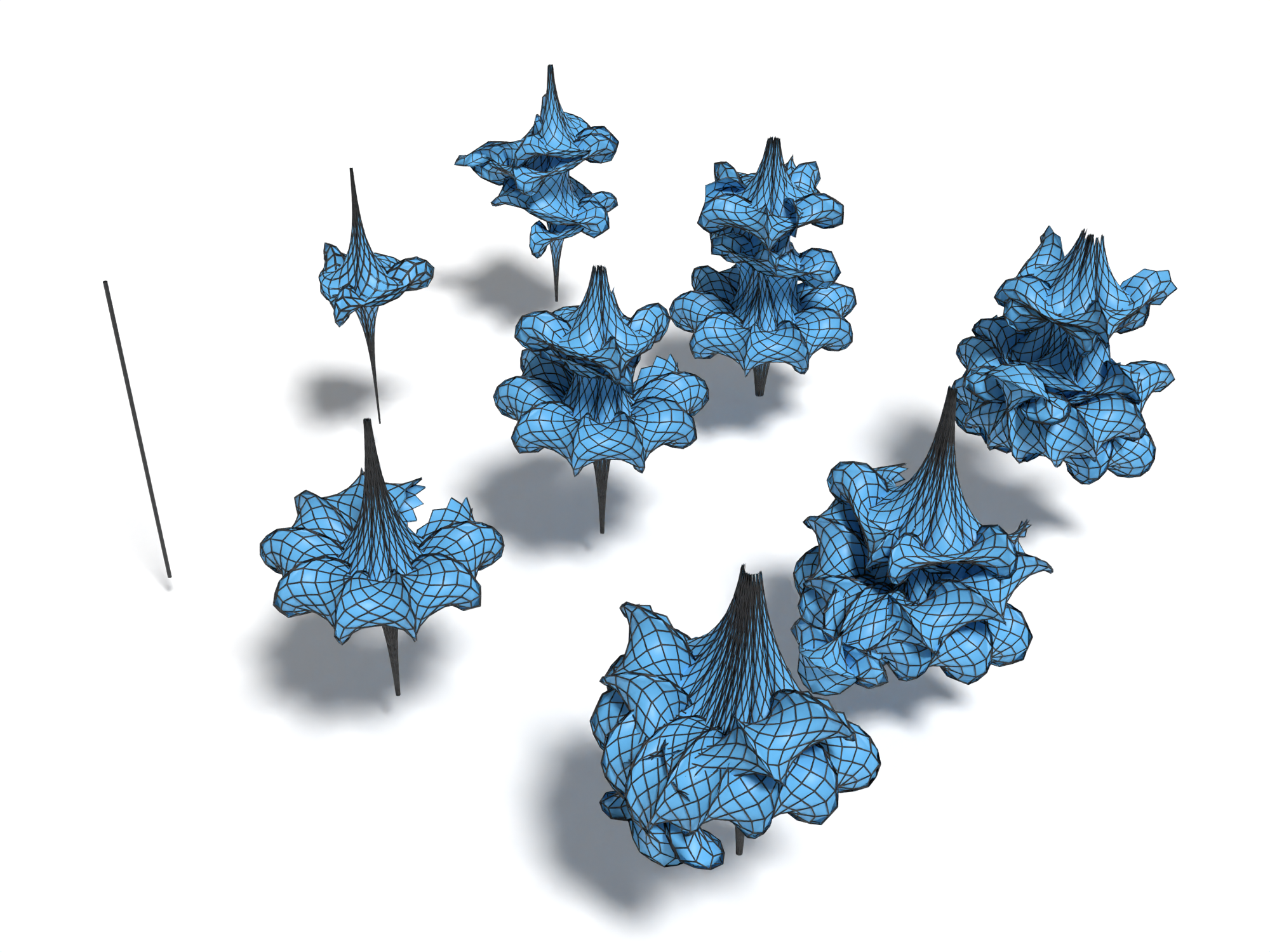}
  \hspace{10mm}
  \includegraphics[width=0.24\textwidth]{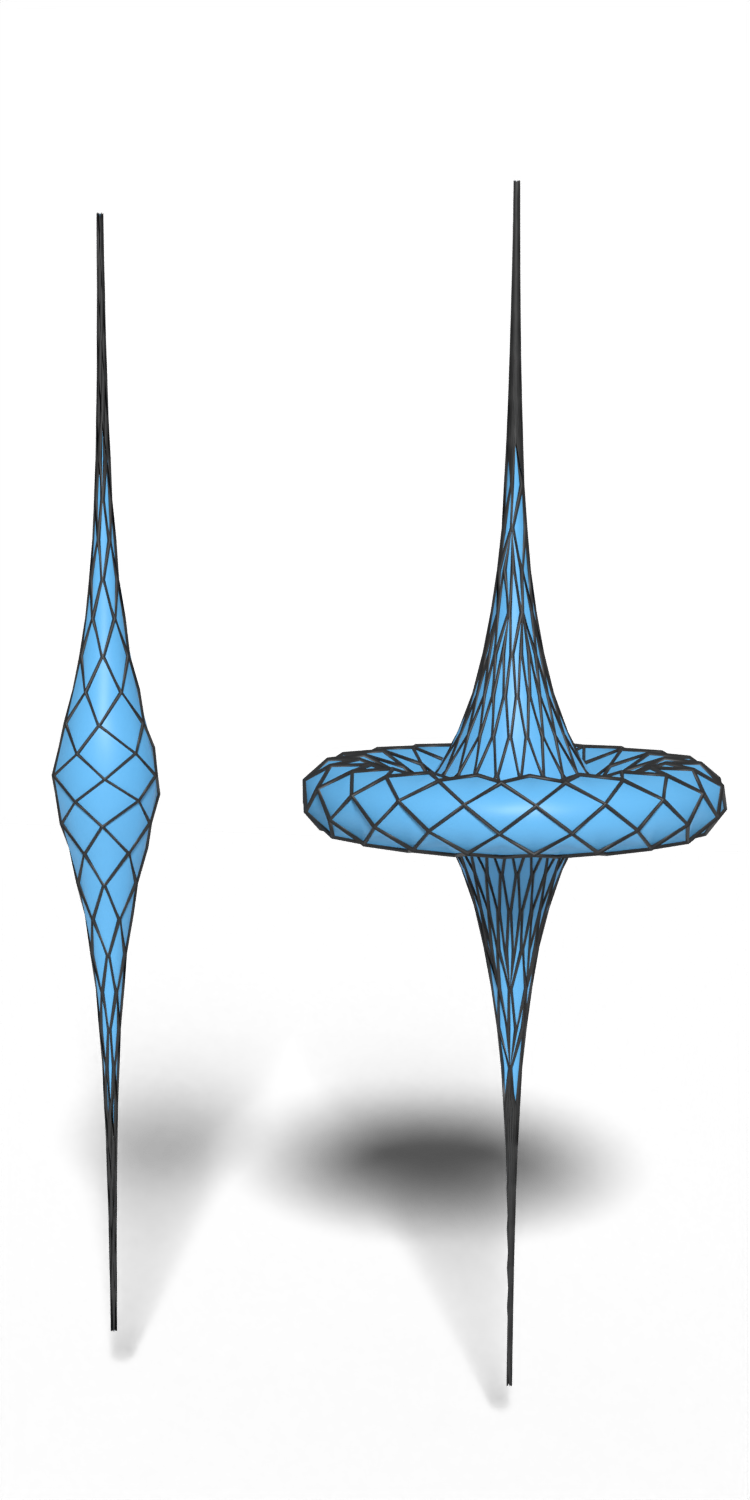}
  \caption{Left: A lattice of breather transformations. Right: Factors of a breather transformation.}
  \label{fig:breather}
\end{figure}

Finally, note that in the $C^{-}$ lattice $L$ as in \eqref{eq:explicitL} is a quaternionic polynomial in the real parameter $\lambda$. Besides the factorization into cross-ratio edges presented here, we know from Theorem \ref{thm:facquat} that there must also be a quaternionic factorization of $L$. We saw the phenomenon of these different factorizations in Example \ref{ex:twoFacs}. An implication is that between a surface and its breather transformation there exist two real surfaces belonging to the two possible quaternionic factorizations. These surfaces are parallelogram nets and their geometric properties are still being investigated. As far as we know, this is the first time one has been able to factorize a breather transformation into real solutions, albeit with different surface geometry. In figure \ref{fig:breather} (right) we see the two real surfaces that combinatorially lie between the straight line and a breather transform.

\section{Other algebras and the Moutard equation}
\label{sec:tnets}

The examples studied in detail throughout this paper belong to the quaternions $\mathbb{H}$ or to $\mathbb{C}^{2\times2}$. We briefly mentioned examples in other algebras like isothermic surfaces in $\mathbb{H}^{2\times2}$~\cite{hertrich2000transformations,bobenko1996discrete}, motions of linkages in terms of dual quaternions~\cite{hegedus2013factorization} or the linear Lax representation of affine spheres~\cite{discreteAffineSpheres} in $\mathbb{R}^{3\times3}$.
Often, geometry outside the quaternions can be described by some sort of algebraic anti-parallelogram. We will make this explicit for Moutard nets in quadrics, which describe various geometric systems such as isothermic nets~\cite{isothermicInSphereGeometriesMoutard,doliwa2007generalized,schief2001isothermic}. We describe these nets in terms of parallelogram nets in a Clifford algebra and explain how one can obtain a geometric associated family.

We consider nets in the space $\mathbb{R}^{p,q}$ which is $\mathbb{R}^{p+q}$ equipped with the inner product
\begin{align*}
\langle x,y\rangle=x_1y_1+\cdots +x_p y_p-x_{p+1}y_{p+1}-\cdots-x_{p+q}y_{p+q}.
\end{align*}
A net $f:\mathbb{Z}^n\to\mathbb{R}^{p,q}$ is in a quadric if $\langle f,f\rangle=\kappa$ for some global constant $\kappa\in\mathbb{R}$.

\begin{definition}
A map $f:\mathbb{Z}^n\to\mathbb{R}^{p,q}$ is called a Moutard net if its quadrilaterals fulfill a Moutard equation, i.e., if there exists $a:\{\text{faces of }\mathbb{Z}^n\}\to\mathbb{R}$, such that for all $1\leq i< j\leq n$
\begin{align}
\label{eq:moutard}
f_{ij}-f=a_{ij}(f_j-f_i).
\end{align}
\end{definition}
This just means that the diagonals of each quadrilateral are parallel. For simplicity, we will only consider nets where neither diagonal vanishes.

Now, consider a Moutard net in a quadric. Plugging \eqref{eq:moutard} into $\kappa=\langle f_{ij},f_{ij}\rangle$ yields
\begin{align}
\label{eq:moutardAij}
a_{ij}(\kappa-\langle f_i,f_j\rangle)=\langle f,f_i-f_j\rangle.
\end{align}
Hence, generically, the fourth point $f_{ij}$ of every quad is determined by the other three points by
\begin{align}
\label{eq:moutardquadric}
f_{ij}=f+\frac{\langle f,f_i-f_j\rangle}{\kappa-\langle f_i,f_j\rangle}(f_j-f_i).
\end{align}
One can verify that $\langle f,f_i\rangle$ has the labelling property and we indeed have anti-parallelograms.

For an algebraic description we turn to the clifford algebra $Cl_{p,q}(\mathbb{R})$ which is the algebra generated from the standard basis $e_1,...,e_{p+q}$ and the following relations:
\begin{align*}
e_ie_j=-e_je_i,\qquad e_i^2=-\langle e_i,e_i\rangle.
\end{align*}
Clifford algebras are graded algebras. For example, scalars have grade zero and elements of the vector space $\mathbb{R}^{p,q}$ have grade one. Two such vectors $a,b\in \mathbb{R}^{p,q}$ fulfill
\begin{align*}
-a^2=\langle a,a\rangle,\quad ab+ba=-2\langle a,b\rangle.
\end{align*}

\begin{lemma}
A net $f$ in a quadric is a Moutard net if and only if
\begin{align}
\label{eq:moutardClifford}
(f_{ij}-f)(f_j-f_i)=2\langle f,f_j-f_i\rangle.
\end{align}
\end{lemma}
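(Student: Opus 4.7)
The plan is to prove both implications using the decomposition
$ab = -\langle a, b\rangle + a\wedge b$
of the Clifford product of two grade-one elements $a,b\in\mathbb{R}^{p,q}\subset Cl_{p,q}(\mathbb{R})$, where $a\wedge b = \tfrac{1}{2}(ab - ba)$ is a bivector that vanishes if and only if $a$ and $b$ are linearly dependent.

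For the forward direction, I would start from the Moutard relation $f_{ij} - f = a_{ij}(f_j - f_i)$ and compute $(f_{ij} - f)(f_j - f_i) = a_{ij} (f_j - f_i)^2 = -a_{ij} \langle f_j - f_i, f_j - f_i\rangle$, using the Clifford identity $-x^2 = \langle x, x\rangle$ for grade-one $x$. Since $f$ lies in the quadric, $\langle f_i, f_i\rangle = \langle f_j, f_j\rangle = \kappa$, so expanding gives $\langle f_j - f_i, f_j - f_i\rangle = 2(\kappa - \langle f_i, f_j\rangle)$. Substituting the explicit expression \eqref{eq:moutardAij} for $a_{ij}$ collapses the right-hand side to $-2\langle f, f_i - f_j\rangle = 2\langle f, f_j - f_i\rangle$, which is precisely \eqref{eq:moutardClifford}.

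For the reverse direction, assume \eqref{eq:moutardClifford} and note that its right-hand side is a scalar. Hence the Clifford product $(f_{ij} - f)(f_j - f_i)$ lies entirely in the grade-zero component of $Cl_{p,q}(\mathbb{R})$, so by the decomposition above the bivector part $(f_{ij} - f)\wedge(f_j - f_i)$ must vanish. Since by assumption the diagonal $f_j - f_i$ is nonzero, this forces $f_{ij} - f$ to be a scalar multiple of $f_j - f_i$; that is, there exists a scalar $a_{ij}$ with $f_{ij} - f = a_{ij}(f_j - f_i)$, which is the Moutard equation.

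The main obstacle I anticipate is the reverse direction: one must argue cleanly that a scalar-valued Clifford product of two grade-one elements forces their parallelism. This relies on the grading of $Cl_{p,q}(\mathbb{R})$ and the identification of the bivector part of $ab$ with the exterior product $a\wedge b$, which should be stated explicitly. The forward direction, by contrast, is a routine chain of identities once the value of $a_{ij}$ from \eqref{eq:moutardAij} is invoked and the quadric condition is used to simplify $\langle f_j - f_i, f_j - f_i\rangle$.
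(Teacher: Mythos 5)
Your proof is correct and follows essentially the same route as the paper: the forward direction is the same chain of identities using \eqref{eq:moutard}, \eqref{eq:moutardAij} and $-(f_j-f_i)^2=2(\kappa-\langle f_i,f_j\rangle)$, and the reverse direction is the paper's ``scalar product of diagonals implies parallel diagonals'' argument, which you merely spell out via the grade decomposition $ab=-\langle a,b\rangle+a\wedge b$.
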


\begin{proof}
If $f$ is a Moutard net we get this equation from \eqref{eq:moutard}, \eqref{eq:moutardAij} and $-(f_j-f_i)^2=2(\kappa-\langle f_i,f_j\rangle)$.
Conversely, if the product of the diagonals of $f$ is a scalar the diagonals are parallel and $f$ is a Moutard net.
\end{proof}

Now, we describe the Moutard net as a parallelogram net.

\begin{proposition}
A net $f$ in a quadric of $\mathbb{R}^{p,q}$ is a Moutard net if and only if the edge based net $p$ defined by $p^i=f_i-f$ is a parallelogram net in $Cl_{p,q}(\mathbb{R})$.
\end{proposition}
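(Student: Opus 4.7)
The plan is to show that, under the standing assumption that $f$ takes values in the quadric $\{\langle x,x\rangle=\kappa\}$, the multiplicative parallelogram equation for $p^i=f_i-f$ reduces exactly to the Clifford form \eqref{eq:moutardClifford} of the Moutard condition, so the preceding lemma closes the argument.

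First I would dispose of the additive equation \eqref{eq:pareqadd}. With $p^i=f_i-f$, $p^j_i=f_{ij}-f_i$, $p^i_j=f_{ij}-f_j$, both sides of $p^i+p^j_i=p^j+p^i_j$ telescope to $f_{ij}-f$, so the additive equation is automatic for any vertex-based $f$. Thus everything rests on the multiplicative equation.

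Next, set $A:=f_i-f$, $B:=f_j-f$, $C:=f_{ij}-f$. Then $p^j_i p^i=(C-A)A$ and $p^i_j p^j=(C-B)B$, so \eqref{eq:pareqmult} is equivalent to
\[
C(A-B)=A^{2}-B^{2}.
\]
Using $v^{2}=-\langle v,v\rangle$ for vectors $v\in\mathbb{R}^{p,q}\subset Cl_{p,q}(\mathbb{R})$ together with $\langle f,f\rangle=\langle f_i,f_i\rangle=\langle f_j,f_j\rangle=\kappa$, I compute
\[
A^{2}=-\langle f_i-f,f_i-f\rangle=2\langle f,f_i\rangle-2\kappa,
\]
and likewise $B^{2}=2\langle f,f_j\rangle-2\kappa$, whence $A^{2}-B^{2}=-2\langle f,f_j-f_i\rangle$. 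Since $C(A-B)=-C(B-A)$, the multiplicative parallelogram equation is equivalent to
\[
(f_{ij}-f)(f_j-f_i)=2\langle f,f_j-f_i\rangle,
\]
which is precisely \eqref{eq:moutardClifford}. Because all four points of each quad are assumed to lie on the quadric, the preceding lemma applies directly in both directions, giving Moutard $\iff$ parallelogram.

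I do not foresee a real obstacle here: the only subtlety is bookkeeping signs in the Clifford algebra (the minus sign in $v^{2}=-\langle v,v\rangle$ and the asymmetry between $A-B$ and $B-A$). The genuinely substantive input—that the scalar product of the diagonals encodes the Moutard-in-quadric condition—has already been packaged in the lemma, so the proposition is essentially an expansion of $(C-A)A=(C-B)B$ together with the quadric identity for $A^{2}-B^{2}$.
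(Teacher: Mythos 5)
Your proposal is correct and follows essentially the same route as the paper: the additive equation holds by telescoping, and the multiplicative equation is shown to be equivalent to the Clifford form $(f_{ij}-f)(f_j-f_i)=2\langle f,f_j-f_i\rangle$ of the Moutard condition using the quadric constraint, after which the preceding lemma finishes the argument. The only difference is cosmetic bookkeeping ($C(A-B)=A^2-B^2$ with $A^2=2\langle f,f_i\rangle-2\kappa$ versus the paper's direct expansion using $f_i^2=f_j^2=-\kappa$ and $ab+ba=-2\langle a,b\rangle$), and your sign computations check out.
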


\begin{proof}
While the additive equation \eqref{eq:pareqadd} holds by construction, the multiplicative equation \eqref{eq:pareqmult} reads
\begin{align*}
0&=p^i_jp^j-p^j_ip^i=(f_{ij}-f_j)(f_j-f)-(f_{ij}-f_i)(f_i-f)\\
&=f_{ij}(f_j-f_i)+(f_j-f_i)f=(f_{ij}-f)(f_j-f_i)-2\langle f,f_j-f_i\rangle
\end{align*}
and is equivalent to \eqref{eq:moutardClifford}. Here, we used $f_i^2=f_j^2=-\kappa$.
\end{proof}

The Lipschitz group is the group of products $a=a_1\cdots a_k\in Cl_{p,q}(\mathbb{R})$ of invertible vectors $a_i\in\mathbb{R}^{p,q}$. These are exactly the elements of the algebra which preserve the grade of a vector $b$ under the conjugation $b\mapsto aba^{-1}$. Any element of the Lipschitz group is either graded even or odd, depending on the number of vectors in the product.

Since $p^i=f_i-f$ is of grade one the Lax representation $\lambda+\mu p^i$ is not in the Lipschitz group. The associated family \eqref{eq:assoedge} does not preserve the grade and $p^i_t$ is not in the vector space which makes geometric interpretation difficult. However, we can also assign a parallelogram net to a Moutard net where $p^i$ is a bi-vector, i.e., an element of grade two.

\begin{proposition}
\label{prop:prodLax}
A net $f$ in a quadric of $\mathbb{R}^{p,q}$ is a Moutard net if and only if the edge based net $p$ defined by $p^i=f_if$ is a parallelogram net in $Cl_{p,q}(\mathbb{R})$.
\end{proposition}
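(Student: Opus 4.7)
The plan is to verify the two parallelogram equations for $p^i = f_i f$ directly, exploiting two Clifford-algebraic facts: the quadric condition gives $f_i^2 = -\langle f_i,f_i\rangle = -\kappa$ for every vertex, and for vectors $a,b \in \mathbb{R}^{p,q}$ one has $ab + ba = -2\langle a,b\rangle$, with $ab$ a pure scalar iff $a$ and $b$ are parallel.

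First I would dispose of the multiplicative equation. Substituting $p^j_i = f_{ij} f_i$, $p^i = f_i f$, $p^i_j = f_{ij} f_j$, $p^j = f_j f$ and using $f_i^2 = f_j^2 = -\kappa$ yields
\begin{align*}
p^j_i p^i = f_{ij}\, f_i^2\, f = -\kappa\, f_{ij} f = f_{ij}\, f_j^2\, f = p^i_j p^j,
\end{align*}
so on a net in a quadric this equation is automatic and carries no information about the Moutard condition. All the geometry will therefore be encoded in the additive equation.

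Next I would show that the additive equation $p^i + p^j_i = p^j + p^i_j$ is equivalent to \eqref{eq:moutard}. Rearranging, this equation reads $(f_i - f_j) f = f_{ij}(f_j - f_i)$, which I would rewrite as
\begin{align*}
(f_{ij} - f)(f_i - f_j) = -(f_i - f_j) f - f(f_i - f_j) = 2\langle f, f_i - f_j\rangle,
\end{align*}
invoking the Clifford identity on the vectors $f$ and $f_i - f_j$. The right-hand side is a scalar, so the product of the two vectors $f_{ij} - f$ and $f_i - f_j$ is scalar. By the parallelism criterion above, this forces the existence of a scalar $a_{ij}$ with $f_{ij} - f = a_{ij}(f_j - f_i)$, which is exactly the Moutard equation; the converse direction reverses the same chain of equalities.

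I do not expect a serious obstacle here. The only point requiring mild care is the parallelism criterion $ab \in \mathbb{R} \Leftrightarrow a \parallel b$, which I would justify briefly via the decomposition $ab = -\langle a,b\rangle + a\wedge b$ into its grade-$0$ and grade-$2$ components. Because the quadric hypothesis is built into the very definition of the statement, no additional nondegeneracy assumption is needed beyond those already stipulated (nonvanishing diagonals), and the proof closes cleanly in a few lines.
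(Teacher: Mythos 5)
Your handling of the multiplicative equation and your reduction of the additive equation to the scalar identity $(f_{ij}-f)(f_i-f_j)=2\langle f,f_i-f_j\rangle$ coincide with the paper's proof, which reaches the same identity (its equation \eqref{eq:moutardClifford}, with both sides negated) by computing $p^i_j+p^j=(f_{ij}-f)f_j-2\langle f,f_j\rangle$ and $p^j_i+p^i=(f_{ij}-f)f_i-2\langle f,f_i\rangle$ and subtracting. Your forward direction is complete: the identity forces the product of the two diagonal vectors to be a scalar, the grade decomposition gives parallelism, and \eqref{eq:moutard} follows.

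The converse, however, is not obtained by ``reversing the same chain of equalities,'' and this is a genuine gap as written. Read backwards, the Moutard equation $f_{ij}-f=a_{ij}(f_j-f_i)$ only gives that $(f_{ij}-f)(f_i-f_j)$ equals \emph{some} scalar, namely $-a_{ij}(f_i-f_j)^2=2a_{ij}(\kappa-\langle f_i,f_j\rangle)$; it does not by itself produce the \emph{specific} scalar $2\langle f,f_i-f_j\rangle$ to which the additive equation is equivalent. The step from ``the product is a scalar'' back to ``the product is that scalar'' is exactly where the reversal breaks. The missing input is the quadric condition at the fourth vertex: expanding $\kappa=\langle f_{ij},f_{ij}\rangle$ with $f_{ij}=f+a_{ij}(f_j-f_i)$ and using $\langle f,f\rangle=\langle f_i,f_i\rangle=\langle f_j,f_j\rangle=\kappa$ yields $a_{ij}(\kappa-\langle f_i,f_j\rangle)=\langle f,f_i-f_j\rangle$ (the paper's \eqref{eq:moutardAij}), which pins the scalar down and closes the implication. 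This is precisely the forward direction of the Lemma that precedes the proposition in the paper; once you insert that short computation (or cite the Lemma), your proof is complete and is otherwise the same argument as the paper's.
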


If $\kappa\neq0$ every $f$ is invertible and $f$ can be seen as primitive map of $p$ in the sense of Remark \ref{rem:multPrimitiveMap}.

\begin{proof}
The multiplicative equation \eqref{eq:pareqmult} always holds, since
\begin{align*}
p^j_ip^i&=f_{ij}f_if_if=-\kappa f_{ij}f=f_{ij}f_jf_jf=p^i_jp^j
\end{align*}
For the additive equation \eqref{eq:pareqadd} consider
\begin{align*}
p^i_j+p^j&=f_{ij}f_j+f_jf=(f_{ij}-f)f_j-2\langle f,f_j\rangle\\
p^j_i+p^i&=f_{ij}f_i+f_if=(f_{ij}-f)f_i-2\langle f,f_i\rangle.
\end{align*}
Hence, \eqref{eq:pareqadd} is equivalent to \eqref{eq:moutardClifford}.
\end{proof}

Now, the Lax representation $\lambda+\mu p^i$ is a bi-vector in the Lipschitz group. We can consider the edge values $p^i_t$ in the associated family \eqref{eq:assoedge}. The frame $\Phi$ is in the Lipschitz group group and, hence, conjugation with $\Phi$ preserves the grade. Then, $p^i_t=\Phi^{-1}(P^i)^{-1}(P^i)'\Phi$ is indeed a bi-vector again. As corresponding vertex based net we define the vector valued map $f_t:=\Phi^{-1}f\Phi$.

\begin{proposition}
Let $f$ be a Moutard net in a quadric and let $p$ be the parallelogram net defined by $p^i=f_if$ with a frame $\Phi$. Then, the family of nets defined by $f_t:=\Phi^{-1}f\Phi$ is a family of Moutard nets in the same quadric as $f$.
\end{proposition}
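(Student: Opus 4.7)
The plan is to apply Proposition \ref{prop:prodLax} in the reverse direction to $f_t$: this reduces the task to verifying (i) that $f_t$ is a vector-valued map landing in the quadric $\langle\cdot,\cdot\rangle=\kappa$, and (ii) that the edge map $q^i:=(f_t)_i f_t$ is a parallelogram net in $Cl_{p,q}(\mathbb{R})$.

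Part (i) is immediate. Each Lax matrix $P^i=\lambda+\mu f_i f$ lies in the Lipschitz group (as noted in the paragraph preceding Proposition \ref{prop:prodLax}), so $\Phi$ does too, and conjugation by $\Phi$ preserves the grade of a vector. Hence $f_t=\Phi^{-1}f\Phi$ is a vector, and since $f^2=-\kappa$ is a scalar, $f_t^2=\Phi^{-1}f^2\Phi=-\kappa$, placing $f_t$ on the same quadric.

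For part (ii), the central computation uses $\Phi_i=P^i\Phi$ to expand
\begin{align*}
q^i = \Phi_i^{-1} f_i \Phi_i \cdot \Phi^{-1} f \Phi = \Phi_i^{-1}(f_i P^i f)\Phi.
\end{align*}
The identity $f_i P^i f = \lambda f_i f + \mu f_i^2 f^2 = \lambda p^i + \mu\kappa^2$, which depends crucially on $f^2=f_i^2=-\kappa$, recasts the inner factor as primarily equivalent parallelogram data. Substituting $\lambda p^i=(\lambda/\mu)(P^i-\lambda)$ and noting $\Phi_i^{-1}P^i\Phi=1$ then gives
\begin{align*}
q^i = \frac{\lambda}{\mu} + \left(\mu\kappa^2 - \frac{\lambda^2}{\mu}\right) S^i, \qquad S^i := \Phi_i^{-1}\Phi,
\end{align*}
so $q^i$ is primarily equivalent (by scaling and adding a scalar) to the backward-shift net $S^i$.

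It therefore suffices to check that $S^i$ is a parallelogram net. The multiplicative equation is automatic from $S^j_i S^i = \Phi_{ij}^{-1}\Phi = S^i_j S^j$, since $\Phi_{ij}$ is well-defined (which is precisely the compatibility of $P$). For the additive equation, collecting under the common conjugation by $\Phi$ reduces it to $(P^j_i P^i)^{-1}(P^j_i+P^i) = (P^i_j P^j)^{-1}(P^i_j+P^j)$, which follows immediately from the parallelogram equations for $P^i$. Proposition \ref{prop:prodLax} applied to $f_t$ then closes the argument. The single moment of insight is spotting the collapse $f_i P^i f = \lambda p^i + \mu\kappa^2$; once the two neighbouring vectors are seen to square to the same scalar, everything telescopes and the rest is bookkeeping.
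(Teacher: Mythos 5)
Your proof is correct, and its core is the same as the paper's: both hinge on the identity $f_iP^if=\lambda f_if+\mu f_i^2f^2=\lambda p^i+\mu\kappa^2$, which turns $q^i=(f_t)_if_t$ into $\Phi_i^{-1}(\mu\kappa^2+\lambda f_if)\Phi$. Where you diverge is in how you recognize this as parallelogram data. The paper expresses $\mu\kappa^2+\lambda f_if$ in the basis $\{P^i,(P^i)'\}$ (possible since $\lambda'\mu-\lambda\mu'\neq0$ is a standing assumption), so that $q^i_t=r+s\,p^i_t$ with $p_t$ the associated family, with $r=\frac{\lambda\lambda'-\mu\mu'\kappa^2}{\lambda'\mu-\lambda\mu'}$ and $s=\frac{\mu^2\kappa^2-\lambda^2}{\lambda'\mu-\lambda\mu'}$, and then invokes Proposition \ref{thm:assofamily1}. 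You instead use the basis $\{1,P^i\}$, arriving at $q^i=\frac{\lambda}{\mu}+(\mu\kappa^2-\frac{\lambda^2}{\mu})\Phi_i^{-1}\Phi$, and verify directly that the backward-transport net $S^i=\Phi_i^{-1}\Phi$ satisfies both parallelogram equations --- a clean, self-contained check (your reduction of the additive equation to $(P^j_iP^i)^{-1}(P^j_i+P^i)=(P^i_jP^j)^{-1}(P^i_j+P^j)$ is correct) that avoids routing through the associated family. The trade-offs are minor: your route needs $\mu\neq0$ (for $\mu=0$ one has $\Phi_i=\lambda\Phi$ and $q^i=\Phi^{-1}f_if\Phi$ is a conjugate of $p$, so that case is immediate), and both routes tacitly need the coefficient of the non-constant term to be nonzero in order to speak of primary equivalence ($s\neq0$ for the paper, $\mu^2\kappa^2\neq\lambda^2$ for you); when it vanishes, $q^i$ is a constant scalar and satisfies the parallelogram equations trivially anyway. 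Part (i) matches the paper exactly.
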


\begin{proof}
Since $\Phi$ is in the Lipschitz group $f_t$ is of grade one, i.e., in the vector space $\mathbb{R}^{p,q}$. Also it is in the same quadric as $f$.
We will show that the edge based net $q_t$ defined by $q^i_t=(f_t)_if_t$ is a parallelogram net. Then, by Proposition \ref{prop:prodLax}, $f_t$ is a Moutard net. Specifically, it will turn out that $q_t$ is of the form $q^i_t=r+sp^i_t$ where $p_t$ is in the associated family of $p$ and $s,r\in\mathbb{R}$ are globally constant. Then, $q_t$ is primarily equivalent to $p_t$ and, hence, also a parallelogram net.

We calculate
\begin{align*}
q^i_t=(f_t)_if_t=\Phi_i^{-1}f_i\Phi_i\Phi^{-1}f\Phi=\Phi_i^{-1}f_i(\lambda+\mu f_if)f\Phi=\Phi_i^{-1}(\mu\kappa^2+\lambda f_if)\Phi
\end{align*}
and
\begin{align*}
r+sp^i_t=r\Phi_i^{-1}P^i\Phi+s\Phi^{-1}_i(P^i)'\Phi=\Phi_i^{-1}((r\lambda+s\lambda')+(r\mu+s\mu')f_if)\Phi.
\end{align*}
These coincide if we choose $r=\frac{\lambda\lambda'-\mu\mu'\kappa^2}{\lambda'\mu-\lambda\mu'}$ and $s=\frac{\mu^2\kappa^2-\lambda^2}{\lambda'\mu-\lambda\mu'}$ . Note, that $r$ and $s$ are globally constant, but depend on the parameter $t$.
\end{proof}

We found a geometric associated family. Its application and geometric investigation will be the subject of future research.

\appendix

\section{Gauge equivalence of a CMC cube}
\label{sec:cmcCubeGauge}

To understand the connection between CMC nets and quaternionic parallelogram nets given in~\cite{hoffmann16}, consider a two-dimensional net $f:\mathbb{Z}^2\to \Im(\mathbb{H})$ combined with a second net $f_3:=f^*$ to be a two-layered three-dimensional net. From such a two-layered net, we can obtain a second two-layered net $g$ by a combinatoric flip: $g=f$ and $g_3=f_3$ if $k+l$ is even and $g=f_3$ and $g_3=f$ if $k+l$ is odd. Both nets consist of the same points but the edges of one net are the diagonals of the other. In Theorem 3.33 of~\cite{hoffmann16} we find a connection between isothermic CMC nets and parallelogram nets:

\begin{theorem}
If $f$ is an isothermic CMC net with dual $f^*$ the corresponding net $g$ is a zero-folded quaternionic parallelogram net.
\end{theorem}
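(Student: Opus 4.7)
The plan is to verify the three defining properties of a zero-folded quaternionic parallelogram net for the edge map $p = (p^1, p^2, p^3)$ with $p^i = g_i - g$, namely $\Re p^i = 0$ on every directed edge, the additive equation $p^i + p^j_i = p^j + p^i_j$ on every face, and the multiplicative equation $p^j_i p^i = p^i_j p^j$ on every face of $\mathbb{Z}^3$. The first two are automatic: since $f$ and $f^*$ both take values in $\Im(\mathbb{H})$, every edge $p^i = g_i - g$ lies in $\Im(\mathbb{H})$ and therefore has vanishing real part, and the additive equation is just telescoping for an edge map obtained as a difference of a vertex map.

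The content is then the multiplicative equation on each of the three types of faces of $g$. To handle the combinatorics, I would first fix notation so that at an even base $(k,l)$ one has $p^1 = f^*_1 - f$, $p^2 = f^*_2 - f$, $p^3 = f^* - f$, while at an odd base the roles of $f$ and $f^*$ are interchanged. Each face of $g$ then has its four vertices alternating between the $f$-layer and the $f^*$-layer, so every face of $g$ can be viewed as a rectangle built from two diagonals of a face of the original two-layered three-dimensional net $(f, f^*)$.

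On a horizontal $1$-$2$ face with even base the multiplicative equation reads
\[ (f_{12} - f^*_1)(f^*_1 - f) = (f_{12} - f^*_2)(f^*_2 - f), \]
which I would verify by inserting the Christoffel duality $f^*_i - f^* = \alpha^i (f_i - f)^{-1}$ together with the CMC distance constraint that $|f^* - f|$ is a global constant; the latter forces the real parts of the two sides to agree, while the real factorizing cross-ratio of the isothermic quad $(f, f_1, f_{12}, f_2)$ forces the imaginary parts to agree. On a mixed $1$-$3$ face the four vertices at an even base are $f, f^*_1, f^*, f_1$, and the multiplicative equation becomes
\[ (f^* - f^*_1)(f^*_1 - f) = (f^* - f_1)(f_1 - f), \]
which is equivalent to the statement that $(f, f_1, f^*_1, f^*)$ is an anti-parallelogram in $\Im(\mathbb{H})$, a known consequence of Christoffel duality combined with the CMC length condition. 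The $2$-$3$ face is symmetric.

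The main obstacle I anticipate is keeping the parity bookkeeping clean and ensuring that both parity classes produce the same multiplicative identity rather than two inequivalent ones. The natural remedy is to exploit the symmetry $f \leftrightarrow f^*$: the CMC-isothermic dual is involutive and the multiplicative condition is symmetric under swapping the two layers, so any identity verified at even parity automatically holds at odd parity. Once the three face identities are established in a single parity class, the parallelogram property holds globally and, combined with the already verified zero-folded condition, $g$ is a zero-folded quaternionic parallelogram net as claimed.
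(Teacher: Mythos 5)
There is an important mismatch of expectations here: the paper does not prove this statement at all. It is quoted verbatim as Theorem~3.33 of the cited reference \cite{hoffmann16}, with no proof environment following it, so there is no in-paper argument to compare against. Your proposal must therefore stand on its own, and while its skeleton is right (zero real parts and the additive equation are indeed immediate; all content sits in the multiplicative equation on the three face types of the flipped net $g$), it has two genuine gaps.

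First, your multiplicative equation on the mixed $1$-$3$ face is not the correct one. With the paper's flip, at an even base one has $g=f$, $g_1=f^*_1$, $g_3=f^*$ and $g_{13}=f_1$, so the equation $p^3_1p^1=p^1_3p^3$ reads
\begin{align*}
(f_1-f^*_1)(f^*_1-f)=(f_1-f^*)(f^*-f),
\end{align*}
i.e.\ the two edge-paths from $f$ to the opposite corner $f_1$. The equation you wrote, $(f^*-f^*_1)(f^*_1-f)=(f^*-f_1)(f_1-f)$, compares paths from $f$ to $f^*$; but $f$ and $f^*$ are \emph{adjacent} in the face of $g$ (joined by the vertical edge $p^3$), so your identity is the multiplicative condition for the third pairing of the four points, in which the vertical edges $ff^*$ and $f^*_1f_1$ play the role of diagonals. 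That pairing is a face of neither the original two-layered net nor of $g$, and for imaginary edges it is not equivalent to the correct equation (the diagonal-swap symmetry $qp=sr\Leftrightarrow sq=rp$, which does hold for zero-folded quads because $p^2$ is real, only exchanges the two diagonals of a fixed quad; it does not allow re-pairing the four points). Second, and more substantially, the verification of the multiplicative identities is only gestured at. On the horizontal $1$-$2$ face the real-part identity amounts to $|f_{12}-f^*_1|^2+|f^*_1-f|^2=|f_{12}-f^*_2|^2+|f^*_2-f|^2$, which compares diagonals of two \emph{different} side faces of the original net; this does not follow from the constant-distance condition $|f^*-f|=\mathrm{const}$ alone, as you assert, but needs the full interplay of Christoffel duality, the edge labelling, and the distance constraint. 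Since these computations are the entire content of the theorem, the proposal as it stands is an outline with one mis-stated target equation rather than a proof.
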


Note, that they even show that all CMC nets in the family yield equally folded parallelogram nets. However, since their notion of equally folded is slightly different from ours (see Remark \ref{rem:foldingparam}) we will only use the special case of isothermic and zero-folded nets.

\begin{figure}[h!]
  \centering
  \includegraphics[width=0.38\textwidth]{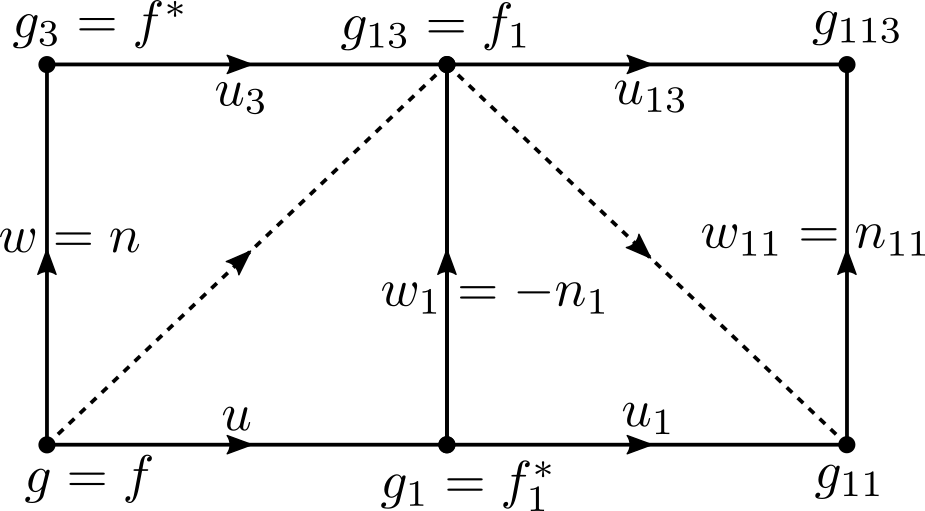}
  \hspace{7mm}
  \includegraphics[width=0.26\textwidth]{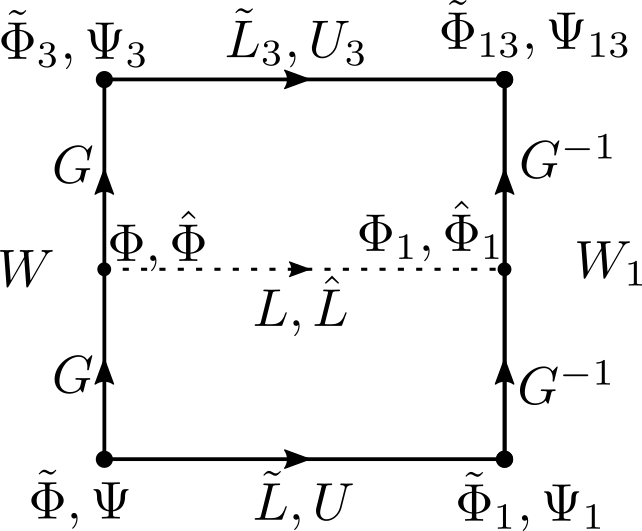}
  \caption{Left: Combinatorics of the two-layered parallelogram net $g$ and the corresponding CMC net $f$ at even $k+l$. Right: Combinatorics at even $k+l$ of the various frames and Lax matrices used in the proof.}
  \label{fig:flipcubes}
\end{figure}

Consider the Lax representation \eqref{eq:laxpair} (with $\lambda (t)=\cos (t),\mu (t)=\sin (t)$) for a zero-folded two-layered three-dimensional parallelogram net $p=(u,v,w)$ 
\begin{align}
\label{eq:cmcCubeLax}
U=\cos (t)+\sin (t)\, u,\quad V=\cos (t)+\sin (t)\, v,\quad W=\cos (t)+\sin (t)\, w.
\end{align}
From Theorems \ref{thm:zerofoldedrep} and \ref{thm:assofamily2} we know that nets in its associated family \eqref{eq:assoedge} are primarily equivalent to equally folded nets. We will assume the net to be scaled such that $w\in\mathbb{S}^2$.

\begin{theorem}
The representation \eqref{eq:cmcCubeLax} and the classic representation for CMC \eqref{eq:cmclax} are gauge equivalent.
\end{theorem}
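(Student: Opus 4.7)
My plan is to identify the CMC Lax pair \eqref{eq:cmclax} with a path-product of parallelogram Lax factors from \eqref{eq:cmcCubeLax}, following the combinatorial flip relating $g$ and $(f, f^*)$, and then compare entry-by-entry after an appropriate $t$-independent vertex gauge.

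First, I would translate the combinatorial flip into a Lax-level statement. Since $g = f$ and $g_3 = f^*$ at even $k+l$ (and swapped at odd $k+l$), an edge of the 2D CMC net $f$ from $f(x)$ to $f(x+e_i)$ for $i \in \{1, 2\}$ decomposes in the 3D lattice of $g$ as two segments: one step in direction $i$ (contributing $U$ or $V$) and one step in direction $3$ (contributing $W$), with the order and sign of the $W$-step flipping with the parity of $k+l$. Defining a 2D frame $\Psi$ by choosing the layer of $g$ that coincides with $f$ at each 2D vertex, the 2D Lax is $\tilde L = W_1 U$ (or $U W^{-1}$) and similarly $\tilde M = W_2 V$ (or $V W^{-1}$); the parity-dependent ambiguity between these forms is absorbed into a position-dependent relabelling within the gauge of step two.

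Second, I would apply a $t$-independent vertex gauge that aligns $w$ with the standard imaginary unit $\bm{k}$. Since $w \in \mathbb{S}^2$ satisfies $w^2 = -1$, there exists at each vertex a unit quaternion $H$ with $H w H^{-1} = \bm{k}$, and the resulting gauge $G$ can be propagated consistently along edges because the parallelogram evolution acts on $w$ by conjugation. Under this gauge, $W$ becomes diagonal in the complex matrix representation with entries $e^{\pm \ci t}$; setting $\lambda := e^{\ci t}$ produces exactly the spectral parameter of \eqref{eq:cmclax}. Concretely, the idempotent decomposition $W = \lambda^{-1} P_+ + \lambda P_-$ with $P_\pm = \tfrac{1}{2}(1 \pm \ci w)$ (valid because $(\ci w)^2 = 1$) sends $P_\pm$ to the standard diagonal projectors after the gauge.

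Third, I would expand $\tilde L = W_1 U$ in the gauged basis and compare with \eqref{eq:cmclax}. Decomposing $u$ into its $w_1$-parallel and $w_1$-transverse imaginary parts (the zero-folded hypothesis ensures $\Re u = 0$), the diagonal entries of the product collapse to quantities independent of $\lambda$, yielding $a$ and $\bar a$, while the off-diagonal entries should organize into the reciprocal form $\lambda b + (\lambda b)^{-1}$. The conjugate pairing between the $(1,2)$ and $(2,1)$ entries follows from the fact that $u$ is a quaternion, hence hermitian after conjugation by $\bm k$. An analogous computation for $\tilde M = W_2 V$ yields the entries $d, e$. Compatibility $L_2 M = M_1 L$ for the CMC pair is inherited from the compatibility of the parallelogram Lax over the relevant 3D cube, and since the gauge is $t$-independent, the associated families $\Phi^{-1}\Phi'$ coincide.

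The main obstacle will be verifying that the off-diagonal entries of the gauged $W_1 U$ organize precisely into the reciprocal form $\lambda b + (\lambda b)^{-1}$, rather than into two independent coefficients of $\lambda$ and $\lambda^{-1}$. This constraint encodes the isothermic primal-dual relationship hidden in the two-layered structure of $g$, and establishing it amounts to a careful but finite computation using $w_1^2 = -1$ together with the zero-folded condition. Once this reciprocal identity is established, the identification of $a, b, d, e$ with explicit expressions in $u, v, w$ completes the gauge equivalence and, in particular, shows that the associated family \eqref{eq:assoedge} of the parallelogram Lax agrees with the classical CMC family.
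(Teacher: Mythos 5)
Your overall strategy---realize the CMC Lax matrix as the transport along the diagonal of a $(1,3)$-quad of the flipped net $g$, gauge $w$ to $\bm k$ pointwise, and compare entries---is the right one and matches the converse direction of the paper's proof. But two of the steps you wave through are exactly where the content lies, and one of your premises is false. Writing $U=\tfrac{\lambda}{2}(1-\ci u)+\tfrac{1}{2\lambda}(1+\ci u)$ with $\lambda=e^{\ci t}$, the product $W_1U$ is a Laurent polynomial in $\lambda$ with exponents in $\{0,\pm2\}$ in every entry; no $t$-independent vertex gauge can turn this into the form \eqref{eq:cmclax}, whose off-diagonal entries have exponents $\{+1,-1\}$ and whose diagonal is constant. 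One needs, in addition, the $\lambda$-\emph{dependent} diagonal gauge $G=\operatorname{diag}(\sqrt\lambda,1/\sqrt\lambda)$ together with the reparametrization $\lambda^2\to\lambda$, exactly as in the paper; and because that gauge depends on $t$, the agreement of the Sym formulas is no longer automatic and must be checked separately (the paper verifies $g=\tilde\Phi^{-1}\tilde\Phi'$ by hand). So your closing inference "since the gauge is $t$-independent, the associated families coincide" rests on a false premise.

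Second, the two structural facts you flag are not "careful but finite computations using $w_1^2=-1$ and the zero-folded condition" on a single quad. The vanishing of the $\lambda^{\pm2}$ part of the \emph{diagonal} entries (the paper's $\hat d=0$) is equivalent to the diagonal edge $u+w_1$ anticommuting with $\bm k$ after the gauge, i.e.\ to the CMC edge being orthogonal to the normal; this does not follow pointwise from $\Re u=0$ and $w\in\mathbb{S}^2$, and the paper has to derive it from a Sym-formula identity at $t=0$ relating the two layers. Likewise, the reciprocal form $\lambda b+\frac{1}{\lambda b}$ cannot be produced quad by quad: a generic gauge leaves two independent off-diagonal coefficients $\hat b,\hat c$, and one needs a \emph{globally} consistent vertex gauge $\operatorname{diag}(\sqrt{x},\sqrt{\bar x})$ normalizing $\hat b\hat c=1$, whose well-definedness the paper extracts from the $\lambda^2$-coefficient of the compatibility condition $\hat L_2\hat M=\hat M_1\hat L$ --- a cocycle condition across quads, not single-quad algebra. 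Until these two points are supplied, the identification with \eqref{eq:cmclax} is not established.
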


Again, this implies that our associated family agrees with the CMC associated family for this reduction of parallelogram nets.

\begin{proof}
First, consider a frame $\Phi$ defined by the Lax matrices \eqref{eq:cmclax} with $\lambda=e^{it}$. From it, we also get the CMC net $f$ with normal map $n$ and dual $f^*$. The corresponding two-layered net $g$ has the CMC net as diagonals (see Fig. \ref{fig:flipcubes}). The net $g$ has edges $u,v,w\in \Im(\mathbb{H})$, where $w=(-1)^{k+l}n$. We introduce a two-layered frame $\tilde{\Phi}$ by
\begin{align*}
\tilde{\Phi}=\begin{cases}G^{-1}\Phi, & \text{if $k+l$ is even}\\
G\Phi, &  \text{if $k+l$ is odd}\end{cases},\qquad
\tilde{\Phi}_3=\begin{cases}G\Phi, & \text{if $k+l$ is even}\\
G^{-1}\Phi, &  \text{if $k+l$ is odd}\end{cases},
\end{align*}
where $G=\begin{pmatrix}\sqrt\lambda & 0 \\ 0 & \frac{1}{\sqrt\lambda}\end{pmatrix}=\begin{pmatrix}e^{\frac{\ci t}{2}} & 0 \\ 0 & e^{-\frac{\ci t}{2}}\end{pmatrix}$. Now, it is easy to check the equality $g=\tilde{\Phi}^{-1}\tilde{\Phi}'$ and, hence, $\tilde{\Phi}$ is a frame for $g$. At even $k+l$, we can calculate
\begin{align*}
\tilde{L}&=\tilde{\Phi}_1\tilde{\Phi}^{-1}=GLG=\begin{pmatrix}
\lambda a & \lambda b+\frac{1}{\lambda b} \\ -\frac{\bar b}{\lambda}-\frac{\lambda}{\bar b} & \frac{\bar a}{\lambda}
\end{pmatrix}\\
&=\frac{\lambda+\frac{1}{\lambda}}{2}\begin{pmatrix}
a & b+\frac{1}{b} \\ -\bar b-\frac{1}{\bar b} & \bar a
\end{pmatrix}+\frac{\lambda-\frac{1}{\lambda}}{2\ci}\begin{pmatrix}
\ci a & \ci b-\frac{\ci}{b} \\ \ci\bar b-\frac{\ci}{\bar b} & -\ci\bar a
\end{pmatrix}
=\cos(t)\tilde{L}(1)+\sin(t)\tilde{L}(\ci).
\end{align*}
With similar calculations we can show
\begin{align*}
\tilde{L}=\cos(t)\tilde{L}(1)+\sin(t)\tilde{L}(\ci),\quad \tilde{M}=\cos(t)\tilde{M}(1)+\sin(t)\tilde{M}(\ci)
\end{align*}
at all points of both layers. Also, for the third direction, we have
\begin{align*}
\tilde{\Phi}_3\tilde{\Phi}^{-1}=\begin{cases}G^2&=\cos(t)+\sin(t)\bm k,\quad \text{if $k+l$ is even}\\
G^{-2}&=\cos(t)-\sin(t)\bm k,\quad  \text{if $k+l$ is odd}\end{cases}.
\end{align*}
We can normalize the coefficient of $\cos(t)$ of $\tilde{L},\tilde{M}$ with the gauge $\Psi:=\tilde{\Phi}(1)^{-1}\tilde{\Phi}$:
\begin{align*}
U&:=\Psi_1\Psi^{-1}=\cos(t)+\sin(t)\tilde{\Phi}(1)^{-1}_1\tilde{L}(\ci)\tilde{\Phi}(1)=\cos(t)+\sin(t)u,\\
V&:=\Psi_2\Psi^{-1}=\cos(t)+\sin(t)\tilde{\Phi}(1)^{-1}_2\tilde{M}(\ci)\tilde{\Phi}(1)=\cos(t)+\sin(t)v,\\
W&:=\Psi_3\hat\Psi^{-1}=\cos(t)+(-1)^{k+l}\sin(t)\Phi(1)^{-1}\bm k\Phi(1)=\cos(t)+\sin(t)w.
\end{align*}
The last equality for $U$ and $V$ follows from the fact that the Sym formula at $t=0$ applied to $\Psi$ gives $g$ which has $u,v$ as edges, but we can also directly compute the Sym formula for a Lax matrix of the form $\cos(t)+\sin(t)x$ to give $x$ and, thus, $x$ needs to agree with $u$ (or $v$).

Hence, we have shown that $g$ is a two-layered zero-folded parallelogram net and that its Lax representation can be directly obtained from the CMC Lax matrices.

For the converse, consider a two-layered zero-folded parallelogram net $g$, such that $n:=(-1)^{k+l}w\in\mathbb{S}^2$ with a two-layered frame $\Psi$ given by the Lax representation $U,V,W$. The gauge $\tilde{\Phi}:=(\bm k+n)\Psi$ yields
\begin{align*}
\tilde{\Phi}_3\tilde{\Phi}^{-1}&=(\bm k+n)(\cos(t)+(-1)^{k+l}\sin(t)n)(\bm k+n)^{-1}\\
&=\cos(t)+(-1)^{k+l}\sin(t)\bm k=\begin{cases}G^2, & \text{if $k+l$ is even}\\
G^{-2}, &  \text{if $k+l$ is odd}\end{cases},
\end{align*}
with $G$ as above. Also the $\lambda$-dependency of $\tilde{L}$ can be expressed as
\begin{align*}
\tilde{L}=\tilde{\Phi}_1\tilde{\Phi}^{-1}=(\bm k+n_1)(\frac{\lambda+\frac{1}{\lambda}}{2}+\frac{\lambda-\frac{1}{\lambda}}{2\ci}u)(\bm k+n)^{-1}=:\begin{pmatrix}
\lambda\hat a+\frac{1}{\lambda}\hat d & \lambda \hat b+\frac{1}{\lambda}\hat c \\
-\frac{1}{\lambda}\bar{\hat{b}}-\lambda\bar{\hat{c}} & \frac{1}{\lambda}\bar{\hat{a}}+\lambda\bar{\hat{d}}\end{pmatrix}.
\end{align*}
At even $k+l$ the Lax matrix assigned to the diagonal $u+w_1$ is the product
\begin{align*}
G^{-2}\tilde{L}=\begin{pmatrix}
\hat a+\frac{1}{\lambda^2}\hat d &  \hat b+\frac{1}{\lambda^2}\hat c \\
-\bar{\hat{b}}-\lambda^2\bar{\hat{c}} & \bar{\hat{a}}+\lambda^2\bar{\hat{d}}\end{pmatrix}.
\end{align*}
At $t=0$ we have $G=1$ and we can use the Sym formula to compute
\begin{align*}
\tilde{\Phi}_{1}(G^{-2}\tilde{L})'\bm k\tilde{\Phi}&=\tilde{\Phi}_{13}(G^{-2}\tilde{L})'\tilde{\Phi}^{-1}\tilde{\Phi}\bm k\tilde{\Phi}=(u_2+w)w\\
&=w_1(u+w_1)=-\tilde{\Phi}_{1}\bm k\tilde{\Phi}_{1}^{-1}\tilde{\Phi}_{13}(G^{-2}\tilde{L})'\tilde{\Phi}=-\tilde{\Phi}_{1}\bm k(G^{-2}\tilde{L})'\tilde{\Phi}.
\end{align*}
Now, at $t=0$, we have $(G^{-2}\tilde{L})'\bm k=-\bm k(G^{-2}\tilde{L})'$ which implies that $(G^{-2}\tilde{L})'$ is purely off-diagonal, i.e., $\hat{d}=0$.

Now we can introduce the one-layered frame $\hat{\Phi}:=G\tilde{\Phi}$ for even $k+l$ and $\hat{\Phi}:=G^{-1}\tilde{\Phi}$ for odd $k+l$. This yields Lax matrices of the form
\begin{align*}
\hat{L}=\hat{\Phi}_1\hat{\Phi}^{-1}=\begin{pmatrix}
\hat a & \lambda \hat b+\frac{1}{\lambda}\hat c \\
-\frac{1}{\lambda}\bar{\hat{b}}-\lambda\bar{\hat{c}} & \bar{\hat{a}}\end{pmatrix},\quad
\hat{M}=\begin{pmatrix}
\hat d & \lambda \hat e+\frac{1}{\lambda}\hat f \\
-\frac{1}{\lambda}\bar{\hat{e}}-\lambda\bar{\hat{f}} & \bar{\hat{d}}\end{pmatrix}.
\end{align*}
The $\lambda^2$ coefficient of $\hat{L}_2\hat{M}=\hat{M}_1\hat{L}$ yields $\hat b_2\bar{\hat f}=\hat e_1\bar{\hat c}$ and $\bar{\hat c}_2\hat e=\bar{\hat f}_1\hat b$ which implies that a vertex variable $x$ with $x_1=\frac{\bar{x}}{\hat b\hat c},x_2=\frac{\bar{x}}{\hat e\hat f}$ is well-defined. A final gauge $\Phi:=\begin{pmatrix}\sqrt{x} & 0 \\ 0 & \sqrt{\bar{x}}\end{pmatrix}\hat\Phi$ yields
\begin{align*}
L=\Phi_1\Phi^{-1}=\begin{pmatrix}\sqrt{x_1} & 0 \\ 0 & \sqrt{\bar{x}_1}\end{pmatrix}\hat{L}\begin{pmatrix}\frac{1}{\sqrt{x}} & 0 \\ 0 & \frac{1}{\sqrt{\bar{x}}}\end{pmatrix}=\begin{pmatrix}
a & \lambda  b+\frac{1}{\lambda}\frac{1}{b} \\
-\frac{1}{\lambda}\bar{b}-\lambda\frac{1}{\bar{b}} & \bar{a}\end{pmatrix},
\end{align*}
where $a:=\frac{\sqrt{x_1}}{\sqrt{x}}\hat{a}$, $b:=\frac{\sqrt{\hat{b}}}{\sqrt{\hat{c}}}$. The same calculation can be done for $M=\Phi_2\Phi^{-1}$ and $L,M$ have exactly the form \eqref{eq:cmclax}.
\end{proof}

\footnotesize
\bibliographystyle{plain}
\bibliography{references}

\end{document}